\title{Suppression of Chemotactic Blowup by Strong Buoyancy in Stokes-Boussinesq Flow with Cold Boundary}
\author{Zhongtian Hu\thanks{
                  Department of Mathematics, Duke University, email: zhongtian.hu@duke.edu}
        \and
        Alexander Kiselev\thanks{
                  Department of Mathematics, Duke University, email: kiselev@math.duke.edu
}
        }
\numberwithin{equation}{section}
\newtheorem{thm}{Theorem}[section]
\newtheorem{lem}{Lemma}[section]
\newtheorem{cor}{Corollary}[section]
\newtheorem{prop}{Proposition}[section]
\newtheorem*{prop*}{Proposition}
\newtheorem{rmk}{Remark}[section]
\newtheorem{defn}{Definition}[section]
\newtheorem{aprop}{Proposition}[section]
\newtheorem{athm}{Theorem}[section]
\newcommand{\bu}{\bar{u}}
\newcommand{\Q}{\mathbb{Q}}
\newcommand{\bP}{\mathbb{P}}
\newcommand{\calA}{\mathcal{A}}
\newcommand{\R}{\mathbb{R}}
\newcommand{\N}{\mathbb{N}}
\newcommand{\T}{\mathbb{T}}
\newcommand{\rhon}{\rho^{(n)}}
\newcommand{\un}{u^{(n)}}
\DeclareMathOperator{\divv}{div}
\DeclareMathOperator{\supp}{supp}
\DeclareMathOperator{\diam}{diam}
\newcommand{\srho}[1]{\p_t^{#1}\rho}
\newcommand{\su}[1]{\p_t^{#1}u}
\newcommand{\p}{\partial}
\begin{document}
\newpage
\maketitle
\begin{abstract}
In this paper, we show that the Keller-Segel equation equipped with zero Dirichlet Boundary condition and actively coupled to a Stokes-Boussinesq flow is globally well-posed provided that the coupling is sufficiently large. We will in fact show that the dynamics is quenched after certain time. In particular,
such active coupling is blowup-suppressing in the sense that it enforces global regularity for some initial data leading to a finite-time singularity when the flow is absent.
\end{abstract}

\section{Introduction}

The Keller-Segel equation is a well known model of chemotaxis \cite{KS,Patlak}.
It describes a population of bacteria or slime mold
that move in response to attractive external chemical that they themselves secrete.
The equation has interesting analytical properties: its solutions can form mass concentration singularities
in dimension greater than one (see e.g. \cite{Pert}) where further references can be found).
Often, chemotactic processes take place in ambient fluid. One natural question is then how the presence of fluid flow can affect singularity formation.
In the case where the ambient flow is passive - prescribed and independent of the bacteria density - it has been shown that presence of the flow
can suppress singularity formation. The flows that have been analyzed include some flows with strong mixing properties \cite{KX},
shear flows \cite{BH}, hyperbolic splitting flow \cite{HT}, and some cellular flows \cite{GXZ}. In a similar vein, \cite{CDFM} explored advection induced regularity has been
for the Kuramoto-Sivashinsky equation.

The case where the fluid flow is active - satisfies some fluid equation driven by force exerted the bacteria - is very interesting but harder to analyze.
There have been many impressive works that analyzed such coupled systems, usually via buoyancy force;
see for example \cite{di2010chemotaxis,duan2010global,lorz2010coupled,
liu2011coupled,lorz2012coupled,Winkler2012, CKL, DX, TW, Winkler2021} where further references can be found.
in some cases results involving global existence of regular solutions
 (the precise notion of their regularity is different in different papers) have been proved.
These results, however, apply either in the settings
where the initial data satisfy some smallness assumptions (e.g. \cite{duan2010global, lorz2012coupled, CKL})
or in the systems where both fluid and chemotaxis equations may not form a singularity if not coupled (e.g. \cite{Winkler2012,  TW, Winkler2021}).
Recently, in \cite{Hact} and \cite{zzz}, the authors analyzed Patlak-Keller-Segel equation coupled to the Navier-Stokes equation near Couette flow.
Based on ideas of blowup suppression in shear flows and stability of the Couette flow,
the authors proved that global regularity can be enforced if the amplitude of the Couette flow is dominantly large and if the initial flow is very close to it.
The density/fluid coupling in these works is not by buoyancy force but instead involves a model of the swimmer's effect on fluid that leads to special algebraic properties of the system.

In the recent work of the authors joint with Yao \cite{HKY}, the two dimensional Keller-Segel equation coupled with the incompressible porous media via buoyancy force has been analyzed.
It has been proved that in this case, an arbitrary weak coupling constant (i.e, gravity) completely regularizes the system, and the solutions
become globally regular for any reasonable initial data. At the heart of the proof is the analysis of potential energy, whose time derivative includes a coercive "main term" $\|\partial_{x_1}\rho\|_{H^{-1}_0}^2$
(where $\rho$ is the bacteria density). Essentially, this $H^{-1}_0$ norm has to become small, and intuitively this implies mixing in the $x_1$ direction.
Hence the solution becomes in some sense quasi-one-dimensional and this arrests singularity formation.

Our goal in this paper is to analyze the Keller-Segel equation in an arbitrary smooth domain in dimensions two and three coupled to the Stokes flow via buoyancy force:
\begin{equation}
    \label{eq:ksstokes}
    \begin{cases}
    \p_t \rho + u\cdot \nabla \rho -\Delta \rho + \divv(\rho\nabla(-\Delta)^{-1}\rho) = 0,& x \in \Omega,\\
    \p_tu-\Delta u + \nabla p = g\rho  e_z,\; \divv u = 0,& x\in \Omega,\\
    u(0,x) = u_0(x),\; \rho(0,x) = \rho_0(x),\; \rho_0(x) \ge 0,\\
    u|_{\p \Omega} = 0,\; \rho|_{\p \Omega} = 0.
    \end{cases}
\end{equation}
Here $\Omega$ is a smooth, compact domain in $\R^d$, $d= 2$ or $3$. $e_z$ denotes the unit vector $(0,1)$ when $d=2$ or $(0,0,1)$ when $d=3$. $g \in \R^+$ is the Rayleigh number representing the buoyancy strength. Moreover, the operator $(-\Delta)^{-1}$ denotes the inverse homogeneous Dirichlet Laplacian corresponding to the domain $\Omega$. In the case of the Stokes flow, the fluid velocity is more regular, and the equation includes time derivative that complicates matters, partly due to a loss of a ``Biot-Savart law'' that relates $\rho$ and $u$ directly. We are unable to prove global regularity for all $g,$ and we are not sure if it is true.
Our main result is global regularity for strong buoyancy. The proof is completely different from \cite{HKY}: it relies on softer arguments and the analysis of the large buoyancy limit.

The first part of this paper addresses the local well-posedness of regular solutions to \eqref{eq:ksstokes}. Before we make precise of the notion of a \textit{regular solution}, we shall first introduce the following useful function spaces: to study the regularity properties of $\rho$, we consider
\begin{align*}
H^1_0 &:= \text{completion of }C^\infty_c(\Omega) \text{ with respect to }H^1 \text{ norm},\\
H^{-1}_0 &:= \text{dual space of }H^1_0.
\end{align*}
Moreover, we use the traditional notation $W^{k,p}(\Omega)$ to denote Sobolev spaces equipped with norm $\|\cdot\|_{k,p}$ in domain $\Omega$. If $p = 2$, we in particular write $H^s(\Omega) = W^{s,2}(\Omega)$ equipped with norm $\|\cdot\|_s$. We will write $W^{k,p}$ (or $H^s$) instead of $W^{k,p}(\Omega)$ (or $H^s(\Omega)$) for simplicity if there is no confusion over the domain involved. We also say an $n$-vector field $v = (v_i)_i \in H^s$ if $v_i \in H^s$ for $i = 1,\hdots, n$.

As we also need to work with Stokes equation, it is standard to introduce the following spaces:
$$
C^\infty_{c,\sigma} := \{u \in C_c^\infty(\Omega)\;|\; \divv u = 0\},
$$
$$
H := \text{completion of }C^\infty_{c,\sigma} \text{ with respect to }L^2 \text{ norm},
$$
$$
V := H \cap H^1_0(\Omega),\; V^*:= \text{dual space of }V,
$$
where $V$ is equipped with $H^1_0$ norm, and $V^*$ is equipped with the standard dual norm. We also recall the following useful operators: the Leray projector $\bP: L^2 \to H$ and the Stokes operator $\calA := - \bP\Delta: D(\calA) = H^2\cap V \to H$. We refer the readers to \cite{CF} for a more thorough treatment of such operators. As a common practice in the study of Stokes equation, one may equivalently rewrite the fluid equation as:
\begin{equation}
    \label{eq:stokes2}
    \p_t u + \calA u = g\bP(\rho e_z),
\end{equation}
We will often use this formulation in regularity estimates for the rest of this work.

Now we give a rigorous definition of a \textit{regular solution} to \eqref{eq:ksstokes}.
\begin{defn}
Given initial data $\rho_0 \in H^1_0$, $u_0 \in V$, and some $T > 0$, we say the pair $(\rho(t,x), u(t,x))$ is a regular solution to \eqref{eq:ksstokes} on $[0,T]$ if
\begin{align*}
\rho \in C^0([0,T]; H^1_0) \cap L^2((0,T); H^2\cap H^1_0),&\; u \in C^0([0,T]; V) \cap L^2((0,T); H^2\cap V),\\
\p_t \rho \in C^0([0,T];H^{-1}_0),&\; \p_t u \in C^0([0,T]; V^*),\\
\rho \in C^\infty((0,T] \times \Omega),&\; u \in C^\infty((0,T] \times \Omega).
\end{align*}
\end{defn}

With this definition, we are able to obtain the following well-posedness result:
\begin{thm}
\label{thm:lwp}
Given initial data $\rho_0 \in H^1_0$, $u_0 \in V$, there exists a $T_* = T_*(\rho_0) > 0$ such that there exists a unique regular solution $(\rho, u)$ to problem \eqref{eq:ksstokes} on $[0,T_*]$.
\end{thm}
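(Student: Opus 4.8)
The plan is to view \eqref{eq:ksstokes} as a semilinear parabolic system --- $\rho$ obeys a heat equation with a transport term and quadratic nonlinearities, while $u$ obeys the inhomogeneous Stokes equation \eqref{eq:stokes2} whose forcing is \emph{linear} in $\rho$ --- and to run a three-part argument: first produce a solution on a short interval by a contraction mapping based on maximal $L^2$-regularity for the Dirichlet Laplacian and for the Stokes operator $\calA$; then upgrade it to a smooth solution for positive times by parabolic bootstrapping; and finally obtain the dependence $T_*=T_*(\rho_0)$ from a continuation criterion combined with an a priori bound on $\|\rho\|_{L^2}$ that, crucially, does not see the velocity. Throughout $d\le 3$, so $H^2\hookrightarrow L^\infty$ and $H^1\hookrightarrow L^4\cap L^6$; we write $c:=(-\Delta)^{-1}\rho$ (and $\tilde c$ for $\tilde\rho$), and record that elliptic regularity gives $\|c\|_{H^3}\lesssim\|\rho\|_{H^1}$, so $\nabla c\in H^2\hookrightarrow L^\infty$, and that $\divv(\rho\nabla c)=\nabla\rho\cdot\nabla c-\rho^2$.

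\emph{Step 1: local existence and uniqueness.} Fix $R>0$ and small $\tau>0$ and let $B_{R,\tau}$ be the ball of radius $R$ in $\big(C([0,\tau];H^1_0)\cap L^2((0,\tau);H^2\cap H^1_0)\big)\times\big(C([0,\tau];V)\cap L^2((0,\tau);H^2\cap V)\big)$, which is complete under the weaker ``energy'' metric of $\big(C([0,\tau];L^2)\cap L^2((0,\tau);H^1)\big)^2$ (the strong bounds pass to energy-metric limits by weak lower semicontinuity). Given $(\tilde\rho,\tilde u)\in B_{R,\tau}$, define $(\rho,u)=\mathcal{T}(\tilde\rho,\tilde u)$ by solving the two \emph{linear} problems $\p_t u+\calA u=g\bP(\tilde\rho e_z)$, $u(0)=u_0$, and $\p_t\rho-\Delta\rho=-\divv(\tilde\rho\tilde u)-\divv(\tilde\rho\nabla\tilde c)=:F$, $\rho(0)=\rho_0$, $\rho|_{\p\Omega}=0$. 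Since $\tilde u$ is divergence free, $F=-\tilde u\cdot\nabla\tilde\rho-\nabla\tilde\rho\cdot\nabla\tilde c+\tilde\rho^2$, and the key point is that $F\in L^2((0,\tau);L^2)$ with a positive power of $\tau$ to spare: $\|\tilde\rho^2\|_{L^2_tL^2}\le\tau^{1/2}\|\tilde\rho\|_{L^\infty_tL^4}^2$; $\|\nabla\tilde\rho\cdot\nabla\tilde c\|_{L^2_tL^2}\le\tau^{1/2}\|\nabla\tilde c\|_{L^\infty_{t,x}}\|\nabla\tilde\rho\|_{L^\infty_tL^2}$ with $\|\nabla\tilde c\|_{L^\infty_{t,x}}\lesssim\|\tilde\rho\|_{L^\infty_tH^1}$; and $\|\tilde u\cdot\nabla\tilde\rho\|_{L^2_tL^2}\le\|\tilde u\|_{L^p_tL^\infty}\|\nabla\tilde\rho\|_{L^q_tL^2}$ where $\tfrac1p+\tfrac1q=\tfrac12$ and, interpolating $\tilde u\in L^\infty_tH^1\cap L^2_tH^2$, one has $\tilde u\in L^p_tH^s\hookrightarrow L^p_tL^\infty$ for some $p>2$, $s>d/2$. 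Hence $\|F\|_{L^2_tL^2}\le C\tau^{\kappa}R^2$ for some $\kappa>0$; maximal regularity then gives $\|\rho\|_{C_tH^1_0}+\|\rho\|_{L^2_tH^2}\le C\big(\|\rho_0\|_{H^1_0}+\tau^{\kappa}R^2\big)$ and $\|u\|_{C_tV}+\|u\|_{L^2_tH^2}\le C\big(\|u_0\|_V+g\|\tilde\rho\|_{L^2_tL^2}\big)\le C\big(\|u_0\|_V+g\tau^{1/2}R\big)$, with $u$ still divergence free. Choosing $R\sim\|\rho_0\|_{H^1_0}+\|u_0\|_V+1$ and then $\tau$ small (depending on $R$ and $g$), $\mathcal{T}$ maps $B_{R,\tau}$ into itself; the same bilinear estimates, applied to the difference $\mathcal{T}(\tilde\rho_1,\tilde u_1)-\mathcal{T}(\tilde\rho_2,\tilde u_2)$, give a contraction in the energy metric (Lipschitz constant $\lesssim\tau^{\kappa'}R$) after possibly shrinking $\tau$ further. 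The fixed point $(\rho,u)$ is a regular solution of \eqref{eq:ksstokes} on $[0,\tau]$ minus the smoothness claim; reading $\p_t\rho$, $\p_tu$ off the equations yields $\p_t\rho\in C([0,\tau];H^{-1}_0)$, $\p_tu\in C([0,\tau];V^*)$. Uniqueness among all regular solutions follows because any two with the same data lie, for short time, in a common such ball by continuity, hence coincide there, and the set of coincidence times is both open and closed in the interval of common existence.

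\emph{Step 2: the life span depends only on $\rho_0$, and $\rho\ge 0$.} Let $[0,\tau_{\max})$ be the maximal interval of existence. The construction shows the solution persists as long as $\|\rho(t)\|_{L^2}$ stays finite --- one bootstraps exactly as for the flow-free Keller--Segel equation, the velocity being slaved to $\rho$ through $\|u\|_{C_tV}+\|u\|_{L^2_tH^2}\lesssim\|u_0\|_V+\|\rho\|_{L^2_tL^2}$. Now test the $\rho$-equation against $\rho$: the transport term drops ($\divv u=0$ and $u|_{\p\Omega}=0$), and, using $-\Delta c=\rho$ with $\rho|_{\p\Omega}=0$,
\[
\tfrac12\tfrac{d}{dt}\|\rho\|_{L^2}^2+\|\nabla\rho\|_{L^2}^2=\tfrac12\int_\Omega\rho^3\,dx ,
\]
an identity \emph{insensitive to the velocity}. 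By Gagliardo--Nirenberg ($\|\rho\|_{L^3}^3\lesssim\|\rho\|_{L^2}^2\|\nabla\rho\|_{L^2}$ if $d=2$, $\lesssim\|\rho\|_{L^2}^{3/2}\|\nabla\rho\|_{L^2}^{3/2}$ if $d=3$) and Young's inequality, $y:=\|\rho\|_{L^2}^2$ satisfies $y'\le Cy^{(d+2)/d}$, so $y$ stays finite on $[0,T_1)$ with $T_1=T_1(\|\rho_0\|_{L^2})>0$. Hence $\tau_{\max}\ge T_1$ and we may take any $T_*<T_1$, giving $T_*=T_*(\rho_0)$ as asserted. Finally, writing the $\rho$-equation as $\p_t\rho+b\cdot\nabla\rho-\Delta\rho=\rho\cdot\rho$ with $b:=u+\nabla c$ and reading it as a linear parabolic equation for $\rho$ with coefficients bounded on $[0,T_*]$ (by the regularity just established), the weak maximum principle together with the vanishing initial and boundary data forces $\rho\ge 0$ throughout.

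\emph{Step 3: smoothness for $t>0$; the main obstacle.} On $[0,T_*]$ we have a strong $H^1$-solution with $\rho,u\in L^2((0,T_*);H^2)$, which we bootstrap on $(0,T_*]$: fixing $0<t_0\le T_*$ and a time $t_1\in(0,t_0)$ with $\rho(t_1),u(t_1)\in H^2$, we treat $\p_t\rho-\Delta\rho=-\divv(\rho u+\rho\nabla(-\Delta)^{-1}\rho)$ and the Stokes system as linear equations whose right-hand sides --- by Sobolev product estimates and the two-derivative gain of $(-\Delta)^{-1}$ --- improve by one order of regularity each time the current bounds are fed in; interior and up-to-the-boundary parabolic and Stokes regularity on the smooth domain $\Omega$ (alternating $L^p$ and Schauder estimates, away from $t=0$) then upgrade $(\rho,u)$ one derivative at a time, so that $\rho,u\in C^\infty((0,T_*]\times\overline\Omega)$; no temporal compatibility conditions are needed since smoothness is asserted only for $t>0$. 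For local well-posedness the Stokes coupling presents no essential difficulty --- in contrast with the global analysis, the time derivative in the fluid equation and the absence of a Biot--Savart law are absorbed by maximal regularity for $\calA$ --- so the genuinely delicate points are (a) extracting enough time-integrability from $u\in L^2_tH^2$, namely $u\in L^p_tL^\infty$ for some $p>2$, to place $u\cdot\nabla\rho$ in $L^2_tL^2$ and close the fixed point with a positive power of $\tau$, and (b) the observation of Step 2 that the $L^2$-energy identity for $\rho$ decouples from $u$, which is exactly what makes $T_*$ depend on $\rho_0$ alone; the $C^\infty$-bootstrap, though the most laborious part, is routine.
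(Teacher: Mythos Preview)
Your argument is correct in outline but takes a genuinely different route from the paper. The paper constructs the solution by Galerkin approximation (projecting onto eigenfunctions of $-\Delta$ and of $\calA$), proves uniform $L^\infty_tH^1\cap L^2_tH^2$ a~priori bounds on a time $T_*$ chosen via exactly the $L^2$ identity you wrote down, and passes to the limit by compactness; uniqueness is then a separate $L^2$ energy estimate for the difference. Your contraction mapping with maximal regularity is an equally standard alternative and yields the same objects. The more substantive divergence is in the smoothness step: the paper singles out as the main obstruction that testing with $(-\Delta)^s\rho$ for $s\ge 2$ produces uncontrolled boundary terms (since $\Delta\rho$ need not vanish on $\partial\Omega$), and therefore runs an elaborate induction by commuting \emph{time} derivatives $\partial_t^k$ through the system---these preserve the Dirichlet condition---and then recovers spatial regularity via elliptic estimates. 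You sidestep this entirely by invoking linear parabolic/Stokes $L^p$ and Schauder theory as a black box, which is legitimate and arguably shorter, but trades the paper's explicit, quantitative $t^{-k}$ bounds (used nowhere else, admittedly) for heavier off-the-shelf machinery. Two small points: your exponent $y'\le Cy^{(d+2)/d}$ is wrong when $d=3$ (the Young step gives $y'\le Cy^3$, as the paper records), though this is harmless for the conclusion; and your continuation claim ``the solution persists as long as $\|\rho\|_{L^2}$ stays finite'' is really the paper's Theorem~\ref{thm:L2criterion}, whose proof requires exactly the $H^1$ energy estimate you gesture at---you should acknowledge that this bootstrap (bounded $\|\rho\|_{L^2}\Rightarrow$ bounded $\|\rho\|_{H^1},\|u\|_V$) is doing real work.
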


We will then prove a regularity criterion which allows us to continue the regular solution of \eqref{eq:ksstokes} as long as the $L^{\frac{4}{4-d}}_tL^2_x$ norm of $\rho$ is controlled. More precisely, we have
\begin{thm}
\label{thm:L2criterion}
Let $\Omega \subset \R^d$, $d = 2,3$, be a smooth, bounded domain. If the maximal lifespan $T_0$ of the regular solution $(\rho, u)$ to problem \eqref{eq:ksstokes} is finite, then necessarily
$$
\lim_{t \nearrow T_0}\int_0^t\|\rho\|_{L^2}^{\frac{4}{4-d}}ds = \infty.
$$
\end{thm}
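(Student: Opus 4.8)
The plan is a contrapositive/continuation argument: assuming $\int_0^{T_0}\|\rho\|_{L^2}^{4/(4-d)}\,ds<\infty$ while $T_0<\infty$, I derive uniform-in-time bounds on $\|\rho(t)\|_{H^1_0}$ and $\|u(t)\|_{V}$ on $[0,T_0)$, so that the local well-posedness theorem can be reapplied at a time close to $T_0$ to extend the solution past $T_0$, contradicting maximality. The main work is a hierarchy of energy estimates, each closed via the assumed integrability of $\|\rho\|_{L^2}$.

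First I would record the basic \emph{a priori} facts that do not need the assumption: $\rho\ge 0$ is preserved, $\|\rho(t)\|_{L^1}=\|\rho_0\|_{L^1}$ is conserved (integrate the $\rho$-equation, using the Dirichlet condition and $\int \divv(\rho\nabla(-\Delta)^{-1}\rho)=0$), and the standard $L^2$ energy identity
\begin{equation*}
\frac12\frac{d}{dt}\|\rho\|_{L^2}^2+\|\nabla\rho\|_{L^2}^2=\int_\Omega \rho\,\nabla\rho\cdot\nabla(-\Delta)^{-1}\rho\,dx=-\frac12\int_\Omega \rho^2\,\Delta(-\Delta)^{-1}\rho\,dx=\frac12\int_\Omega\rho^3\,dx,
\end{equation*}
where the advection term drops by incompressibility. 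Then I would upgrade to the $L^p$ scale for $p$ large enough (Gagliardo–Nirenberg plus the assumed control of $\|\rho\|_{L^2}$; the exponent $4/(4-d)$ is exactly the scaling-critical power that makes the cubic term absorbable once $\|\rho\|_{L^2}\in L^{4/(4-d)}_t$), obtaining $\rho\in L^\infty_t L^p_x\cap L^2_t W^{1,p}_x$-type bounds, hence in particular $\rho\in L^{q}_{t,x}$ for some $q>d/2+1$, which is the natural subcritical space. From $\rho\in L^q_{t,x}$ and the Stokes equation \eqref{eq:stokes2}, maximal regularity for the Stokes operator $\calA$ gives $u\in L^q_t W^{2,q}_x$ and $\p_t u\in L^q_{t,x}$, so $\nabla u\in L^q_t L^\infty_x$ in $d=2$ (and a correspondingly good space in $d=3$); the drift is then benign.

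Next comes the $H^1_0$ estimate for $\rho$. Testing the $\rho$-equation against $-\Delta\rho$ gives
\begin{equation*}
\frac12\frac{d}{dt}\|\nabla\rho\|_{L^2}^2+\|\Delta\rho\|_{L^2}^2=\int_\Omega (u\cdot\nabla\rho)\,\Delta\rho\,dx+\int_\Omega \divv(\rho\nabla(-\Delta)^{-1}\rho)\,\Delta\rho\,dx.
\end{equation*}
The chemotactic term is estimated by expanding $\divv(\rho\nabla(-\Delta)^{-1}\rho)=\nabla\rho\cdot\nabla(-\Delta)^{-1}\rho-\rho^2$ and using elliptic regularity $\|\nabla(-\Delta)^{-1}\rho\|_{W^{1,r}}\lesssim\|\rho\|_{L^r}$ together with Gagliardo–Nirenberg interpolation, producing terms bounded by $\epsilon\|\Delta\rho\|_{L^2}^2+C(1+\|\rho\|_{L^2}^{\alpha})\|\nabla\rho\|_{L^2}^2$ for a suitable $\alpha$ tied to $d$; the advection term is handled with the $L^q_tL^\infty_x$ bound on $\nabla u$ (or by integrating by parts and using $\divv u=0$). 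Grönwall, with the integrating factor controlled because $\|\rho\|_{L^2}^\alpha\in L^1_t$ (this is where the precise exponent $4/(4-d)$ is used again), yields $\rho\in L^\infty_t H^1_0\cap L^2_t H^2$ on $[0,T_0)$. Feeding this improved $\rho$ back into Stokes maximal regularity upgrades $u$ to $L^\infty_t V\cap L^2_t(H^2\cap V)$ with $\p_t u\in L^2_t H$. Finally, these are precisely the quantities whose finiteness the local existence theorem requires as initial data, so one picks $t_0<T_0$ with $T_0-t_0<T_*(\rho(t_0))$ and restarts, contradicting that $T_0$ is maximal.

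The step I expect to be the genuine obstacle is the coupling in the $H^1_0$ estimate for $\rho$ in the three-dimensional case: there the cubic/chemotactic nonlinearity is scaling-critical relative to $\|\Delta\rho\|_{L^2}$, so one cannot afford any slack, and the bootstrap must be organized so that every ``bad'' factor is either a power of $\|\rho\|_{L^2}$ that is integrable in time with the given exponent or is absorbed into the dissipation $\|\Delta\rho\|_{L^2}^2$; getting the Gagliardo–Nirenberg exponents to close simultaneously for $d=2$ and $d=3$, while also ensuring the Stokes-side regularity of $u$ is strong enough to make the transport term subordinate, is the delicate bookkeeping. The Stokes maximal-regularity input and the reapplication of Theorem \ref{thm:lwp} are comparatively routine.
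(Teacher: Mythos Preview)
Your overall strategy---contrapositive, close an $H^1_0$ estimate on $\rho$ via Gr\"onwall, then restart---matches the paper's, but you detour through an $L^p$ bootstrap on $\rho$ and Stokes maximal regularity that the paper avoids entirely. The paper's route is more direct: test the Stokes equation by $\calA u$ to get
\[
\frac{d}{dt}\|\nabla u\|_{L^2}^2 + \|\calA u\|_{L^2}^2 \le g^2\|\rho\|_{L^2}^2,
\]
so that $\int_0^{T_0}\|u\|_2^2\,ds$ is bounded directly by the assumed integrability of $\|\rho\|_{L^2}^2$ (which follows from the $L^{4/(4-d)}_t$ bound and $T_0<\infty$). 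This already yields $u\in L^2_tH^2\hookrightarrow L^2_tL^\infty$, which is all one needs for the transport term in the $-\Delta\rho$ test: $\int_\Omega(u\cdot\nabla\rho)\Delta\rho\le \epsilon\|\Delta\rho\|_{L^2}^2+C\|u\|_2^2\|\nabla\rho\|_{L^2}^2$. The chemotactic terms produce a Gr\"onwall factor of the form $\|\rho\|_{L^2}^{4/(4-d)}\|\nabla\rho\|_{L^2}^2$ after Gagliardo--Nirenberg (in $d=2$ the paper additionally uses the $L^1$ bound on $\rho$ to reach this), and the loop closes. No $L^p$ iteration, no $L^q$ maximal regularity for Stokes, and no $L^\infty$ control on $\nabla u$ are required; your route would likely work but is considerably heavier, and the step ``upgrade to the $L^p$ scale \dots\ hence $\rho\in L^q_{t,x}$ for some $q>d/2+1$'' is left vague precisely where the exponents need to be pinned down. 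Note also that the parenthetical ``or by integrating by parts and using $\divv u=0$'' does not dispose of the transport term when testing against $-\Delta\rho$: that cancellation only occurs when testing against $\rho$ itself.

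One small correction: under homogeneous Dirichlet conditions $\|\rho\|_{L^1}$ is not conserved but only nonincreasing, since $\int_\Omega\Delta\rho=\int_{\partial\Omega}\partial_n\rho\le 0$ by the maximum principle. You only need the upper bound $\|\rho(t)\|_{L^1}\le\|\rho_0\|_{L^1}$, so this does not damage your argument.
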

A similar result was proved in \cite{KX} in the periodic setting for the uncoupled Keller-Segel equation.

In the second part of this work, we will quantify the quenching effect of the Stokes-Boussinesq flow with strong buoyancy on the Keller-Segel equation equipped with homogeneous Dirichlet boundary condition. To be more precise, we show that the flow can suppress the norm $\|\rho\|_{L^2}$ to be sufficiently small within the time scale of local existence. In particular, we will show the following main result of this work:
\begin{thm}
    \label{thm:main}
    For any smooth, bounded domain $\Omega \subset \R^d$, $d = 2,3$, and arbitrary initial data $\rho_0 \in H^1_0$, $u_0 \in V$, there exists $g_* = g_*(\rho_0, u_0)$ so that for any $g \ge g_*$, \eqref{eq:ksstokes} admits a regular, global-in-time solution. In particular, $\rho$ is quenched exponentially fast in the sense that
    \begin{equation}
    \label{eq:quenching}
    \lim_{t \to \infty} e^{ct}\|\rho(t)\|_{L^2} \le C,
    \end{equation}
    where $c$, $C$ are positive constants that only depend on the domain $\Omega$.
\end{thm}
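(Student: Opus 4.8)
\emph{Proof strategy.} The plan is to reduce the theorem, via the continuation criterion of Theorem~\ref{thm:L2criterion}, to the single assertion that strong buoyancy forces $\|\rho(t)\|_{L^2}$ below a threshold $\epsilon_0=\epsilon_0(\Omega)$ at some time $t_1$ in the interval of guaranteed local existence; the cold boundary condition then supplies both global regularity and the quenching \eqref{eq:quenching}. Indeed, multiplying the $\rho$-equation by $\rho$ and using $\divv u=0$, $u|_{\p\Omega}=0$, $\rho|_{\p\Omega}=0$ together with $-\Delta(-\Delta)^{-1}\rho=\rho$ yields
\begin{equation}\label{eq:L2prop}
\tfrac12\tfrac{d}{dt}\|\rho\|_{L^2}^2+\|\nabla\rho\|_{L^2}^2=\tfrac12\int_\Omega\rho^3\,dx;
\end{equation}
by the Gagliardo--Nirenberg inequality on $H^1_0$ and Young's inequality, $\tfrac12\int\rho^3\le\tfrac12\|\nabla\rho\|_{L^2}^2+C\|\rho\|_{L^2}^{q}$ with $q=4$ if $d=2$ and $q=6$ if $d=3$, and since $\rho$ vanishes on $\p\Omega$ the Poincar\'e inequality $\|\nabla\rho\|_{L^2}^2\ge\lambda_1\|\rho\|_{L^2}^2$ gives $\tfrac{d}{dt}\|\rho\|_{L^2}^2\le(-\lambda_1+C\|\rho\|_{L^2}^{q-2})\|\rho\|_{L^2}^2$. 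Thus below $\epsilon_0:=(\lambda_1/2C)^{2/(q-2)}$ the density is quenched, $\tfrac{d}{dt}\|\rho\|_{L^2}^2\le-\tfrac{\lambda_1}{2}\|\rho\|_{L^2}^2$, and $\|\rho\|_{L^2}$ stays below the threshold for $t\ge t_1$; consequently $\int_0^\infty\|\rho\|_{L^2}^{4/(4-d)}\,ds<\infty$, so Theorem~\ref{thm:L2criterion} yields $T_0=\infty$, and \eqref{eq:quenching} follows once $g_*$ is taken large enough that $\|\rho(t_1)\|_{L^2}$ is small enough to absorb the waiting time $t_1$ into the constant.

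The buoyancy is needed only to reach the threshold, and I would set this up with two soft observations on $[0,T_*]$, $T_*=T_*(\rho_0)$. First, \eqref{eq:L2prop} does not see the velocity at all, so the scalar ODE it produces keeps $\|\rho(t)\|_{L^2}\le\sqrt2\|\rho_0\|_{L^2}$ and $\int_0^{T_*}\|\nabla\rho\|_{L^2}^2\,dt$ bounded, both uniformly in $g$ and $u_0$; also $\|\rho(t)\|_{L^1}$ is nonincreasing (from $\rho\ge0$, $\rho|_{\p\Omega}=0$ and $\tfrac{d}{dt}\int_\Omega\rho=\int_{\p\Omega}\p_n\rho\le0$), so the potential energy $\int_\Omega z\rho$ stays bounded by $\max_\Omega|z|\,\|\rho_0\|_{L^1}$. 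Second, testing \eqref{eq:stokes2} with $u$ gives $\tfrac12\tfrac{d}{dt}\|u\|_{L^2}^2+\|\nabla u\|_{L^2}^2=g\int_\Omega\rho\,u_z$, while testing the $\rho$-equation with the height $z$ gives $\tfrac{d}{dt}\int_\Omega z\rho=\int_\Omega\rho\,u_z+\int_\Omega\rho\,\p_z c$ with $c=(-\Delta)^{-1}\rho$, where an integration by parts identifies $\int_\Omega\rho\,\p_z c=-\tfrac12\int_{\p\Omega}(\p_n c)^2 n_z\,dS=O(\|\rho\|_{L^2}^2)$ by trace and elliptic estimates. Eliminating $\int\rho u_z$,
$$
\tfrac{d}{dt}\Big(\tfrac1{2g}\|u\|_{L^2}^2-\int_\Omega z\rho\Big)+\tfrac1g\|\nabla u\|_{L^2}^2=O(\|\rho\|_{L^2}^2),
$$
and integrating over $[0,T_*]$ with the bounds above gives $\int_0^{T_*}\|\nabla u\|_{L^2}^2\,dt\le C_0(\rho_0,\Omega)\,g+\tfrac12\|u_0\|_{L^2}^2$; equivalently $g^{-1/2}u$ is bounded in $L^2((0,T_*);V)$ uniformly in $g$. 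So on the fixed interval $[0,T_*]$ the buoyancy produces a flow of intensity of order $g$.

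The remaining and genuinely hard step is that this strong flow actually drives $\|\rho(t_1)\|_{L^2}<\sqrt{\epsilon_0}$ at some $t_1\in[0,T_*]$; this is where the analysis of the large-buoyancy limit enters, and I expect it to be the main obstacle. Arguing by contradiction, suppose $\|\rho^{(n)}(t)\|_{L^2}^2\ge\epsilon_0$ on $[0,T_*]$ for a sequence $g_n\to\infty$. Rescaling time by $\tau=g_n t$, i.e. $\bar\rho^{(n)}(\tau)=\rho^{(n)}(\tau/g_n)$ and $\bar u^{(n)}(\tau)=g_n^{-1}u^{(n)}(\tau/g_n)$ on $[0,g_nT_*]\nearrow[0,\infty)$, the bounds of the previous paragraph make $\{\bar u^{(n)}\}$ bounded in $L^2_{loc}((0,\infty);V)$ with $\p_\tau\bar u^{(n)}\to0$ in $L^2_{loc}((0,\infty);V^*)$, while $\bar\rho^{(n)}$ solves $\p_\tau\bar\rho^{(n)}+\bar u^{(n)}\cdot\nabla\bar\rho^{(n)}=g_n^{-1}(\Delta\bar\rho^{(n)}-\divv(\bar\rho^{(n)}\nabla\bar c^{(n)}))$. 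Passing to the limit --- a singular limit in which the Stokes evolution degenerates to the stationary relation --- produces the transport--Stokes system $\p_\tau\bar\rho+\bar u\cdot\nabla\bar\rho=0$, $\bar u=\calA^{-1}\bP(\bar\rho e_z)$, $\bar\rho(0)=\rho_0$, for which the potential energy is monotone, $\tfrac{d}{d\tau}\int_\Omega z\bar\rho=\langle\bar u,\calA\bar u\rangle=\|\nabla\bar u\|_{L^2}^2\ge0$ and bounded above, so $\bar u(\tau)\to0$ and $\bar\rho(\tau)$ relaxes to a $z$-stratified profile. The subtlety --- which is the core of the theorem --- is that this relaxation is accompanied by a strong enhancement of diffusive dissipation: the cumulative stirring $\int_0^{g_nT_*}\|\nabla\bar u^{(n)}\|_{L^2}^2\,d\tau=g_n^{-1}\int_0^{T_*}\|\nabla u^{(n)}\|_{L^2}^2\,dt$ is $O(1)$ and does not vanish, and, acting in concert with the cold boundary condition --- through which mass leaks out of a density pushed against $\p\Omega$ by buoyancy --- it forces $\|\bar\rho^{(n)}(g_nT_*)\|_{L^2}=\|\rho^{(n)}(T_*)\|_{L^2}\to0$, contradicting the assumption. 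Making this quantitative --- equivalently, when $d=2$, showing that the strong flow carries $\|\rho(t_1)\|_{L^1}$ below the critical mass of the cold-boundary Keller--Segel equation, after which the first paragraph applies verbatim with $L^1$ in place of $L^2$ --- is the crux; everything preceding it is soft.
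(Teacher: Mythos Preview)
Your reduction and soft estimates are correct and essentially coincide with the paper's: the Riccati inequality from \eqref{eq:L2prop} plus Poincar\'e gives the threshold $\epsilon_0(\Omega)$ below which $\|\rho\|_{L^2}$ decays exponentially, and the potential-energy identity (multiplying the $\rho$-equation by a height function) indeed yields $\int_0^{T_*}\|u\|_{L^2}^2\,dt\le C(\rho_0,u_0)(g+1)$, so $\bar u_n:=u_n/g_n\to 0$ in $L^2([0,T_*]\times\Omega)$. Up to this point you are tracking the paper.

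Where you diverge is the contradiction argument, and here you are proposing a much harder route than necessary. You rescale time and aim to analyse a singular transport--Stokes limit, then invoke relaxation and an enhanced-dissipation mechanism that you yourself flag as ``the crux'' and leave unproved. The paper avoids all of this. It works on the \emph{original} interval $[0,T_*]$, does not rescale time, and never needs the $\rho$-equation in the limit. Divide the Stokes equation by $g_n$ and pass to the limit in the weak formulation: since $\bar u_n\to 0$ strongly in $L^2_{t,x}$ (from your own potential-energy bound) and $\rho_n\rightharpoonup\rho_\infty$ in $L^2((0,T_*);H^1_0)$, the left side vanishes and one obtains simply
\[
\rho_\infty\,e_z=\nabla p_\infty \quad\text{weakly on }[0,T_*]\times\Omega.
\]
This says $\rho_\infty$ depends only on $z$. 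Now integrate in time: $\psi_\infty(x):=\int_0^{T_*}\rho_\infty(t,x)\,dt$ lies in $H^1_0(\Omega)$ (the uniform bound $\int_0^{T_*}\|\nabla\rho_n\|_{L^2}^2\,dt\le C(\rho_0)$ passes to $\psi_n$, and weak limits coincide). A nonnegative function in $H^1_0(\Omega)$ that depends only on $z$ must vanish identically, since its trace on $\partial\Omega$ is zero and a generic bounded domain is not a product of an interval with a cross-section. On the other hand $\rho_\infty\not\equiv 0$: the uniform $L^2$ bound upgrades (via a De Giorgi--Moser iteration independent of $u$) to a uniform $L^\infty$ bound, and a nonnegative sequence with $\|\rho_n\|_{L^2}\ge\epsilon_0$ and $\|\rho_n\|_{L^\infty}\le M$ cannot converge weakly to zero (test against $\chi_{[0,T_*]\times\Omega}$ and interpolate). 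This is the contradiction.

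So the missing idea in your proposal is that the contradiction is \emph{static and geometric}, not dynamic: the limiting Stokes balance forces $z$-stratification, and the Dirichlet trace kills stratified profiles. No time rescaling, no transport--Stokes system, no relaxation analysis, and no quantitative enhanced dissipation are needed.
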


We observe that if we fix any smooth passive divergence-free $u$ satisfying the no-flux $u \cdot n = 0$ boundary condition, then one can find smooth initial data $\rho_0$
such that the solution of the first equation in \eqref{eq:ksstokes} will lead to finite time blow up. The argument proving this is very similar to that of Theorem 8.1 in \cite{KX} for the case of $\T^2$; however the localization used in the proof makes it insensitive to the boundary condition.

We will use the expression $f \lesssim g$ to denote the following: there exists some constant $C$ only depending on domain $\Omega$ such that
$
f \le Cg.
$
In particular, we will denote a generic constant depending only on $\Omega$ by $C$, and it could change from line to line. Finally, we will use the Einstein summation convention. That is, by default we sum over the repeated indices; e.g. we write $a_ix_i := \sum_i a_ix_i$.\\

{\bf Acknowledgment. } The authors acknowledge partial support of the 
NSF-DMS grants 2006372 and 2306726.

\section{Local Well-Posedness of Regular Solution}
\label{sec:lwp}
In this section, we will establish the local well-posedness of problem \eqref{eq:ksstokes}, namely Theorem \ref{thm:lwp}. It is well-known that the classical parabolic-elliptic Keller-Segel equation is locally well-posed in domains such as $\R^d$ or $ \mathbb{T}^d$, $d = 2,3$, or in a smooth, bounded domain with Neumann boundary condition on $\rho$ in suitable function spaces (see e.g. \cite{BDP,KX,Winkler2012}). However, we were unable to locate a convenient reference for
a well-posedness theorem in the scale of Sobolev spaces $H^s$ 
in the scenario of \eqref{eq:ksstokes}. Thus for the sake of the completeness, we will give explicit \textit{a priori} estimates which lead to local well-posedness.

We first set up an appropriate Galerkin scheme that uses two sets of bases in Subsection \ref{subsect:galerkin}. In Subsection \ref{subsect:low}, we start with a set of lower order \textit{a priori} energy estimates which guarantee spatial regularity of a solution up to $H^2$. In Subsection \ref{subsect:high}, we will prove the existence of regular solutions by devising an inductive argument that boosts both temporal and spatial regularity up to $H^s$ for arbitrary $s$ using parabolic smoothing. In Subsection \ref{subsect:uniqueness}, we will complete the proof of Theorem \ref{thm:lwp} by showing the uniqueness of regular solutions. Finally, we will demonstrate an $L^2$ regularity criterion (i.e. Theorem \ref{thm:L2criterion}) in Subsection \ref{sec:crit}. It will be instrumental in establishing the global well-posedness of \eqref{eq:ksstokes}.

\begin{rmk}
We will only discuss the case when $d=3$. Then $d=2$ case follows from similar (and easier) arguments.
\end{rmk}

\subsection{Galerkin Approximation}\label{subsect:galerkin}
Since \eqref{eq:ksstokes} is a system of semilinear parabolic equations in a compact domain, it is convenient to construct a solution to \eqref{eq:ksstokes} by Galerkin approximation. Let $\{v_k\}_k$, $\{\lambda_k\}_k$ be the eigenfunctions and eigenvalues of $-\Delta$. Let $\{w_j\}_j$, $\{\eta_j\}_j$ be the eigenfunctions and eigenvalues of the Stokes operator $\calA$. Consider the following approximate system:
\begin{equation}
\label{galerkin0}
\begin{cases}
    \p_t\rhon + \Q_n(\un\cdot \nabla\rhon) - \Delta\rhon + \Q_n(\divv(\rhon\nabla(-\Delta)^{-1}\rhon)) = 0,\\
    \p_t \un + \calA\un = g\bP_n(\rhon  e_z),\\
    \rhon(0) = \Q_n\rho_0,\; \un(0) = \bP_n u_0,
\end{cases}
\end{equation}
where $\Q_n f := (f,v_k)_{L^2}v_k$, $\bP_nf := (f, w_j)_{L^2} w_j$. Here $(\cdot,\cdot)_{L^2}$ denotes the standard $L^2$-inner product. Note that the projection operators $\bP_n, \Q_n$ are symmetric with respect to $L^2$ inner product. Writing the approximated solutions $\rhon(t,x) = \rhon_k(t)v_k(x)$, $\un(t,x) = \un_j(t)w_j(x)$ (recall that we are summing over repeated indices), we obtain the following constant-coefficient ODEs in $t$: for $l = 1,\hdots, n$,
\begin{equation}
\label{galerkin}
\begin{cases}
\frac{d}{dt}\rhon_l + C^{(n)}_{ljk}\un_j\rhon_k + \lambda_l\rhon_l - D^{(n)}_{ljk}\rhon_k\rhon_j = 0,\\
\frac{d}{dt}\un_l + \eta_l\un_l = gC_{kl}\rhon_k e_z,\\
\rhon_l(0) = (\rho_0, v_l)_{L^2},\; \un_l(0) = (u_0, w_l)_{L^2},
\end{cases}
\end{equation}
where
$$
C_{ljk}^{(n)} := (\Q_n(w_j\cdot \nabla v_k),v_l)_{L^2},\; D_{ljk}^{(n)} := \Q_n(\divv(v_k\nabla(-\Delta)^{-1}v_j), v_l)_{L^2},
$$
$$
C_{kl} := (\bP v_k, w_l)_{L^2}.
$$
\label{rmk:apriori}
To close the Galerkin approximation argument, we shall prove suitable uniform-in-$n$ energy estimates for $(\rhon, \un)$ and pass to the limit using compactness theorems. For the sake of simplicity, we shall prove such energy estimates in an \textit{a priori} fashion, for sufficiently regular solutions
of the original system \eqref{eq:ksstokes}. One could verify that all estimates below can be carried over to the approximated solutions $(\rhon, \un)$ in a straightforward manner.

\subsection{Lower Order \textit{a priori} Estimates}\label{subsect:low}
Given intial data $\rho_0 \in H^1_0, u \in V$, we first show the following $L^\infty_tL^2_x$ and $L^2_tH^1_x$ estimates for a regular solution $(\rho,u)$:

\begin{prop}
\label{prop:l2bd}
Given initial data $\rho_0 \in H^1_0, u \in V$, we assume $(\rho, u)$ is a regular solution to \eqref{eq:ksstokes} on $[0,T]$ for some $T > 0$. Then for $t \in [0,T]$, we have
\begin{equation}
\label{est:l2}
\frac{d}{dt}\|\rho\|_{L^2}^2 + \|\nabla \rho\|_{L^2}^2 \lesssim \|\rho\|_{L^2}^6,\; \frac{1}{2}\frac{d}{dt}\|u\|_{L^2}^2 + \|\nabla u\|_{L^2}^2 \le g\|u\|_{L^2}\|\rho\|_{L^2}.
\end{equation}
Moreover, there exists $T_* \in (0,1]$ only depending on $\rho_0$, and a constant $C(u_0,\rho_0,g) > 0$ such that
\begin{equation}
    \label{est:L2rho}
    \sup_{t\in [0,T_*]}\|\rho(t)\|_{L^2}^2 + \int_0^{T_*}\|\nabla\rho(t)\|_{L^2}^2 ds \le 4\|\rho_0\|_{L^2}^2.
\end{equation}
\begin{equation}
    \label{est:L2u}
    \sup_{t\in [0,T_*]}\|u(t)\|_{L^2}^2 + \int_0^{T_*}\|\nabla u(t)\|_{L^2}^2 ds \le C(\|\rho_0\|_{L^2},\|u_0\|_{L^2})(g^2 + 1).
\end{equation}
\begin{proof}
First by testing the $\rho$-equation of \eqref{eq:ksstokes} by $\rho$ and integrating by parts, we have
$$
\frac{1}{2}\frac{d}{dt}\|\rho\|_{L^2}^2 + \|\nabla \rho\|_{L^2}^2 = \frac{1}{2}\int_\Omega \rho^3 dx \le C\|\rho\|_{L^2}^{3/2}\|\nabla\rho\|_{L^2}^{3/2}\le \frac{1}{2}\|\nabla\rho\|_{L^2}^2 + C\|\rho\|_{L^2}^{6},
$$
where we used the following standard Gagliardo-Nirenberg inequality in 3D for trace-free $f$:
$$
    \|f\|_{L^3}^3 \le C\|f\|_{L^2}^{3/2}\|\nabla f\|_{L^2}^{3/2}.
$$
After rearranging, we obtain the first inequality of \eqref{est:l2}. Similarly, we test the $u$-equation of \eqref{eq:ksstokes} by $u$. After integration by parts, we have
\begin{align}
\label{aux828a}
\frac{1}{2}\frac{d}{dt}\|u\|_{L^2}^2 + \|\nabla u\|_{L^2}^2  &= g\int_\Omega u\cdot(\rho  e_z)dx\le g\|u\|_{L^2}\|\rho\|_{L^2},
\end{align}
which proves the second inequality in \eqref{est:l2}.
Then, the estimate \eqref{est:L2rho} follows immediately from applying Gr\"onwall inequality to \eqref{est:l2} and choosing $T_* = T_*(\rho_0) \le 1$ sufficiently small. Now integrating \eqref{aux828a} from $0$ to $t \in (0,T_*)$, using \eqref{est:L2rho}, and taking supremum over $t$, we have
    \begin{equation}
    \label{l2:aux2}
    \sup_{t\in [0,T_*]}\|u(t)\|_{L^2} \le 8g\|\rho_0\|_{L^2}T_* + \|u_0\|_{L^2}.
    \end{equation}
Using \eqref{est:L2rho} and \eqref{l2:aux2} in the integrated in time version of \eqref{aux828a},
we obtain that
    \begin{equation}
    \label{l2:aux3}
    \int_0^{T_*}\|\nabla u(s)\|_{L^2}^2ds \le \|u_0\|_{L^2}^2 + 4gT_*\|\rho_0\|_{L^2}(8g\|\rho_0\|_{L^2}T_* + \|u_0\|_{L^2})
    \end{equation}
    The proof of \eqref{est:L2u} is finished after we combine \eqref{l2:aux2} and \eqref{l2:aux3}.
\end{proof}
\end{prop}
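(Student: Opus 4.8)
The plan is to obtain the two differential inequalities in \eqref{est:l2} by the standard energy method, testing each equation of \eqref{eq:ksstokes} against its own unknown, and then to extract the short-time bounds \eqref{est:L2rho} and \eqref{est:L2u} by a Grönwall argument on a time interval whose length depends only on $\|\rho_0\|_{L^2}$.

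First I would multiply the $\rho$-equation by $\rho$ and integrate over $\Omega$. The transport term $\int u\cdot\nabla\rho\,\rho$ vanishes since $\divv u = 0$ and $\rho|_{\partial\Omega}=0$; the dissipation gives $\|\nabla\rho\|_{L^2}^2$; and the chemotactic term, after integrating by parts and using $-\Delta(-\Delta)^{-1}\rho = \rho$, produces $\tfrac12\int_\Omega \rho^3$. I would then bound $\|\rho\|_{L^3}^3$ via the three-dimensional Gagliardo–Nirenberg inequality $\|\rho\|_{L^3}^3 \lesssim \|\rho\|_{L^2}^{3/2}\|\nabla\rho\|_{L^2}^{3/2}$ (valid for trace-free $\rho$, which holds here because $\rho|_{\partial\Omega}=0$), and absorb the gradient factor into the dissipation using Young's inequality, $ab \le \tfrac12 a^2 + C b^2$ with the split of exponents $\tfrac34 + \tfrac34 = \tfrac32 < 2$. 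This yields the first inequality of \eqref{est:l2}. For the second, I would apply the Leray projection and test \eqref{eq:stokes2} against $u$: the Stokes term gives $\|\nabla u\|_{L^2}^2$ (since $(\calA u, u) = \|\nabla u\|_{L^2}^2$ for $u \in D(\calA)$), and the right-hand side is $g\int_\Omega u\cdot(\rho e_z) \le g\|u\|_{L^2}\|\rho\|_{L^2}$ by Cauchy–Schwarz.

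Next, from the first inequality of \eqref{est:l2}, writing $y(t) = \|\rho(t)\|_{L^2}^2$ I have $y' \lesssim y^3$, so $y$ stays below $4y(0) = 4\|\rho_0\|_{L^2}^2$ on a time interval $[0, T_*]$ with $T_* = T_*(\rho_0) \in (0,1]$; integrating the same inequality over $[0,T_*]$ and using this pointwise bound on $y$ controls $\int_0^{T_*}\|\nabla\rho\|_{L^2}^2$ as well, giving \eqref{est:L2rho}. For \eqref{est:L2u}, I would first drop the good term and use $g\|u\|_{L^2}\|\rho\|_{L^2} \le \tfrac{d}{dt}\|u\|_{L^2}\cdot\|u\|_{L^2}$-type manipulation — more directly, dividing \eqref{aux828a} appropriately or just integrating it — to get $\sup_{[0,T_*]}\|u\|_{L^2} \le 8g\|\rho_0\|_{L^2}T_* + \|u_0\|_{L^2}$ using \eqref{est:L2rho}; then feeding this back into the time-integrated \eqref{aux828a} bounds $\int_0^{T_*}\|\nabla u\|_{L^2}^2$, and combining the two gives \eqref{est:L2u} with a constant of the stated form $C(\|\rho_0\|_{L^2},\|u_0\|_{L^2})(g^2+1)$.

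I do not expect a genuine obstacle here, since this is a soft a priori estimate; the only point requiring a little care is the justification that these formal computations are legitimate at the level of a \emph{regular solution} (so that all integrations by parts and the use of the equations hold pointwise in time) and, in the Galerkin context alluded to in Remark \ref{rmk:apriori}, that the projections $\Q_n, \bP_n$ do not spoil the cancellations — this works because $\Q_n$ and $\bP_n$ are self-adjoint $L^2$-projections, so testing the projected equation against $\rhon$ (resp.\ $\un$) is the same as testing the unprojected nonlinearity against $\rhon$ (resp.\ $\un$), and the transport and Stokes structure is preserved.
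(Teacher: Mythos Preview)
Your proposal is correct and follows essentially the same route as the paper: test each equation against its own unknown, use the 3D Gagliardo--Nirenberg inequality $\|\rho\|_{L^3}^3 \lesssim \|\rho\|_{L^2}^{3/2}\|\nabla\rho\|_{L^2}^{3/2}$ and Young's inequality to close the $\rho$-estimate, then run the ODE comparison $y' \lesssim y^3$ to select $T_*$, and finally bootstrap the $u$-bounds by first integrating \eqref{aux828a} to control $\sup\|u\|_{L^2}$ and feeding that back for $\int\|\nabla u\|_{L^2}^2$. Your closing remark on why the Galerkin projections preserve the cancellations is a welcome addition.
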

\begin{rmk}
    From now on, any appearance of $T_*$ refers to the time $T_*$ chosen in Proposition \ref{prop:l2bd}.
\end{rmk}

With Proposition \ref{prop:l2bd}, we will derive the following upgraded temporal and spatial regularity estimates for solution $(\rho,u)$ within the time interval $[0,T_*]$.

\begin{prop}
\label{prop:h1bd}
Assuming $(\rho,u)$ to be a regular solution to \eqref{eq:ksstokes} with initial data $\rho_0 \in H^1_0, u \in V$, there exists $C(\rho_0, u_0, g) > 0$ such that
\begin{align*}
    &\int_0^{T_*}\left(\|\rho(t)\|_2^2 + \|u(t)\|_2^2 + \|\p_t\rho(t)\|^2_{L^2} + \|\p_t u(t)\|^2_{L^2}\right) dt\\
    &+ \sup_{t \in [0,T_*]}(\|\rho(t)\|_1^2 + \|u(t)\|_1^2) \le C(\rho_0, u_0, g).
\end{align*}
\end{prop}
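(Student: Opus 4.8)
The plan is to bootstrap from the $L^\infty_t L^2_x \cap L^2_t H^1_x$ bounds of Proposition~\ref{prop:l2bd} to $H^1$-in-space and $L^2$-in-time $H^2$ bounds, together with the control on the time derivatives, all on the interval $[0,T_*]$. The natural energy quantities to differentiate are $\|\nabla\rho\|_{L^2}^2$ and $\|\calA^{1/2}u\|_{L^2}^2 = \|\nabla u\|_{L^2}^2$ (equivalently, we test the $\rho$-equation with $-\Delta\rho$ and the Stokes equation \eqref{eq:stokes2} with $\calA u$). For the $\rho$-equation this produces, after integration by parts, the good term $\|\Delta\rho\|_{L^2}^2$ and bad terms of the form $\int u\cdot\nabla\rho\,\Delta\rho$ and $\int \divv(\rho\nabla(-\Delta)^{-1}\rho)\,\Delta\rho$. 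The first I would estimate by $\|u\|_{L^6}\|\nabla\rho\|_{L^3}\|\Delta\rho\|_{L^2}$, using $\|u\|_{L^6}\lesssim\|\nabla u\|_{L^2}$ (Sobolev) and the interpolation $\|\nabla\rho\|_{L^3}\lesssim\|\nabla\rho\|_{L^2}^{1/2}\|\Delta\rho\|_{L^2}^{1/2}$ (valid for $\rho\in H^2\cap H^1_0$), then absorb the top power of $\|\Delta\rho\|_{L^2}$ into the good term via Young. The quadratic-nonlocal term is handled similarly: expanding $\divv(\rho\nabla(-\Delta)^{-1}\rho) = \nabla\rho\cdot\nabla(-\Delta)^{-1}\rho + \rho^2$, the elliptic regularity for the Dirichlet Laplacian gives $\|\nabla(-\Delta)^{-1}\rho\|_{L^\infty}$-type control in terms of $\|\rho\|_{L^2}$ and $\|\nabla\rho\|_{L^2}$ (or one keeps it in an $L^p$ that closes), and every resulting term is again of the form (lower-order Sobolev norms)$\times\|\Delta\rho\|_{L^2}^{\theta}$ with $\theta<2$. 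For the Stokes equation, testing \eqref{eq:stokes2} with $\calA u$ gives $\tfrac12\tfrac{d}{dt}\|\nabla u\|_{L^2}^2 + \|\calA u\|_{L^2}^2 = g\int\bP(\rho e_z)\cdot\calA u \le g\|\rho\|_{L^2}\|\calA u\|_{L^2}$, so Young again absorbs $\|\calA u\|_{L^2}$ and leaves $Cg^2\|\rho\|_{L^2}^2$ on the right, which is integrable in time by \eqref{est:L2rho}.

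Putting the two differential inequalities together yields, schematically,
\[
\frac{d}{dt}\big(\|\nabla\rho\|_{L^2}^2 + \|\nabla u\|_{L^2}^2\big) + \big(\|\Delta\rho\|_{L^2}^2 + \|\calA u\|_{L^2}^2\big) \lesssim P\big(\|\rho\|_{L^2},\|u\|_{L^2}\big)\big(1+\|\nabla\rho\|_{L^2}^2 + \|\nabla u\|_{L^2}^2\big) + g^2\|\rho\|_{L^2}^2
\]
for a polynomial $P$; since $\|\rho\|_{L^2}$, $\|u\|_{L^2}$ are already bounded on $[0,T_*]$ by Proposition~\ref{prop:l2bd}, Gr\"onwall closes the estimate and gives the $\sup_t(\|\rho\|_1^2 + \|u\|_1^2)$ bound and the $\int_0^{T_*}(\|\rho\|_2^2 + \|u\|_2^2)\,dt$ bound depending only on $(\rho_0,u_0,g)$ (here elliptic regularity $\|\rho\|_2\lesssim\|\Delta\rho\|_{L^2}$ for $\rho\in H^2\cap H^1_0$ and $\|u\|_2\lesssim\|\calA u\|_{L^2}$ are used). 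Finally, the time-derivative bounds follow by reading them off from the equations: $\p_t u = -\calA u + g\bP(\rho e_z)$ gives $\|\p_t u\|_{L^2}\lesssim\|\calA u\|_{L^2} + g\|\rho\|_{L^2}$, integrable in time by what was just proved; and $\p_t\rho = \Delta\rho - u\cdot\nabla\rho - \divv(\rho\nabla(-\Delta)^{-1}\rho)$ gives $\|\p_t\rho\|_{L^2}\lesssim\|\Delta\rho\|_{L^2} + \|u\|_{L^6}\|\nabla\rho\|_{L^3} + \|\nabla\rho\|_{L^3}\|\nabla(-\Delta)^{-1}\rho\|_{L^6} + \|\rho\|_{L^4}^2$, each factor controlled by the $H^1\cap(L^2_tH^2)$ bounds via Sobolev and interpolation, hence also $L^2$-in-time.

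The main obstacle is the $\rho$-equation: the drift term $u\cdot\nabla\rho$ and the chemotactic term must both be controlled \emph{without} a priori knowledge of $\|\Delta\rho\|_{L^2}$, so the whole scheme hinges on extracting every bad term in the form $(\text{already-controlled quantities})\cdot\|\Delta\rho\|_{L^2}^{\theta}$ with $\theta<2$ and absorbing the top power — this is where the dimension $d=3$ is tight (the Gagliardo--Nirenberg exponents are exactly on the boundary of what closes) and where the nonlocal operator $(-\Delta)^{-1}$ forces care about the Dirichlet boundary condition in the elliptic estimates. A secondary technical point is that all of this should strictly be done first for the Galerkin approximants $(\rho^{(n)},u^{(n)})$, where the projections $\Q_n,\bP_n$ commute with the relevant test functions ($-\Delta\rho^{(n)}$ is in the span of $\{v_k\}_{k\le n}$, $\calA u^{(n)}$ in the span of $\{w_j\}_{j\le n}$), so the same integrations by parts go through uniformly in $n$; I would just remark that this transfer is routine and carry out the computation at the a priori level as the paper announces in Remark~\ref{rmk:apriori}.
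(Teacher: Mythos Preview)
Your approach is essentially the paper's: test the $\rho$-equation with $-\Delta\rho$ and \eqref{eq:stokes2} with $\calA u$, absorb top-order terms by Young, close by Gr\"onwall, then read off $\p_t\rho,\p_t u$ from the equations. One point needs tightening, however: your drift estimate $\|u\|_{L^6}\|\nabla\rho\|_{L^3}\|\Delta\rho\|_{L^2}$ produces, after Young, a coefficient $\|\nabla u\|_{L^2}^4$ in front of $\|\nabla\rho\|_{L^2}^2$ --- not a polynomial in $(\|\rho\|_{L^2},\|u\|_{L^2})$ as your schematic claims --- so a joint Gr\"onwall on $\|\nabla\rho\|^2+\|\nabla u\|^2$ does not close linearly on all of $[0,T_*]$ (recall $T_*$ was fixed depending only on $\|\rho_0\|_{L^2}$, so a cubic-in-$X$ right-hand side could blow up before $T_*$). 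The fix, which the paper makes explicit, is to notice that the Stokes inequality $\tfrac{d}{dt}\|\nabla u\|^2+\|\calA u\|^2\le g^2\|\rho\|_{L^2}^2$ is already decoupled from the higher $\rho$-norms: integrate it \emph{first} to bound $\sup_t\|\nabla u\|_{L^2}^2$ and $\int_0^{T_*}\|u\|_2^2$, and only then run Gr\"onwall on $\|\nabla\rho\|^2$ alone with a now $L^1$-in-time coefficient. The paper in fact estimates the drift via $\|u\|_{L^\infty}\lesssim\|u\|_2$ (getting coefficient $\|u\|_2^2$) rather than your $L^6$--$L^3$--$L^2$ splitting, but either route works once the decoupling is observed.
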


\begin{proof}
Testing the $\rho$-equation in \eqref{eq:ksstokes} by $-\Delta \rho$ and integrating by parts, we obtain:
\begin{align*}
    \frac{1}{2}\frac{d}{dt}\|\nabla \rho\|_{L^2}^2 + \|\Delta \rho\|_{L^2}^2 = \int_\Omega \Delta\rho(u\cdot\nabla \rho) + \int_\Omega \Delta\rho \divv(\rho\nabla(-\Delta)^{-1}\rho) = I + J.
\end{align*}
Let us fix $\epsilon > 0$. Using Sobolev embedding, Poincaré inequality, and Young's inequality with $\epsilon$, we can estimate $I$ by:
\begin{align*}
    I &\le \|\Delta\rho\|_{L^2}\|\nabla\rho\|_{L^2}\|u\|_{L^\infty} \le \epsilon\|\Delta\rho\|_{L^2}^2 + C(\epsilon)\|u\|_{2}^2\|\nabla \rho\|_{L^2}^2.
\end{align*}
Moreover, we can write $J$ as:
$$
J = \int_\Omega \Delta\rho \left(\nabla \rho\cdot \nabla(-\Delta)^{-1}\rho - \rho^2\right) dx = J_1 + J_2.
$$
Using the standard elliptic estimate and Gagliardo-Nirenberg-Sobolev inequality, we can estimate $J_1$ by:
\begin{align*}
    J_1 &\le \|\Delta\rho\|_{L^2}\|\nabla\rho\|_{L^3}\|\nabla(-\Delta)^{-1}\rho\|_{L^6} \lesssim \|\Delta\rho\|_{L^2}\|\nabla\rho\|_{L^3}\|\nabla(-\Delta)^{-1}\rho\|_1\\
    &\lesssim \|\Delta\rho\|_{L^2}\|\nabla\rho\|_{L^2}^{1/2}\|\nabla\rho\|_{1}^{1/2}\|\rho\|_{L^2} \lesssim \|\rho\|_{2}^{3/2}\|\nabla\rho\|_{L^2}^{1/2}\|\rho\|_{L^2}\\
    &\le \epsilon\|\Delta\rho\|_{L^2}^2 + C(\epsilon)\|\nabla\rho\|_{L^2}^2\|\rho\|_{L^2}^4,
\end{align*}
where we also used Young's inequality in the final step.

We are going to use the following Gagliardo-Nirenberg inequalities: in dimension three,
\[ \|\rho\|_{L^4} \lesssim \|\Delta \rho\|_{L^2}^{3/8}\|\rho\|_{L^2}^{5/8}; \,\,\,
\|\rho\|_{L^4} \lesssim \|\rho\|_{1}^{3/4}\|\rho\|_{L^2}^{1/4}. \]
Then we can estimate $J_2$ as follows:
\begin{align*}
J_2 &\le \|\Delta\rho\|_{L^2}\|\rho\|_{L^4}^2 \le C\|\Delta\rho\|_{L^2}\|\Delta \rho\|^{1/2}_{L^2}\|\rho\|_{L^2}^{5/6}
\|\rho\|_1^{1/2}\|\rho\|_{L^2}^{1/6} \\
& = C\|\Delta\rho\|_{L^2}^{3/2}\|\rho\|_1^{1/2}\|\rho\|_{L^2} \le \epsilon\|\Delta\rho\|_{L^2}^2 + C(\epsilon)\|\nabla\rho\|_{L^2}^2\|\rho\|_{L^2}^4,
\end{align*}
Collecting the estimates above and choosing $\epsilon$ to be sufficiently small, we obtain the following:
\begin{align}
    \label{H1rho}
    \frac{d}{dt}\|\nabla \rho\|_{L^2}^2 + \|\Delta \rho\|_{L^2}^2 &\lesssim \left(\|\rho\|_{L^2}^4 + \|u\|_{2}^2\right)\|\nabla \rho\|_{L^2}^2
\end{align}
On the other hand, we test \eqref{eq:stokes2} by $\calA u$. Integrating by parts, we have
$$
\frac{1}{2}\frac{d}{dt}\|\nabla u\|_{L^2}^2 + \|\calA u\|_{L^2}^2 = g\int_\Omega \calA u \cdot \rho  e_z \le \frac{1}{2}\|\calA u\|_{L^2}^2 + \frac{g^2}{2}\|\rho\|_{L^2}^2
$$
Rearranging the above and using Theorem \ref{stokesest}, we conclude that,
\begin{equation}
    \label{H1u}
    \frac{d}{dt}\|\nabla u\|_{L^2}^2 + \|u\|_{2}^2 \le g^2\|\rho\|_{L^2}^2 \le 4g^2\|\rho_0\|_{L^2}^2,\; t \in [0,T_*],
\end{equation}
where the last inequality is due to Proposition \ref{prop:l2bd}. Integrating \eqref{H1u} from $0$ to $t$, $t \le T_*$ and then taking supremum of $t$ on $[0,T_*]$, we obtain
$$
\sup_{t \in [0,T_*]}\|\nabla u(t)\|_{L^2}^2 \le 4g^2\|\rho_0\|_{L^2}^2T_* + \|\nabla u_0\|_{L^2}^2;
$$
in addition,
\begin{align}\label{aux828b} \int_0^{T_*}\|u(t)\|_{2}^2 \leq 4g^2\|\rho_0\|^2_{L^2}T_* +\|\nabla u_0\|_{L^2}^2.   \end{align}
It follows that
$$
\sup_{t \in [0,T_*]}\|u(t)\|_1^2 + \int_0^{T_*}\|u(t)\|_{2}^2 dt\le C(u_0,\rho_0, g).
$$

Integrating \eqref{H1rho} and using \eqref{aux828b}, we have that for all $t \in [0,T_*]$,
\begin{align*}
\|\nabla \rho(t)\|_{L^2}^2 &\lesssim \| \rho_0\|_{1}^2\exp\left(\int_0^{T_*}( \|\rho\|_{L^2}^4 + \|u\|_{2}^2 ds\right) \le \|\rho_0\|_{1}^2\exp\left(C(\rho_0,g)T_* + \|u_0\|_1^2 \right) < \infty.
\end{align*}
Similarly to the case of $u,$ we can also use \eqref{H1rho} to control $\int_0^{T_*}\|\rho(t)\|_{2}^2 dt$ as well, arriving at
$$
\sup_{t \in [0,T_*]}\|\rho(t)\|_1^2 + \int_0^{T_*}\|\rho(t)\|_{2}^2 \le C(u_0,\rho_0, g).
$$
We have thus showed the spatial regularity of $\rho$ and $u$.

Finally, we shall obtain regularity estimates for the time derivatives. Using the equation \eqref{eq:ksstokes}, we see that
$$
\p_t\rho = -u\cdot \nabla\rho + \Delta\rho - \divv(\rho\nabla(-\Delta)^{-1}\rho)\,\,\,{\rm and}\,\,\, \p_t u = -\calA u + g\bP(\rho e_z).
$$
Using standard Sobolev embeddings and elliptic estimate, we have the following bounds:
\begin{align*}
    \int_0^{T_*}\|u\cdot \nabla\rho(t)\|_{L^2}^2 dt &\le \int_0^{T_*}\|u\|_{L^6}^2\|\nabla\rho\|_{L^3}^2 dt\lesssim \sup_{t \in [0,T_*]}\|u(t)\|_{1}^2 \int_0^{T_*}\|\rho(t)\|_{2}^2dt,\\
    \int_0^{T_*}\|\Delta\rho\|_{L^2}^2 dt &\le \int_0^{T_*}\|\rho(t)\|_2^2dt,\\
    \int_0^{T_*}\|\divv(\rho\nabla(-\Delta)^{-1}\rho)\|_{L^2}^2 dt &\lesssim \int_0^{T_*}\|\rho\|_{L^4}^4 + \|\nabla\rho \cdot \nabla(-\Delta)^{-1}\rho\|_{L^2}^2 dt\\
    &\lesssim \sup_{t\in[0, T_*]}\|\rho(t)\|_{1}^4 T_* + \sup_{t\in[0, T_*]}\|\rho(t)\|_{1}^2\int_0^{T_*}\|\rho(t)\|_{2}^2 dt,\\
    \int_0^{T_*} \|\calA u\|_{L^2}^2 + g\|\bP \rho\|_{L^2}^2 dt &\le \int_0^{T_*} \|u\|_2^2 + g\|\rho\|_{L^2}^2 dt.
\end{align*}
The above estimates and bounds we proved earlier imply that
\[ \int_0^{T_*}\|\p_t\rho\|_{L^2}^2 dt + \int_0^{T_*}\|\p_tu\|_{L^2}^2 dt \le C(u_0,\rho_0, g),\] and the proof is thus complete.
\end{proof}

With the regularity estimates above, we may construct solutions $(\rho, u)$ from $(\rhon, \un)$. The following standard compactness theorem is useful. We refer interested readers to Theorem IV.5.11 in \cite{BF} and Theorem 4 of Chapter 5 in \cite{Evans} for related proofs.
\begin{thm}\label{thm:compact}
Let
\begin{align*}
E_1 := \{\rho \in L^2((0,T);H^2),\; \p_t\rho \in L^2((0,T); L^2)\},\\
E_2 := \{u \in L^2((0,T);H^2 \cap V),\; \p_t u \in L^2((0,T); H)\}
\end{align*}
for some $T > 0$. Then $E_1$ is continuously embedded in $C([0,T], H^1)$, and $E_2$ is continuously embedded in $C([0,T], V)$.
\end{thm}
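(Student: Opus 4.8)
The two claims are classical interpolation–trace embeddings of Lions--Magenes type, and the cleanest route is to recognize each of $E_1,E_2$ as the space of functions that, together with their time derivative, lie in $L^2$ of two Hilbert spaces ordered by a continuous dense inclusion. The plan is then to invoke the abstract result (see Theorem IV.5.11 in \cite{BF}, and the $L^2/H^1_0/H^{-1}$ prototype Theorem 4, Chapter 5 in \cite{Evans}): if $X_0 \hookrightarrow X_1$ is continuous and dense and $w \in L^2((0,T);X_0)$ with $\p_t w \in L^2((0,T);X_1)$, then, after modification on a null set, $w \in C([0,T];[X_0,X_1]_{1/2})$ with
\[
\|w\|_{C([0,T];[X_0,X_1]_{1/2})} \lesssim \|w\|_{L^2((0,T);X_0)} + \|\p_t w\|_{L^2((0,T);X_1)}.
\]
Granting this, the theorem reduces entirely to identifying the half-way interpolation spaces $[X_0,X_1]_{1/2}$ in the two concrete settings.

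For $E_1$ I take $X_0 = H^2$ and $X_1 = L^2$, so the only point to verify is the Sobolev interpolation identity $[H^2,L^2]_{1/2}=H^1$ with equivalent norms. On a smooth bounded domain this is standard and follows, for instance, from a bounded linear extension operator $H^s(\Omega)\to H^s(\R^d)$ together with interpolation of the full-space Sobolev scale. This immediately yields $E_1\hookrightarrow C([0,T];H^1)$. For $E_2$ the natural choice is $X_0 = H^2\cap V = D(\calA)$ and $X_1 = H$, and here I would use the self-adjoint, positive Stokes operator $\calA$ to run the fractional-power calculus: the powers of $\calA$ generate the Hilbert scale with $D(\calA^0)=H$, $D(\calA^{1/2})=V$ (the form domain), and $D(\calA)=H^2\cap V$, where this last identification with $\|\cdot\|_{D(\calA)}\approx\|\cdot\|_{2}$ is exactly the Stokes elliptic estimate of Theorem \ref{stokesest}. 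Since $\calA$ is self-adjoint and positive, the interpolation theory of fractional powers gives $[D(\calA),H]_{1/2}=D(\calA^{1/2})=V$ with equivalent norms, and the abstract embedding then reads $E_2\hookrightarrow C([0,T];V)$.

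The main obstacle is precisely this identity $[D(\calA),H]_{1/2}=V$: one must confirm that the abstract half-way space produced by the fractional powers of $\calA$ coincides, as a normed space, with $V=H\cap H^1_0$ carrying the $H^1_0$ norm, which is where the divergence-free constraint and the homogeneous Dirichlet condition genuinely enter. The corresponding identity for $E_1$, while also an interpolation fact, is comparatively routine.

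If one prefers a fully self-contained argument that avoids quoting interpolation theory, the same two embeddings can be proved directly in the spectral bases of Subsection \ref{subsect:galerkin}. Expanding $u(t)=\sum_j u_j(t)w_j$ in the Stokes eigenfunctions, each coefficient satisfies $u_j\in H^1(0,T)\subset C([0,T])$, so the partial sums lie in $C([0,T];V)$; writing $\eta_j u_j(t)^2=\tfrac1T\int_0^T \eta_j u_j(s)^2\,ds+\tfrac1T\int_0^T\int_s^t 2\eta_j u_j\,\p_\tau u_j\,d\tau\,ds$ and summing over a tail of indices, the estimate
\[
\sup_{t\in[0,T]}\sum_{j>N}\eta_j u_j(t)^2 \;\lesssim\; \sum_{j>N}\int_0^T \big(\eta_j u_j^2+\eta_j^2 u_j^2+(\p_t u_j)^2\big)\,ds
\]
shows that the partial sums are uniformly Cauchy in $C([0,T];V)$, since the right-hand side is the tail of a series whose total equals $\|u\|_{L^2((0,T);V)}^2+\|u\|_{L^2((0,T);H^2)}^2+\|\p_t u\|_{L^2((0,T);H)}^2$ (using $\|\calA u\|_{L^2}\approx\|u\|_2$ again). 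The limit is $u$ and the bound is the continuity of the embedding; the identical argument with the Dirichlet eigenfunctions $\{v_k\}$ and eigenvalues $\{\lambda_k\}$ handles $E_1$ in the setting $\rho\in L^2((0,T);H^2\cap H^1_0)$ in which it is applied.
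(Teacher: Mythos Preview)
The paper does not supply its own proof of this theorem; it simply records the result as standard and directs the reader to Theorem IV.5.11 in \cite{BF} and Theorem 4, Chapter 5 in \cite{Evans}. Your proposal invokes exactly these references for the abstract Lions--Magenes embedding and then correctly identifies the interpolation spaces $[H^2,L^2]_{1/2}=H^1$ and $[D(\calA),H]_{1/2}=D(\calA^{1/2})=V$ (the latter via the fractional-power calculus for the self-adjoint positive Stokes operator, with Theorem \ref{stokesest} supplying the norm equivalence on $D(\calA)$). This is precisely the route the cited sources take, so your argument is both correct and aligned with what the paper intends. The alternative spectral-basis argument you sketch is also sound and has the virtue of being self-contained within the Galerkin framework already set up in the paper.
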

\begin{cor}
\label{cor:lowreg}
Given initial data $\rho_0 \in H^1_0$, $u \in V$, there exists a weak solution $(\rho,u)$ of the system \eqref{eq:ksstokes} satisfying
\begin{align}
\rho \in C([0,T]; H^1_0) \cap L^2((0,T); H^2\cap H^1_0),&\; u \in C([0,T]; V) \cap L^2((0,T); H^2\cap V),\label{regularity1}\\
\p_t \rho \in C([0,T_*];H^{-1}_0),&\; \p_t u \in C([0,T_*]; V^*).\label{regularity2}
\end{align}
\end{cor}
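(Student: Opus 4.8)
The plan is to construct $(\rho,u)$ as a limit of the Galerkin approximations $(\rhon,\un)$ solving \eqref{galerkin0}, using the uniform \textit{a priori} estimates of Propositions \ref{prop:l2bd} and \ref{prop:h1bd} together with a compactness argument. First, for each fixed $n$ the system \eqref{galerkin} is a finite-dimensional ODE with constant coefficients and quadratic nonlinearity, hence has a unique local solution by Picard--Lindel\"of. Since $\Q_n$ and $\bP_n$ are symmetric for the $L^2$ inner product and leave invariant the spans of the eigenfunctions appearing in the equations, the estimates of Propositions \ref{prop:l2bd} and \ref{prop:h1bd} hold verbatim for $(\rhon,\un)$ with constants independent of $n$ (as noted in Subsection \ref{subsect:galerkin}). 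In particular the approximations do not blow up and exist on all of $[0,T_*]$, with $\{\rhon\}$ bounded in $L^\infty((0,T_*);H^1_0)\cap L^2((0,T_*);H^2\cap H^1_0)$, $\{\un\}$ bounded in $L^\infty((0,T_*);V)\cap L^2((0,T_*);H^2\cap V)$, and $\{\p_t\rhon\},\{\p_t\un\}$ bounded in $L^2((0,T_*);L^2)$.

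By Banach--Alaoglu one extracts a subsequence (not relabeled) such that $\rhon\rightharpoonup\rho$ and $\un\rightharpoonup u$ weakly in $L^2((0,T_*);H^2)$, weakly-$*$ in $L^\infty((0,T_*);H^1)$, with $\p_t\rhon\rightharpoonup\p_t\rho$ and $\p_t\un\rightharpoonup\p_t u$ weakly in $L^2((0,T_*);L^2)$. By the Aubin--Lions lemma (using $H^1\hookrightarrow\hookrightarrow L^2$ and the bound on the time derivatives) the convergences $\rhon\to\rho$, $\un\to u$ also hold strongly in $C([0,T_*];L^2)$, and interpolating with the uniform $L^2_tH^2$ bound upgrades them to strong convergence in $L^2((0,T_*);H^1)$, hence (with $L^\infty_tH^1\hookrightarrow L^\infty_tL^6$) in $L^2((0,T_*);L^p)$ for every $p<6$ when $d=3$.

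These strong convergences are exactly what is needed to pass to the limit in the nonlinear terms. Testing \eqref{galerkin0} against a fixed $v_l$ and using that $\Q_n$ is symmetric and $\divv\un=0$, one has $(\Q_n(\un\cdot\nabla\rhon),v_l)=-\int_\Omega\rhon\un\cdot\nabla v_l\to-\int_\Omega\rho u\cdot\nabla v_l$ because $\rhon\un\to\rho u$ in $L^1((0,T_*);L^2)$; similarly, since $(-\Delta)^{-1}\colon L^2\to H^2\cap H^1_0$ is bounded, $\nabla(-\Delta)^{-1}\rhon\to\nabla(-\Delta)^{-1}\rho$ strongly in $L^2((0,T_*);L^6)$, so $\rhon\nabla(-\Delta)^{-1}\rhon\to\rho\nabla(-\Delta)^{-1}\rho$ in $L^1((0,T_*);L^2)$ and the chemotactic term passes to the limit as well; the linear terms pass to the limit by weak convergence. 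Combined with $\Q_n,\bP_n\to\mathrm{Id}$ strongly on $L^2$, $\Q_n\rho_0\to\rho_0$ in $H^1_0$, $\bP_nu_0\to u_0$ in $V$, and the density of $\bigcup_l\mathrm{span}\{v_1,\dots,v_l\}$ in $H^1_0$ (resp.\ of $\{w_j\}$ in $V$), it follows that $(\rho,u)$ is a weak solution of \eqref{eq:ksstokes} attaining the prescribed data.

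Finally, the membership in \eqref{regularity1} follows from the uniform bounds by weak and weak-$*$ lower semicontinuity of the norms, and the continuity in time is then Theorem \ref{thm:compact}. For \eqref{regularity2} I would read $\p_t\rho,\p_t u$ off the equations: since $\divv u=0$, $\p_t\rho=\Delta\rho-\divv(\rho u)-\divv(\rho\nabla(-\Delta)^{-1}\rho)$, and in $d\le3$ each term lies in $C([0,T_*];H^{-1}_0)$, because $\rho u$ and $\rho\nabla(-\Delta)^{-1}\rho$ belong to $C([0,T_*];L^3)\hookrightarrow C([0,T_*];L^2)$ (using $\rho,u,\nabla(-\Delta)^{-1}\rho\in C_tH^1\hookrightarrow C_tL^6$) and $\divv\colon L^2\to H^{-1}_0$ is bounded, while $\Delta\rho\in C_tH^{-1}_0$ since $\rho\in C_tH^1_0$; likewise $\p_t u=-\calA u+g\bP(\rho e_z)$ with $\calA u\in C([0,T_*];V^*)$ and $\bP(\rho e_z)\in C([0,T_*];H)\subset C([0,T_*];V^*)$. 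The main obstacle is the passage to the limit in the chemotactic nonlinearity, which hinges on upgrading the weak convergence of $\rhon$ to strong $L^2_tL^2_x$ convergence via Aubin--Lions --- made possible precisely by the bound on $\p_t\rhon$ from Proposition \ref{prop:h1bd}.
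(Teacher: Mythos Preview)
Your proof is correct and follows essentially the same approach as the paper: extract weak limits from the uniform Galerkin bounds, invoke the compactness statement (Theorem~\ref{thm:compact}) for the $C_tH^1$ regularity, and read off the continuity of the time derivatives directly from the equations using $\rho\in C_tH^1_0$, $u\in C_tV$. You supply more detail than the paper at the passage-to-the-limit step (the paper simply declares it ``straightforward''), using Aubin--Lions to get strong $C_tL^2$ convergence and then interpolation to $L^2_tH^1$; this is the natural way to justify that step and is consistent with the paper's framework.
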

\begin{proof}
The uniform bounds in Proposition \ref{prop:h1bd} inform us that there exists a subsequence of $\{\rhon\}_n, \{\un\}_n$, which we still denote by $\rhon, \un$, and $\rho, u$, such that
\begin{enumerate}
\item $\rhon \rightharpoonup \rho$ weak-$\ast$ in $L^\infty((0,T_*); H^1_0)$, weakly in $L^2((0,T); H^2\cap H^1_0)$; $\p_t \rhon \rightharpoonup \p_t \rho$ weakly in $L^2((0,T); L^2)$,
\item $\un \rightharpoonup u$ weak-$\ast$ in $L^\infty((0,T_*); V)$, weakly in $L^2((0,T); H^2\cap V)$; $\p_t \un \rightharpoonup \p_t u$ weakly in $L^2((0,T); H)$.
\end{enumerate}
It is straightforward to check that the limits $\rho$ and $u$ satisfy \eqref{eq:ksstokes} in the sense of distribution. Evoking Theorem \ref{thm:compact}, we have proved \eqref{regularity1}.

Now, we show $\p_t u \in C([0,T_*]; V^*)$. In view of \eqref{eq:stokes2}, it suffices to show that $-\calA u + g\rho e_2 \in C([0,T_*]; V^*)$. For simplicity, we show that the most singular term $\calA u \in C([0,T_*]; V^*)$, and the argument for $g\rho e_2$ follows similarly.  Choose $t, s \in [0,T_*]$ and pick arbitrary vector field $\phi \in V$. Integrating by parts, we observe that
\begin{align*}
\int_\Omega (\calA u(t,x) - \calA u(s, x)) \cdot \phi(x) dx &= \int_\Omega \calA^{1/2}(u(t,x) - u(s,x))\cdot \calA^{1/2}\phi dx\\
&\le \|u(t,\cdot) - u(s,\cdot)\|_1\|\phi\|_1.
\end{align*}
By duality, we observe that
$$
\|\calA u(t,\cdot) - \calA u(s, \cdot)\|_{V^*} \le \|u(t,\cdot) - u(s,\cdot)\|_1 \to 0
$$
as $t \to s$ due to $u \in C([0,T_*]; V)$. Thus, we have showed that $\calA u \in C([0,T_*]; V^*)$ and hence $\p_t u \in C([0,T_*]; V^*)$.

To show the needed regularity of $\p_t \rho$, it suffices to show that $-u\cdot \nabla \rho + \Delta \rho - \divv(\rho\nabla(-\Delta)^{-1}\rho) \in C([0,T_*]; H^{-1}_0)$. Similarly, we prove strong continuity for the most singular term $u \cdot \nabla \rho$. The rest of the terms will follow from a similar argument. Let $t, s \in [0,T_*]$. Picking $\varphi \in H^1_0$ and integrating by parts, we have
\begin{align*}
\int_\Omega &(u(t,x)\cdot \nabla \rho(t,x) - u(s,x)\cdot \nabla \rho(s,x))\varphi(x) dx\\
&= \int_\Omega (u(t,x) - u(s,x))\cdot\nabla \rho(t,x)\varphi dx + \int_\Omega u(s,\cdot)\cdot\nabla (\rho(t,x) - \rho(s,x))\varphi(x) dx\\
&= \int_\Omega \divv((u(t,x) - u(s,x))\rho(t,x))\varphi dx + \int_\Omega \divv(u(s,\cdot) (\rho(t,x) - \rho(s,x)))\varphi(x) dx\\
&= -\int_\Omega ((u(t,x) - u(s,x))\rho(t,x))\cdot\nabla\varphi dx - \int_\Omega (u(s,\cdot) (\rho(t,x) - \rho(s,x)))\cdot\nabla\varphi(x) dx.
\end{align*}
The first term on RHS can be estimated by:
\begin{align*}
\int_\Omega ((u(t,x) - u(s,x))\rho(t,x))\cdot\nabla\varphi dx &\le \|u(t,\cdot) - u(s,\cdot)\|_{L^3}\|\rho(t,\cdot)\|_{L^6}\|\varphi\|_1\\
&\lesssim \|u(t,\cdot) - u(s,\cdot)\|_1\|\rho(t,\cdot)\|_1\|\varphi\|_1\\
&\le C(\rho_0,u_0,g)\|u(t,\cdot) - u(s,\cdot)\|_1\|\varphi\|_1.
\end{align*}
Note that we used Sobolev embedding in the second inequality and the uniform bound of $\rho$ in $ L^\infty((0,T_*); H^1_0)$ norm in the last inequality. Similarly, we can estimate the second term on RHS by:
$$
\int_\Omega (u(s,\cdot) (\rho(t,x) - \rho(s,x)))\cdot\nabla\varphi(x) dx \le C(\rho_0,u_0,g)\|\rho(t,\cdot) - \rho(s,\cdot)\|_1\|\varphi\|_1
$$
thanks to $u \in L^\infty((0,T); V)$. Combining the two estimates above and using duality, we conclude that
$$
\|u(t,\cdot)\cdot\nabla\rho(t,\cdot)-u(s,\cdot)\cdot\nabla\rho(s,\cdot)\|_{H^{-1}_0} \le C(\rho_0,u_0,g)(\|u(t,\cdot) - u(s,\cdot)\|_1 + \|\rho(t,\cdot) - \rho(s,\cdot)\|_1) \to 0
$$
as $t \to s$ due to $u \in C([0,T_*]; V)$ and $\rho \in C([0,T_*];H^1_0)$. This verifies $\p_t \rho \in C([0,T_*];H^{-1}_0)$, and we have proved \eqref{regularity2}.
\end{proof}

\subsection{Higher Order \textit{a priori} Estimates}\label{subsect:high}
Our next task is to establish the smoothness of a solution $(\rho,u)$ for positive times, namely
$$
\rho \in C^\infty((0,T_*] \times \Omega),\; u \in C^\infty((0,T_*] \times \Omega),
$$
via energy estimates in higher order Sobolev norms. We would like to remark on the following caveat: with Dirichlet boundary condition imposed on both $\rho$ and $u$, one cannot obtain higher order Sobolev estimates by commuting the differential operator $\p^s$ with the equation, where $\p^s$ denotes a general $s$-th order spatial derivative. The main reason is that when we treat the dissipation term, integration by parts incurs a boundary term that is difficult to control.
To remedy this issue, we commute time derivatives $\p_t^k$ through the equation. It is clear that no boundary terms are generated since $\p_t$ preserves Dirichlet boundary condition. By applying this strategy, we can improve regularity in time, after which spatial regularity can be upgraded using elliptic estimates.

Again, to obtain the claimed regularity we should proceed by the Galerkin scheme and perform the estimates in Proposition \ref{prop:highreg} for the approximated solutions. Since this step is similar to that in Corollary \ref{cor:lowreg}, we omit this tedious part and will proceed with
only \textit{a priori} estimates as follows.

\begin{prop}
\label{prop:highreg}
Assume $(\rho,u)$ is a regular solution to problem \eqref{eq:ksstokes} with initial condition $\rho_0 \in H_0^1, u_0 \in V$. Then the following bounds hold:
\begin{align}
    t^{k}\left(\|\p_t^l\rho(t,\cdot)\|_{1+k-2l}^2 + \|\p_t^l u(t,\cdot)\|_{1+k-2l}^2\right) &\le C(\rho_0,u_0,g,k),\label{est:regest1}\\
    t^{k}\int_t^{T_*}\left(\|\p_t^l\rho(\tau,\cdot)\|_{2+k-2l}^2 + \|\p_t^l u(\tau,\cdot)\|_{2+k-2l}^2\right)d\tau &\le C(\rho_0, u_0,g,k) \label{est:regest2},
\end{align}
for any $t \in (0,T_*]$, $k \in \N$, $0 \le l \le \lfloor \frac{k+1}{2}\rfloor,$ where $\lfloor\cdot\rfloor$ denotes the floor function.
\end{prop}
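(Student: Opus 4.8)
The plan is to run a double induction on the time-derivative order $l$ and the Sobolev-regularity parameter $k$, using the time-differentiated equations together with elliptic (resp. Stokes) regularity to trade temporal smoothness for spatial smoothness. The base case $k=0,1$ and $l=0$ is essentially Proposition~\ref{prop:h1bd}; in particular \eqref{est:regest1}--\eqref{est:regest2} with $k\le 1$ follow from the $H^1$ bounds and the $L^2_tH^2_x$, $L^2_tL^2_x(\partial_t)$ bounds already established. The weight $t^k$ is the usual parabolic-smoothing device: each application of the energy method picks up one power of $t$ because we must multiply the differential inequality by $t^k$ and absorb the stray term $k t^{k-1}(\cdots)$ using the bound from the previous induction step.

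First I would set up the inductive step for the temporal regularity. Differentiating the $\rho$-equation $k$ times in $t$ gives $\partial_t(\partial_t^k\rho) - \Delta \partial_t^k\rho = -\partial_t^k(u\cdot\nabla\rho) - \partial_t^k\,\divv(\rho\nabla(-\Delta)^{-1}\rho)$, and similarly $\partial_t(\partial_t^k u) + \calA \partial_t^k u = g\bP(\partial_t^k\rho\, e_z)$. Testing the first against $\partial_t^k\rho$ and the second against $\partial_t^k u$, one gets energy identities of the form $\tfrac12\tfrac{d}{dt}\|\partial_t^k\rho\|_{L^2}^2 + \|\nabla\partial_t^k\rho\|_{L^2}^2 = (\text{nonlinear terms})$. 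The nonlinear terms are handled by Leibniz: $\partial_t^k(u\cdot\nabla\rho) = \sum_{j=0}^k \binom{k}{j}\partial_t^j u\cdot\nabla\partial_t^{k-j}\rho$, and likewise for the quadratic chemotactic term, which after the identity $\divv(\rho\nabla(-\Delta)^{-1}\rho) = \nabla\rho\cdot\nabla(-\Delta)^{-1}\rho - \rho^2$ splits into products of lower-order-in-$t$ factors. Each such product is estimated exactly as in the proof of Proposition~\ref{prop:h1bd} via Gagliardo--Nirenberg--Sobolev interpolation and the elliptic estimate $\|\nabla(-\Delta)^{-1}f\|_1\lesssim\|f\|_{L^2}$, with the top-order factor absorbed into the dissipation $\|\nabla\partial_t^k\rho\|_{L^2}^2$ (or $\|\calA\partial_t^k u\|_{L^2}^2$) by Young's inequality, and all lower-order-in-$t$ factors controlled by the inductive hypothesis on $[t/2,T_*]$, say. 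Multiplying by $t^k$, integrating, and using Gr\"onwall yields $t^k\|\partial_t^k\rho(t)\|_{L^2}^2 + t^k\int_t^{T_*}\|\nabla\partial_t^k\rho\|_{L^2}^2 \le C$, i.e.\ \eqref{est:regest1}--\eqref{est:regest2} at the ``maximal'' value $l=\lfloor (k+1)/2\rfloor$ when $1+k-2l\in\{-1,0\}$, handled by interpreting negative-index norms as the $H^{-1}_0$ norm via the equation, or more cleanly by starting the spatial bootstrap from $l$ with $1+k-2l\ge 0$.

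Next I would convert temporal into spatial regularity. Given control of $\partial_t^l\rho$ in $L^\infty_t H^{1+k-2l}_x$ and of $\partial_t^{l+1}\rho$ in the next lower norm, rewrite the $\rho$-equation as the elliptic problem $-\Delta\partial_t^l\rho = -\partial_t^{l+1}\rho - \partial_t^l(u\cdot\nabla\rho) - \partial_t^l\,\divv(\rho\nabla(-\Delta)^{-1}\rho)$ with zero Dirichlet data, and apply elliptic regularity in the smooth bounded domain $\Omega$: $\|\partial_t^l\rho\|_{m+2}\lesssim \|\text{RHS}\|_m + \text{l.o.t.}$ Bounding the RHS in $H^m$ requires estimating the Leibniz products in $H^m$, which is the standard Moser-type tame estimate $\|fg\|_m\lesssim\|f\|_m\|g\|_{L^\infty}+\|f\|_{L^\infty}\|g\|_m$ plus Sobolev embeddings (using $d\le 3$), all of whose ingredients are available once one climbs the ladder in the right order $(l,k)$. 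The analogous step for $u$ uses the Stokes regularity $\|\partial_t^l u\|_{m+2}\lesssim\|\calA\partial_t^l u\|_m = \|{-}\partial_t^{l+1}u + g\bP(\partial_t^l\rho\,e_z)\|_m$ (Theorem~\ref{stokesest}). Iterating, for each fixed $k$ one obtains all the bounds \eqref{est:regest1}--\eqref{est:regest2} with $0\le l\le\lfloor(k+1)/2\rfloor$, and then increments $k$.

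The main obstacle is bookkeeping: one must order the induction so that every term on the right-hand side at stage $(k,l)$ has already been bounded --- concretely, the time-differentiated energy estimates should be done first for all admissible $l$ at a given $k$, producing the low-spatial-regularity bounds, and the elliptic/Stokes bootstrap should then raise the spatial index, but the Leibniz products couple $\partial_t^j$ for all $j\le k$, so one needs the full previous-$k$ information, not just the previous-$l$ information. A second, milder subtlety is the borderline norms $1+k-2l\in\{-1,0\}$: the cleanest fix is to observe that for $1+k-2l=-1$ the stated bound is vacuous/reinterpreted (or one simply restricts attention to $l$ with $1+k-2l\ge 0$ and recovers the remaining cases directly from the equation), and for $1+k-2l=0$ the estimate is just an $L^2$ bound obtained without needing the dissipation term to control a spatial gradient of the same order. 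Neither the GNS inequalities nor the elliptic/Stokes estimates present any new difficulty beyond those already encountered in Propositions~\ref{prop:l2bd} and~\ref{prop:h1bd}; the work is purely in organizing the induction and tracking the dependence of constants on $k$.
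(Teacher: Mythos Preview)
Your overall architecture---commute time derivatives through the system, run energy estimates on $\partial_t^l\rho,\partial_t^l u$, then trade temporal for spatial regularity via elliptic/Stokes estimates---is exactly the paper's. But there are two concrete problems. First, an indexing slip: for a given $k$ the highest time derivative appearing in the statement is $l=S:=\lfloor(k+1)/2\rfloor$, not $k$; you should be testing the $S$-times-differentiated equations by $\partial_t^S\rho$ (resp.\ $\partial_t^S u$), not the $k$-times-differentiated ones. Relatedly, $1+k-2l$ is never $-1$ in the stated range, so that ``borderline'' case is spurious.

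Second, and more substantive: the step ``absorb the stray term $k\,t^{k-1}E_S$ using the bound from the previous induction step'' does not go through as written. The induction hypothesis at level $k-1$ controls $\partial_t^l$ only for $l\le\lfloor k/2\rfloor$, which for odd $k$ is $S-1$, not $S$; so $t^{k-1}E_S$ is \emph{not} part of the hypothesis. The paper handles this by a mean-value argument: on $[t/2,t]$ one uses the equation at order $S-1$ to write $\partial_t^S\rho=\Delta\partial_t^{S-1}\rho-\cdots$ and $\partial_t^S u=-\calA\partial_t^{S-1}u+\cdots$, then invokes the $L^2_\tau H^2_x$-type bounds on $\partial_t^{S-1}\rho,\partial_t^{S-1}u$ (which \emph{are} in the hypothesis at level $k-1$) to get $\int_{t/2}^t E_S(\tau)\,d\tau\le C\,t^{1-k}$; the mean value theorem then produces $\tau_0\in[t/2,t]$ with $E_S(\tau_0)\le C\,t^{-k}$, furnishing the initial datum for Gr\"onwall on $[\tau_0,t]$. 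The cross terms $G_r$ in the energy inequality (products of intermediate time derivatives) require a separate lemma of the same flavor. Without this bridge from $\partial_t^{S-1}$-information to an initial bound on $E_S$, your weighted-Gr\"onwall closes only formally.
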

\begin{proof}
    We prove the proposition by inducting on $k$. Since $k=0$ case is already proved by Proposition \ref{prop:h1bd}, we now assume that the statement holds up to index $k-1$. We will discuss two cases based on the parity of $k$. We also remind the readers that the constant $C(\rho_0,u_0,g,k)$ might change from line to line.
    \begin{enumerate}
    \item \textbf{$k$ is odd.} Let us write $S = \frac{k+1}{2}$, and define the $s$-energy
    \[ E_s(\tau) = \|\srho{s}(\tau,\cdot)\|_{L^2}^2 + \|\su{s}(\tau,\cdot)\|_{L^2}^2 \] for any $0\le s \le S$. From now on, we fix arbitrary $t \in (0,T_*]$. This case can be detailed into the following steps.

    \textbf{Step 1: show \eqref{est:regest1}, \eqref{est:regest2} with $l = S$.}
    Commuting $\p_t^s$ with \eqref{eq:ksstokes} for $0 \le s \le S$, we obtain that
    \begin{subequations}
    \label{seqn}
    \begin{align}
        \p_t \srho{s} &- \Delta \srho{s} + \sum_{r=0}^s{s \choose r}\bigg[(\su{r}\cdot\nabla)\srho{s-r}+ \srho{s-r}\srho{r} + \nabla\srho{s-r} \cdot \nabla (-\Delta)^{-1}(\srho{r})\bigg] = 0, \label{srhoeqn}\\
        \p_t \su{s} &+ \calA\su{s} = g\bP(\srho{s}e_z), \label{sueqn}
    \end{align}
    \end{subequations}
    equipped with boundary conditions $\srho{s}|_{\p\Omega} = 0$, $\su{s}|_{\p\Omega} = 0$. Testing \eqref{sueqn} with $s=S$ by $\su{S}$, we obtain that
    $$
    \frac{1}{2}\frac{d}{dt}\|\su{S}\|_{L^2}^2 + \|\nabla \su{S}\|_{L^2}^2 \le \frac{g}{2}\left(\|\su{S}\|_{L^2}^2 + \|\srho{S}\|_{L^2}^2\right).
    $$
    Testing \eqref{srhoeqn} with $s=S$ by $\srho{S}$:
    \begin{equation}
        \frac{1}{2}\frac{d}{dt}\|\srho{S}\|_{L^2}^2 + \|\nabla \srho{S}\|_{L^2}^2 = \sum_{r=0}^S{S \choose r}(I_r + J_r + K_r),
    \end{equation}
    where
    $$
    I_r = \int_\Omega (\srho{S})(\su{r}\cdot\nabla)\srho{S-r},\;J_r = \int_\Omega (\srho{S})\srho{S-r}(\srho{r}),
    $$
    $$
    K_r = \int_\Omega (\srho{S})\nabla\srho{S-r} \cdot \nabla (-\Delta)^{-1}(\srho{r}).
    $$

To estimate $I_r$, first note that $I_0 = 0$ by incompressibility and integration by parts.
    For $1 \le r \le S-1$, we integrate $I_r$ by parts once to obtain:
    $$
    I_r = -\int_\Omega \p_j\srho{S}\su{r}_j\srho{S-r},
    $$
    where we also used the incompressibility of $\su{r}$. Thus, we can estimate:
    \begin{align*}
        I_r &\le \|\nabla \srho{S}\|_{L^2}\|\su{r}\|_{L^3}\|\srho{S-r}\|_{L^6} \le \delta\|\nabla\srho{S}\|_{L^2}^2 + C(\delta)\|\su{r}\|_{L^3}^2\|\srho{S-r}\|_1^2,
    \end{align*}
    for some $\delta > 0$. If $r = S$, we instead estimate:
    $$
    I_S \le \|\nabla \srho{S}\|_{L^2}\|\su{S}\|_{L^2}\|\rho\|_{L^\infty} \le \delta\|\nabla\srho{S}\|_{L^2}^2 + C(\delta)\|\rho\|_2^2\|\su{S}\|_{L^2}^2.
    $$
    This concludes the estimates of $I_r$. To estimate $J_r$, we note that if $r = 0$ or $r= S$, we have
    \begin{align*}
        J_r &\le \|\srho{S}\|_{L^2}^2\|\rho\|_{L^\infty} \lesssim \|\srho{S}\|_{L^2}^2\|\rho\|_2
    \end{align*}
    If $1 \le r \le S-1$, then we have
    \begin{align*}
        J_r \le \frac{1}{2}\|\srho{S}\|_{L^2}^2 + \frac12\|\srho{r}\|_1^2\|\srho{S-r}\|_1^2.
    \end{align*}

    Now we estimate $K_r$. If $r = 0$, we use the standard elliptic estimate and Young's inequality to obtain:
    $$
    K_0 \le \delta\|\nabla \srho{S}\|_{L^2}^2 + C(\delta)\|\rho\|_{1}^2 \|\srho{S}\|_{L^2}^2,
    $$
    where $\delta > 0$. If $r = S$, we apply elliptic estimate and Sobolev embedding:
    $$
    K_S \le \|\nabla \rho\|_{L^3}\|\srho{S}\|_{L^2}\|\nabla(-\Delta)^{-1}\srho{S}\|_{L^6} \lesssim \|\nabla \rho\|_{1}\|\srho{S}\|_{L^2}^2.
    $$
    If $1 \le r \le S-1$, we can estimate
    \begin{align*}
    K_r &\le \|\nabla \srho{S-r}\|_{L^3}\|\srho{S}\|_{L^2}\|\nabla(-\Delta)^{-1}\srho{r}\|_{L^6} \le \frac{1}{2}\|\srho{S}\|_{L^2}^2 + C \|\nabla \srho{S-r}\|_1^2\|\srho{r}\|_{L^2}^2.
    \end{align*}
After choosing $\delta>0$ to be sufficiently small, the above estimates yield the following differential inequality: for $\tau \in (0,T_*)$,
    \begin{align}
        \label{Senergyodd}
        \frac{dE_S}{d\tau} + \|\nabla\srho{S}(\tau,\cdot)\|_{L^2}^2 + &\|\nabla\su{S}(\tau,\cdot)\|_{L^2}^2 \le C(k)\bigg[ \left(1+g+\|\rho\|_2 + \|\rho\|_2^2 \right) E_S(\tau)\notag\\
        &+\sum_{r=1}^{S-1}\big(\|\nabla \srho{S-r}\|_1^2\|\srho{r}\|_{L^2}^2+ \|\srho{r}\|_1^2\|\srho{S-r}\|_1^2+\|\su{r}\|_{L^3}^2\|\srho{S-r}\|_1^2\big)\bigg]\notag\\
        & = C(k)\left(F(\tau)E_S(\tau) + \sum_{r=1}^{S-1}G_r(\tau)\right)
    \end{align}
with \begin{gather*} F(\tau) = 1+g+\|\rho(\tau,\cdot)\|_2 + \|\rho(\tau,\cdot)\|_2^2, \\
 G_r(\tau) = \|\nabla \srho{S-r}\|_1^2\|\srho{r}\|_{L^2}^2+ \|\srho{r}\|_1^2\|\srho{S-r}\|_1^2+\|\su{r}\|_{L^3}^2\|\srho{S-r}\|_1^2. \end{gather*}
    To proceed, we need the following useful lemma:
\begin{lem}
\label{lem1}
There exists $\tau_0 \in [t/2,t]$ such that $E_S(\tau_0) \le C(\rho_0,u_0,g,k)t^{-k}$.
\begin{proof}
Let us consider \eqref{seqn} with $s = S-1$. For any $\tau \in [t/2,t]$, we note that by \eqref{sueqn},
\begin{align*}
\|\su{S}(\tau)\|_{L^2}^2 &\lesssim \|\calA\su{S-1}\|_{L^2}^2 + g^2\|\srho{S-1}\|_{L^2}^2 \le \|\su{S-1}\|_2^2 + g^2\|\srho{S-1}\|_{L^2}^2.
\end{align*}
Integrating over $[t/2,t]$ and using \eqref{est:regest2} at index $k-1$ (which is valid as this is part of the induction hypothesis), we obtain
\begin{equation}
\label{lem1ueq}
\int_{t/2}^t \|\su{S}(\tau)\|_{L^2}^2 d\tau \le \int_{t/2}^{T_*} \|\su{S}(\tau)\|_{L^2}^2 d\tau\le C(\rho_0,u_0,g,k)t^{1-k}.
\end{equation}

Similarly, applying H\"older inequality to \eqref{srhoeqn}, we have
    \begin{align}
    \label{srhointer}
        \|\srho{S}\|_{L^2}^2 &\lesssim \|\srho{S-1}\|_{2}^2 + \sum_{r=0}^{S-1}C(k)\bigg(\|\su{r}\|_1^2\|\nabla\srho{S-1-r}\|_1^2\notag\\
        &+ \|\srho{S-1-r}\|_1^2\|\srho{r}\|_1^2 + \|\nabla \srho{S-1-r}\|_1^2\|\srho{r}\|_{L^2}^2\bigg).
    \end{align}

    Observe that given the induction hypothesis, applying \eqref{est:regest2} with index $k-1$, we have
    $$
    \int_{t/2}^t \|\srho{S-1}(\tau)\|_{2}^2d\tau \le C(\rho_0,u_0,g,k)t^{1-k}.
    $$
    Also, for $r = 0,\hdots, S-1$,
    $$
    \int_{t/2}^t \|\su{r}(\tau)\|_1^2\|\nabla\srho{S-1-r}(\tau)\|_1^2 d\tau \le C(\rho_0,u_0,g,k)t^{-2r}t^{-2(S-r-1)} = C(\rho_0,u_0,g,k)t^{1-k},
    $$
    where we applied \eqref{est:regest1} with index $2r$ to $\|\su{r}(\tau)\|_1$ and \eqref{est:regest2} with index $2(S-r-1)$ to $\|\nabla\srho{S-1-r}(\tau)\|_1$. In a similar fashion, we can also obtain the following bound:
    $$
    \int_{t/2}^t \left[\|\srho{S-1-r}(\tau)\|_1^2\|\srho{r}(\tau)\|_1^2 + \|\nabla \srho{S-1-r}(\tau)\|_1^2\|\srho{r}(\tau)\|_{L^2}^2\right]d\tau \le C(\rho_0,u_0,g,k)t^{1-k}.
    $$
    Collecting the estimates above and combining with \eqref{srhointer}, we have
    \begin{equation}
    \label{lem1rhoeq}
    \int_{t/2}^t \|\srho{S}(\tau)\|_{L^2}^2d\tau \le C(\rho_0,u_0,g,k)t^{1-k}.
    \end{equation}
    Combining \eqref{lem1ueq} and \eqref{lem1rhoeq}, we have
    $$
    \int_{t/2}^t \left( \|\su{S}(\tau)\|_{L^2}^2+\|\srho{S}(\tau)\|_{L^2}^2 \right) d\tau \le C(\rho_0,u_0,g,k)t^{1-k}.
    $$
    By mean value theorem, we can find a $\tau_0 \in (t/2,t)$ such that
    $$
    E_S(\tau_0) = \|\su{S}(\tau_0)\|_{L^2}^2 + \|\srho{S}(\tau_0)\|_{L^2}^2 \le C(\rho_0,u_0,g,k)t^{-k},
    $$
    and this concludes the proof.
\end{proof}
\end{lem}

We also need another lemma that treats the terms $G_r$.
    \begin{lem}
    \label{lem2}
    Let $\tau_0$ be chosen as in Lemma \ref{lem1}. Then for any $r = 1,\hdots, S-1$, we have
    $$
    \int_{\tau_0}^{T_*}G_r(\tau)d\tau \le C(\rho_0, u_0, g, k)t^{-k}.
    $$
    \begin{proof}
    We fix $r = 1,\hdots, S-1$. By definition of $G_r$, we can write
    \begin{align*}
     \int_{\tau_0}^{T_*}G_r(\tau)d\tau &= \int_{\tau_0}^{T_*}\left( \|\nabla \srho{S-r}\|_1^2\|\srho{r}\|_{L^2}^2 + \|\srho{r}\|_1^2\|\srho{S-r}\|_1^2+\|\su{r}\|_{L^3}^2\|\srho{S-r}\|_{1}^2 \right) d\tau\\
     &=: \int_{\tau_0}^{T_*}\left( G_r^1(\tau) + G_r^2(\tau)+G_r^3(\tau) \right) d\tau.
    \end{align*}
    Applying \eqref{est:regest1} with index $2r-1$ and \eqref{est:regest2} with index $k-2r+1$ to terms $\|\srho{r}\|_{L^2}^2$ and $\|\nabla \srho{S-r}\|_1^2$ respectively, we observe that
    \begin{align*}
    \int_{\tau_0}^{T_*}G_r^1(\tau)d\tau &\le C(\rho_0,u_0,g,k)\tau_0^{1-2r}\int_{\tau_0}^{T_*}\| \srho{S-r}(\tau)\|_2^2d\tau\\
    &\le C(\rho_0,u_0,g,k)\tau_0^{-(2r-1)}\tau_0^{-(k-2r+1)}\\
    &\le C(\rho_0,u_0,g,k)t^{-k},
    \end{align*}
    where we used the fact that $\tau_0 > t/2$.

    To study the term involving $G_r^2$, we will apply \eqref{est:regest1} with index $2r$ and \eqref{est:regest2} with index $k-2r$ to terms $\|\srho{r}\|_1^2$ and $\|\srho{S-r}\|_1^2$ respectively. This yields:
    \begin{align*}
        \int_{\tau_0}^{T_*}G_r^2(\tau)d\tau &\le C(\rho_0,u_0,g,k)\tau_0^{-2r}\int_{\tau_0}^{T_*}\| \srho{S-r}(\tau)\|_1^2d\tau\\
        &\le C(\rho_0,u_0,g,k)\tau_0^{-2r}\tau_0^{-(k-2r)}\\
    &\le C(\rho_0,u_0,g,k)t^{-k},
    \end{align*}

    Finally, using Sobolev embedding, Gagliardo-Nirenberg-Sobolev inequality, and Cauchy-Schwarz inequality,
    \begin{align*}
        \int_{\tau_0}^{T_*}G_r^3(\tau)d\tau &\le \int_{\tau_0}^{T_*}\|\su{r}\|_{L^3}^2\|\srho{S-r}\|_1^2d\tau \lesssim \int_{\tau_0}^{T_*}\|\su{r}\|_{L^2}\|\nabla\su{r}\|_{L^2}\|\srho{S-r}\|_1^2 d\tau\\
        &\le C(\rho_0,u_0,g,k)\tau_0^{-\frac{2r-1}{2}}\tau_0^{-r}\int_{\tau_0}^{T_*} \|\srho{S-r}\|_1^2 d\tau\\
        &\le C(\rho_0,u_0,g,k)t^{-(k-\frac{1}{2})} \le C(\rho_0,u_0,g,k)t^{-k}.
    \end{align*}
    Note that we applied \eqref{est:regest1} with index $2r-1$ to $\|\su{r}\|_{L^2}$, \eqref{est:regest1} with index $2r$ to $\|\nabla \su{r}\|_{L^2}$,
    and \eqref{est:regest2} with index $k-2r$ to the other $\|\srho{S-r}\|_1^2$. We also used $\tau_0 \le T_* \le 1$ in the final inequality. The proof is thus completed after we combine the estimates above.
    \end{proof}
    \end{lem}

    Using induction hypothesis at $k = 0$, we have $F \in L^1(0,T_*)$ with the bound $\|F\|_{L^1(0,T_*)} \le C(u_0,\rho_0, g)$. We may thus apply Gr\"onwall inequality to \eqref{Senergyodd} on time interval $[\tau_0, t]$, where $\tau_0$ is selected as in Lemma \ref{lem1} above. Using the two lemmas above, we have
    \begin{align}
    E_S(t) &\le C(k)\left(E_S(\tau_0) + \sum_{r=1}^{S-1}\int_{\tau_0}^{t}G_r(\tau)d\tau\right)\exp\left(\|F\|_{L^1(0,T_*)}\right)\notag\\
    &\le C(\rho_0,u_0,g,k)t^{-k}, \label{regest1S}
    \end{align}
    where we recall that $T_*$ depends only on $\rho_0$. This verifies \eqref{est:regest1}. To verify \eqref{est:regest2}, we integrate \eqref{Senergyodd} on interval $[t, T_*]$, which yields:
    $$
        \int_t^{T_*}\left(\|\nabla\srho{S}(\tau)\|_{L^2}^2 + \|\nabla\su{S}(\tau)\|_{L^2}^2\right)d\tau \le E_S(t) +C(k)\bigg(\int_t^{T_*}F(\tau)E_S(\tau)d\tau + \sum_{r=1}^{S-1}\int_t^{T_*}G_r(\tau)d\tau\bigg).
    $$
    Using \eqref{regest1S}, Lemma \ref{lem2}, and the fact that $\frac{t}{2} <\tau_0 < t$, we can estimate the above by:
    \begin{align*}
        \int_t^{T_*}(\|\nabla\srho{S}(\tau)\|_{L^2}^2 &+ \|\nabla\su{S}(\tau)\|_{L^2}^2)d\tau \le C(\rho_0,u_0,g,k)t^{-k} \\
        &+ C(\rho_0,u_0,g,k)(t^{-k}\|F\|_{L^1} + t^{-k})\\
        &\le C(\rho_0,u_0,g,k)t^{-k}.
    \end{align*}
    This concludes the proof of \eqref{est:regest2} with $l = S$.

    \textbf{Step 2: show \eqref{est:regest1}, \eqref{est:regest2} with $l < S$.}
     We will show how we obtain the case when $l = S-1$. Then the rest just follows from another induction on $l = 1,\hdots, S$ backwards.

    We may rewrite the equations \eqref{seqn} with $s = S-1$ as
    \begin{subequations}
    \label{seqn2}
    \begin{align}
        - \Delta \srho{S-1} &= -\srho{S} - \sum_{r=0}^{S-1}{S-1 \choose r}\bigg[\su{r}\cdot\nabla \srho{S-1-r}\notag + \srho{S-1-r}\srho{r} + \nabla\srho{S-1-r} \cdot \nabla (-\Delta)^{-1}(\srho{r})\bigg]\notag\\
        &= -\srho{S} + R_1 \label{srhoeqn2}\\
        \calA\su{S-1} &= -\su{S} + g\bP(\srho{S-1}e_z) = -\su{S} + R_2 \label{sueqn2}
    \end{align}
    \end{subequations}
    Here, $R_1,R_2$ are the remainder terms which are essentially of lower order. We will see that these terms can be treated by the induction hypothesis on $k$. To illustrate this, we show that the following estimates hold:
    \begin{lem}
    \label{lem3}
    For any $t \in (0,T_*]$,
    $$
    t^{k-\frac{1}{4}}(\|R_1(t)\|_{L^2}^2 + \|R_2(t)\|_{L^2}^2) \le C(\rho_0, u_0, g,k),
    $$
    $$
    t^{k-\frac{1}{4}}\int_{t}^{T_*}\left(\|R_1(\tau)\|_1^2 + \|R_2(\tau)\|_1^2\right)d\tau \le C(\rho_0, u_0, g,k).
    $$
    \begin{proof}
    First, it is straightforward to obtain the following bounds for $R_2$ by directly imposing the induction hypothesis at index $k-1$:
    \begin{equation}
    \label{est:lem3aux1}
    t^{k-1}\|R_2(t)\|_{L^2}^2 + t^{k-1}\int_t^{T_*}\|R_2(t)\|_{1}^2 dt \le C(\rho_0,u_0,g,k).
    \end{equation}
    Prior to estimating $R_1$, we first need an improved bound for $\|u\|_2$: invoking \eqref{est:lem3aux1} with $k=1$, we have
    $$
    \|R_2(t)\|_{L^2}^2 \le C(\rho_0, u_0, g).
    $$
    Since $S = 1$ when $k = 1$ by definition, we apply the Stokes estimate to \eqref{sueqn2} with $S = 1$ to see that
    \begin{equation}
    \label{est:lem3uh2}
    \|u\|_2^2 \lesssim \|\p_t u\|_{L^2}^2 + \|R_2\|_{L^2}^2 \le C(\rho_0, u_0, g)(t^{-1} + 1) \le C(\rho_0, u_0, g)t^{-1},
    \end{equation}
    where we used Step 1 with $k = 1$ above.
     Now, we are ready to estimate $R_1$. We first note that it involves $3$ typical terms, namely

    $$
    R_{11}^r:=\su{r}\cdot\nabla\srho{S-1-r},\;R_{12}^r:=\srho{S-1-r}\srho{r},\;R_{13}^r:=\nabla\srho{S-1-r} \cdot \nabla (-\Delta)^{-1}(\srho{r}),
    $$

    where $0\le r \le S-1$. We will prove suitable bounds for $R_{11}^r$, and the rest can be bounded more easily since these terms involve fewer derivatives.
    If $1 \le r \le S-1$, then by H\"older inequality:
    \begin{align*}
    \|R_{11}^r\|_{L^2}^2 &\le \|\su{r}\|_{L^6}^2\|\nabla\srho{S-1-r}\|_{L^3}^2 \lesssim \|\su{r}\|_1^2\|\srho{S-1-r}\|_1\|\srho{S-1-r}\|_2\\
    &\le C(\rho_0, u_0, g,k)t^{-2r}t^{-\frac{k-2r-1}{2}}t^{-\frac{k-2r}{2}}\\
    &\le C(\rho_0, u_0, g,k)t^{-(k-\frac12)},
    \end{align*}
    where we used \eqref{est:regest1} at indices $2r, k-2r-1,k-2r$ respectively.\\
    If $r = 0$, then we observe that $R_{11}^0 = u\cdot \nabla\srho{S-1}$. We estimate as follows:
    \begin{align*}
    \|R_{11}^0\|_{L^2}^2 &\le \|u\|_{L^\infty}^2\|\srho{S-1}\|_1^2 \le \|u\|_{L^2}^{1/2}\|u\|_{2}^{3/2}\|\srho{S-1}\|_1^2 \\
    &\le C(\rho_0,u_0,g,k)t^{-3/4}t^{-(k-1)} = C(\rho_0,u_0,g,k)t^{-(k-1/4)}
    \end{align*}
    where we used Agmon's inequality in 3D:
    $$
    \|u\|_{L^\infty}^2 \lesssim \|u\|_{L^2}^{1/2}\|u\|_{2}^{3/2}
    $$
    in the second inequality. We also invoked \eqref{est:regest1} with index $0$ to estimate $\|u\|_{L^2}$, \eqref{est:regest1} with index $k-1$ to bound $\|\srho{S-1}\|_{1}$, and \eqref{est:lem3uh2} to control $\|u\|_2$.

    Turning to the second inequality, since $\p_t^r u = 0$ on $\p\Omega$, then we can invoke Poincar\'e inequality to obtain:
    \begin{align*}
    \int_t^{T_*}\|R_{11}^r\|_1^2d\tau &\lesssim \int_t^{T_*}\|\nabla R_{11}^r\|_{L^2}^2d\tau\\
    &\lesssim \int_t^{T_*}\left( \|\nabla\su{r}\cdot\nabla\srho{S-1-r}\|_{L^2}^2 + \|\su{r}\cdot\nabla^2\srho{S-1-r}\|_{L^2}^2 \right) d\tau\\
    &=: R_{111}^r + R_{112}^r.
    \end{align*}
    If $1 \le r \le S-1$, using H\"older inequality and Gagliardo-Nirenberg-Sobolev inequalities, we can estimate $R_{111}^r$ by
    \begin{align*}
        R_{111}^r &\le \int_t^{T_*}\|\nabla\su{r}\|_{L^3}^2\|\nabla\srho{S-1-r}\|_{L^6}^2d\tau \lesssim \int_t^{T_*}\|\nabla\su{r}\|_{L^2}\|\nabla^2\su{r}\|_{L^2}\|\nabla\srho{S-1-r}\|_1^2d\tau\\
        &\le C(\rho_0, u_0, g,k)t^{-r}t^{-(k-2r)}\int_t^{T_*}\|\nabla^2\su{r}\|_{L^2}\|\nabla\srho{S-1-r}\|_1d\tau\\
        &\le C(\rho_0, u_0, g,k)t^{-r}t^{-\frac{k-2r}{2}}t^{-r}t^{-\frac{k-2r-1}{2}} = C(\rho_0, u_0, g,k)t^{-(k-\frac{1}{2})}.
    \end{align*}
    If $r = 0$, then we apply H\"older inequality and a Gagliardo-Nirenberg-Sobolev inequality to estimate that
    \begin{align*}
    R_{111}^0 &\le \int_t^{T_*} \|\nabla u\|_{L^3}^2 \| \nabla\srho{S-1}\|_{L^6}^2 d\tau \le \int_t^{T_*} \|u\|_1\|u\|_2\|\srho{S-1}\|_2^2 d\tau\\
    &\le C(\rho_0,u_0,g,k)t^{-1/2}\int_t^{T_*}\|\srho{S-1}\|_2^2 d\tau\\
    &\le C(\rho_0,u_0,g,k)t^{-1/2}t^{-(k-1)} \le C(\rho_0,u_0,g,k)t^{-(k-1/2)},
    \end{align*}
    where we used the bound \eqref{est:lem3uh2} and \eqref{est:regest2} with index $0$ and $k-1$ above. \\
    Now we discuss the bound for $R_{112}^r$. For $1 \le r \le S-1$, we have
    \begin{align*}
    R_{112}^r &\le \int_t^{T_*}\|\su{r}\|_{L^3}^2\|\nabla^2\srho{S-1-r}\|_{L^6}^2 d\tau\\
    &\le \int_t^{T_*}\|\su{r}\|_{L^2}\|\su{r}\|_1\|\srho{S-1-r}\|_3^2 d\tau\\
    &\le C(\rho_0, u_0, g, k)t^{-\frac{2r-1}{2}}t^{-r}\int_t^{T_*}\|\srho{S-1-r}\|_3^2 d\tau\\
    &\le C(\rho_0, u_0, g, k)t^{-(k-1/2)},
    \end{align*}
    where we used \eqref{est:regest1} with indices $2r-1$ and $2r$ in the third inequality, and \eqref{est:regest2} with index $k - 2r$ in the last inequality. If $r = 0$, then we take advantage of Agmon's inequality in 3D again to obtain:
    \begin{align*}
    R_{112}^0 &\le \int_t^{T_*}\|u\|_{L^\infty}^2\|\nabla^2\srho{S-1}\|_{L^2}^2 d\tau\\
    &\le \int_t^{T_*}\|u\|_{L^2}^{1/2}\|u\|_{2}^{3/2}\|\srho{S-1}\|_2^2 d\tau\\
    &\le C(\rho_0,u_0,g,k)t^{-3/4}\int_t^{T_*}\|\srho{S-1}\|_2^2 d\tau\\
    &\le C(\rho_0,u_0,g,k)t^{-3/4}t^{-(k-1)}\\
    &= C(\rho_0,u_0,g,k)t^{-(k-1/4)}.
    \end{align*}
    Therefore, we arrive at the bound:
    $$
    \int_t^{T_*}\|R_{11}^r\|_1^2d\tau \le C(\rho_0,u_0,g,k)t^{-(k-1/4)}.
    $$
    Proceeding in a similar fashion, we can acquire similar bounds for the $R_{12}^r$ and $R_{13}^r$. The proof of the lemma is thus complete after we sum up the estimates above.
    \end{proof}
    \end{lem}
    By Step 1, we know that for any $t \in (0,T_*]$,
    $$
    t^k\left(\|\srho{S}(t)\|_{L^2}^2 + \|\su{S}(t)\|_{L^2}^2\right) \le C(\rho_0, u_0, g,k),
    $$
    $$
    t^k\int_t^{T_*}\left(\|\srho{S}(\tau)\|_1^2 + \|\su{S}(\tau)\|_1^2\right)d\tau \le C(\rho_0, u_0, g,k).
    $$
    Combining Lemma \ref{lem3} with equations \eqref{srhoeqn2}, \eqref{sueqn2}, and using elliptic estimates, we conclude that for $t \in (0,T_*]$
    $$
    \|\srho{S-1}(t)\|_2^2 + \|\su{S-1}(t)\|_2^2 \le C(\rho_0, u_0, g,k)t^{-k},
    $$
    $$
    \int_t^{T_*}\left(\|\srho{S-1}(\tau)\|_3^2 + \|\su{S-1}(\tau)\|_3^2\right)d\tau \le C(\rho_0, u_0, g,k)t^{-k},
    $$
    which finishes the case when $l = S-1$. The rest will follow from an induction in $l$, and we omit the details here. Hence, we have concluded the case where $k$ is odd.

    \item \textbf{$k$ is even}. Since we have proved the $k=0$ case, we may write $k = 2S,$ $S \ge 1$, and define
    \[ \tilde{E}_s(t) = \|\nabla\srho{s}\|_{L^2}^2 + \|\nabla\su{s}\|_{L^2}^2 \]
     for $0 \le s \le S$. Notice that $\tilde{E}_s(t) \sim \|\srho{s}\|_1^2 + \|\su{s}\|_1^2$ in view of the Poincar\'e inequality.
     The scheme of the proof in this case is the same double induction argument (in forward $k$ and for each $k$ backwards in $l$), and we will follow the same outline as in the odd case.
     Considering \eqref{seqn} for $s = 1,\hdots, S$, we test \eqref{srhoeqn}, \eqref{sueqn} with $s= S$ by $-\Delta\srho{S},\calA\su{S}$ respectively, which yields:
    \begin{align*}
        \frac12\frac{d}{dt}\|\nabla\srho{S}\|_{L^2}^2 + \|\Delta \srho{S}\|_{L^2}^2 &= \sum_{r=0}^S{S\choose r}(\tilde{I}_r + \tilde{J}_r + \tilde{K}_r),\\
        \frac12\frac{d}{dt}\|\nabla \su{S}\|_{L^2}^2 + \|\calA\su{S}\|_{L^2}^2 &= g\int_\Omega \calA\su{S}\bP(\srho{S}e_z) \le \frac{g}{2}\tilde{E}_S,
    \end{align*}
    where for $r = 0,\hdots, S,$
    $$
    \tilde{I}_r = \int_\Omega \Delta\srho{S} (\su{r}\cdot \nabla)\srho{S-r},\; \tilde{J}_r = \int_\Omega \Delta\srho{S}\srho{r} \srho{S-r},
    $$
    $$
    \tilde{K}_r = \int_\Omega \Delta\srho{S}\nabla\srho{S-r}\cdot \nabla(-\Delta)^{-1}\srho{r}.
    $$

        To estimate $\tilde{I}_r$, we first observe that
        \begin{align*}
            \tilde{I}_r &\le \|\Delta \srho{S}\|_{L^2}\|\su{r}\|_{L^6}\|\nabla\srho{S-r}\|_{L^3} \le \|\Delta \srho{S}\|_{L^2}\|\su{r}\|_1\|\nabla\srho{S-r}\|_{L^2}^{1/2}\|\nabla^2\srho{S-r}\|_{L^2}^{1/2}.
        \end{align*}
        Hence if $r \ne 0$, we may estimate $\tilde{I}_r$ as follows: for any $\epsilon > 0$,
        $$
        \tilde{I}_r \le \epsilon\|\Delta \srho{S}\|_{L^2}^2 + C(\epsilon)\|\srho{S-r}\|_1\|\srho{S-r}\|_2\|\su{r}\|_1^2.
        $$
        If $r = 0,$ we estimate
        \begin{align*}
            \tilde{I}_0 &= \int_\Omega \Delta\srho{S}(u\cdot\nabla)\srho{S} \le \|\Delta\srho{S}\|_{L^2}\|u\|_{L^\infty}\|\nabla\srho{S}\|_{L^2}\le \epsilon\|\Delta \srho{S}\|_{L^2}^2 + C(\epsilon)\|u\|_2^2\|\nabla\srho{S}\|_{L^2}^2.
        \end{align*}

        To estimate $\tilde{J}_r$, we have:
        $$
        \tilde{J}_r \le \epsilon\|\Delta \srho{S}\|_{L^2}^2 + C(\epsilon)\|\srho{r}\|_1^2\|\srho{S-r}\|_1^2,
        $$
        where $\epsilon > 0$. Finally, to estimate of $\tilde{K}_r$, we evoke elliptic estimate to obtain
        \begin{align*}
        \tilde{K}_r &\le \|\Delta \srho{S}\|_{L^2}\|\srho{S-r}\|_1\|\nabla(-\Delta)^{-1}\srho{r}\|_{L^\infty} \lesssim \|\Delta \srho{S}\|_{L^2}\|\srho{S-r}\|_1\|\nabla(-\Delta)^{-1}\srho{r}\|_2\\
        &\le \epsilon\|\Delta \srho{S}\|_{L^2}^2 + C(\epsilon)\|\srho{r}\|_1^2\|\srho{S-r}\|_1^2,
        \end{align*}
        for any $\epsilon > 0$. Combining the estimates above yields the following energy inequality: for $\tau \in (0,T_*)$,
    \begin{align}
        \label{Senergyeven}
        \frac{d\tilde{E}_S}{d\tau} + \|\srho{S}(\tau,\cdot)\|_2^2 + &\|\su{S}(\tau,\cdot)\|_2^2 \le C(k)\bigg[ \left(g+\|u\|_2^2 + \|\rho\|_2^2 \right) \tilde{E}_S(\tau)\notag\\
        &+\sum_{r=1}^{S-1}\big(\|\srho{S-r}\|_1\|\srho{S-r}\|_2\|\su{r}\|_1^2 + \|\srho{S-r}\|_1^2\|\srho{r}\|_1^2\big)\bigg]\notag \\
        &\le C(k)\left(\tilde{F}(\tau)\tilde{E}_S(\tau) + \sum_{r=1}^{S-1}\tilde{G}_r(\tau)\right),
    \end{align}
    where $\tilde{F} \in L^1(0,T_*)$ due to the induction hypothesis at $k = 0$. Now, we would like to follow the same plan as that in the odd case. This motivates us to prove lemmas similar to Lemma \ref{lem1}, \ref{lem2}, and \ref{lem3} adapted to the even case. First, we show the following lemma that parallels Lemma \ref{lem1}:
    \begin{lem}
    \label{lem1even}
    There exists $\tau_0 \in [t/2,t]$ such that $\tilde{E}_S(\tau_0) \le C(\rho_0,u_0,g,k)t^{-k}$.
    \end{lem}
    \begin{proof}
We consider \eqref{seqn} with $s = S-1$. In view of \eqref{sueqn}, we have
\begin{align*}
\|\su{S}\|_1^2 &\lesssim \|\calA\su{S-1}\|_1^2 + g^2\|\srho{S-1}\|_1^2 \le \|\su{S-1}\|_3^2 + g^2\|\srho{S-1}\|_1^2,
\end{align*}
for any $\tau \in [t/2,t]$. Integrating over $[t/2,t]$ and using \eqref{est:regest2} with index $k-1$, we obtain
\begin{equation}
\label{lem1ueqeven}
\int_{t/2}^t \|\su{S}(\tau)\|_1^2 d\tau \le \int_{t/2}^{T_*} \|\su{S}(\tau)\|_1^2 d\tau\le C(\rho_0,u_0,g,k)t^{1-k}.
\end{equation}

To estimate $\|\nabla \srho{S}\|_{L^2}$, we apply $\nabla$ to both sides of \eqref{srhoeqn} with $s = S-1$, and then use H\"older's inequality:
    \begin{align}
    \label{srhointereven}
        \|\nabla\srho{S}\|_{L^2}^2 &\lesssim \|\srho{S-1}\|_{3}^2 + \sum_{r=0}^{S-1}C(k)\bigg(\|\nabla(\su{r}\cdot\nabla \srho{S-r-1})\|_{L^2}^2\notag\\
        &+ \|\nabla(\srho{S-r-1}\srho{r})\|_{L^2}^2 + \|\nabla(\nabla\srho{S-r-1}\cdot\nabla(-\Delta)^{-1}(\srho{r}))\|_{L^2}^2 \bigg).
    \end{align}
    To save space, we only consider the most singular term, namely $\|\nabla(\su{r}\cdot\nabla \srho{S-r-1})\|_{L^2}^2$, and show that
    \begin{equation}
    \label{est:lem1evenaux1}
    \int_{t/2}^t \|\nabla(\su{r}\cdot\nabla \srho{S-r-1})\|_{L^2}^2 d\tau \le C(\rho_0, u_0, g, k)t^{1-k}.
    \end{equation}
    The estimates on the rest of the terms follow from a similar argument. To show \eqref{est:lem1evenaux1}, we first compute that
    $$
    \nabla(\su{r}\cdot\nabla \srho{S-r-1}) = \nabla\su{r}\cdot\nabla \srho{S-r-1} + \su{r}\cdot\nabla^2 \srho{S-r-1}.
    $$
    The first term can be estimated by
    \begin{align*}
    \|\nabla\su{r}\cdot\nabla \srho{S-r-1}\|_{L^2}^2 &\le \|\nabla\su{r}\|_{L^4}^2\|\nabla \srho{S-r-1}\|_{L^4}^2\\
    &\lesssim \|\nabla\su{r}\|_1^2\|\nabla \srho{S-r-1}\|_1^2\\
    &\le \|\su{r}\|_2^2\|\srho{S-r-1}\|_2^2.
    \end{align*}
    Similarly, we may estimate the second term above by
    \begin{align*}
    \|\su{r}\cdot\nabla^2 \srho{S-r-1}\|_{L^2}^2 &\lesssim \|\su{r}\|_1^2\|\srho{S-r-1}\|_3^2
    \end{align*}
    Thus for $r = 0,\hdots, S-1$,
    \begin{align*}
    \int_{t/2}^t \|\nabla(\su{r}\cdot\nabla \srho{S-r-1})\|_{L^2}^2 d\tau &\lesssim \int_{t/2}^t \left( \|\su{r}\|_2^2\|\srho{S-r-1}\|_2^2 + \|\su{r}\|_1^2\|\srho{S-r-1}\|_3^2 \right) d\tau\\
    &\le C(\rho_0,u_0,g,k)(t^{-(2r+1)}t^{-(k-2r-2)} + t^{-2r}t^{-(k-2r-1)})\\
    &\le C(\rho_0,u_0,g,k)t^{-(k-1)}
    \end{align*}
    where we applied \eqref{est:regest1} with index $2r+1$ to $\|\su{r}\|_2$, \eqref{est:regest2} with index $k-2r-2$ to $\|\srho{S-1-r}\|_2$, \eqref{est:regest1} with index $2r$ to $\|\su{r}\|_1$, and \eqref{est:regest2} with index $k-2r-1$ to $\|\srho{S-1-r}\|_3$. In a similar fashion, we can also obtain the following bound:
    $$
    \int_{t/2}^t \left[\|\nabla(\srho{S-r-1}\srho{r})\|_{L^2}^2 + \|\nabla(\nabla\srho{S-r-1}\cdot\nabla(-\Delta)^{-1}(\srho{r}))\|_{L^2}^2\right]d\tau \le C(\rho_0,u_0,g,k)t^{1-k}.
    $$
    Collecting the estimates above and combining with \eqref{srhointereven}, we have
    \begin{equation}
    \label{lem1rhoeqeven}
    \int_{t/2}^t \|\nabla\srho{S}(\tau)\|_{L^2}^2d\tau \le C(\rho_0,u_0,g,k)t^{1-k}.
    \end{equation}
    Combining \eqref{lem1ueqeven} and \eqref{lem1rhoeqeven}, we have
    $$
    \int_{t/2}^t \tilde{E}_S(\tau) d\tau \le C(\rho_0,u_0,g,k)t^{1-k}.
    $$
    By mean value theorem, we can find a $\tau_0 \in (t/2,t)$ such that
    $$
    \tilde{E}_S(\tau_0)\le C(\rho_0,u_0,g,k)t^{-k},
    $$
    and this concludes the proof.
\end{proof}

Then we show a counterpart to Lemma \ref{lem2}.
    \begin{lem}
    \label{lem2even}
    Let $\tau_0$ be chosen as in Lemma \ref{lem1even}. Then for any $r = 1,\hdots, S-1$, we have
    $$
    \int_{\tau_0}^{T_*}\tilde{G}_r(\tau)d\tau \le C(\rho_0, u_0, g, k)t^{-(k-\frac12)}.
    $$
    \end{lem}
    \begin{proof}
    Observe that for $r = 1,\hdots, S-1$,
    $$
    \tilde{G}_r = \|\srho{S-r}\|_1\|\srho{S-r}\|_2\|\su{r}\|_1^2 + \|\srho{S-r}\|_1^2\|\srho{r}\|_1^2 =: \tilde{G}_r^1 + \tilde{G}_r^2.
    $$
   To estimate $\tilde{G}_r^2$, apply \eqref{est:regest1} with index $2r$ to $\|\srho{r}\|_1^2$ and \eqref{est:regest2} with index $k-2r-1$
   to $\|\srho{S-r}\|_1^2:$
    \begin{align*}
    \int_{\tau_0}^{T_*}\tilde{G}_r^2(\tau)d\tau &\le C(\rho_0, u_0, g,k)\tau_0^{-2r}\int_{\tau_0}^{T_*}\|\srho{S-r}\|_1^2 d\tau\\
    &\le  C(\rho_0, u_0, g,k)\tau_0^{-2r}\tau_0^{-(k-2r-1)}\\
    &\le C(\rho_0, u_0, g,k)t^{-(k-1)}.
    \end{align*}
    To treat the term $\tilde{G}_r^1$, we use the induction hypothesis to obtain that
    \begin{align*}
        \int_{\tau_0}^{T_*}\tilde{G}_r^1(\tau)d\tau &= \int_{\tau_0}^{T_*}\|\srho{S-r}\|_1\|\su{r}\|_1\|\srho{S-r}\|_2\|\su{r}\|_1d\tau\\
        &\le C(\rho_0, u_0, g,k)\tau_0^{-\frac{k-2r}{2}}\tau_0^{-r}\int_{\tau_0}^{T_*}\|\srho{S-r}\|_2\|\su{r}\|_1d\tau\\
        &\le C(\rho_0, u_0, g,k)\tau_0^{-\frac{k-2r}{2}}\tau_0^{-r}\tau_0^{-\frac{k-2r}{2}}\tau_0^{-\frac{2r-1}{2}}\\
        &\le C(\rho_0, u_0, g,k)t^{-(k-\frac12)}
    \end{align*}
    Summing up the two estimates above completes the proof of the lemma.
    \end{proof}
Finally, we show a result parallel to Lemma \ref{lem3}.

    \begin{lem}
    \label{lem3even}
    For any $t \in (0,T_*]$,
    $$
    t^{k-\frac{1}{4}}(\|R_1(t)\|_1^2 + \|R_2(t)\|_1^2) \le C(\rho_0, u_0, g,k),
    $$
    $$
    t^{k-\frac{1}{4}}\int_{t}^{T_*}\left(\|R_1(\tau)\|_2^2 + \|R_2(\tau)\|_2^2\right)d\tau \le C(\rho_0, u_0, g,k),
    $$
    where $R_1, R_2$ are defined as in \eqref{seqn2}.
    \end{lem}
    \begin{proof}
    First, we note that by applying \eqref{est:regest1} and \eqref{est:regest2} with index $k-2$, we have
    $$
    t^{k-2}\left(\|R_2(t)\|_1^2 + \int_t^{T_*}\|R_2(\tau)\|_2^2 d\tau \right) \le C(\rho_0, u_0, g, k).
    $$
    Then it suffices for us to show suitable bounds for $R_1$. Similarly to the proof of Lemma \ref{lem3}, we need to control the following typical terms:
    $$
    R_{11}^r:=\su{r}\cdot\nabla\srho{S-1-r},\;R_{12}^r:=\srho{S-1-r}\srho{r},\;R_{13}^r:=\nabla\srho{S-1-r} \cdot \nabla (-\Delta)^{-1}(\srho{r}),
    $$
    For simplicity, we will only consider in detail the most singular term $R_{11}^r$, as the estimates for the remianing two terms will follow similarly.

    We first study $\|R_{11}^r\|_1^2$, and it suffices for us to consider the leading order contribution i.e. $\|\nabla R_{11}^r\|_{L^2}^2$. Recall from the proof of Lemma \ref{lem3} that
    $$
    \|\nabla R_{11}^r\|_{L^2}^2 \lesssim \|\nabla\su{r}\cdot\nabla\srho{S-1-r}\|_{L^2}^2 + \|\su{r}\cdot\nabla^2\srho{S-1-r}\|_{L^2}^2 =: R_{111}^r + R_{112}^r.
    $$
    To treat $R_{111}^r$, we see that for any $0 \le r \le S-1$ , an application of H\"older inequality, Sobolev embedding, and Gagliardo-Nirenberg Sobolev inequality yields:
    \begin{align*}
    R_{111}^r &\le \|\nabla \su{r}\|_{L^3}^2\|\nabla\srho{S-r-1}\|_{L^6}^2\\
    &\lesssim \|\nabla \su{r}\|_{L^2}\|\nabla \su{r}\|_1\|\nabla\srho{S-r-1}\|_{1}^2\\
    &\lesssim \|\su{r}\|_1\|\su{r}\|_2\|\srho{S-r-1}\|_{2}^2\\
    &\le C(\rho_0, u_0, g, k)t^{-r}t^{-\frac{2r+1}{2}}t^{-(k-2r-1)}\\
    &\le C(\rho_0, u_0, g, k)t^{-(k-1/2)},
    \end{align*}
    where we used \eqref{est:regest1} with indices $2r$, $2r+1$, $k-2r - 1$ respectively in the second to the last inequality above. To treat $R_{112}^r$, we first discuss the case when $1 \le r \le S-1$:
    \begin{align*}
    R_{112}^r &\le \|\su{r}\|_{L^3}^2\|\nabla^2\srho{S-r-1}\|_{L^6}^2\\
    &\le \|\su{r}\|_{L^2}\|\su{r}\|_1\|\srho{S-r-1}\|_3^2\\
    &\le C(\rho_0, u_0, g, k)t^{-\frac{2r-1}{2}}t^{-r}t^{-(k-2r)}\\
    &\le C(\rho_0, u_0, g, k)t^{-(k-1/2)},
    \end{align*}
    where we used \eqref{est:regest1} with index $2r-1$, $2r$, $k-2r$ respectively in the second to the last inequality above. In the case where $r = 0$, we instead estimate as follows using Agmon's inequality:
    \begin{align*}
    R_{112}^0 &\le \|u\|_{L^\infty}^2\|\nabla^2\srho{S-1}\|_{L^2}^2\\
    &\lesssim \|u\|_{L^2}^{1/2}\|u\|_2^{3/2}\|\srho{S-1}\|_2^2\\
    &\le C(\rho_0, u_0, g, k)t^{-\frac{3}{4}}t^{-(k-1)}\\
    &\le C(\rho_0, u_0, g, k)t^{-(k-1/4)},
    \end{align*}
    where we used \eqref{est:regest1} with index $0$, $1$, and $k-1$ in the third inequality. Combining the estimates above yields
    $$
    t^{k-1/4}\|R_{11}^r(t)\|_1^2 \le C(\rho_0, u_0, g, k).
    $$

    Now we shall study $\|R_{11}^r\|_2^2$. We still consider the leading order contribution, namely $\|\nabla^2 R_{11}^r\|_{L^2}^2$. A straightforward computation yields:
    \begin{align*}
    \|\nabla^2 R_{11}^r\|_{L^2}^2 &\lesssim \|\nabla^2\su{r} \cdot \nabla\srho{S-1-r}\|_{L^2}^2 + \|\nabla\su{r} \cdot \nabla^2\srho{S-1-r}\|_{L^2}^2 + \|\su{r} \cdot \nabla^3\srho{S-1-r}\|_{L^2}^2\\
    & =: \tilde{R}_{111}^r + \tilde{R}_{112}^r + \tilde{R}_{113}^r.
    \end{align*}
    To control $\tilde{R}_{111}^r$, we have for any $t \in (0,T_*]$:
    \begin{align*}
    \tilde{R}_{111}^r &\le \|\nabla^2\su{r}\|_{L^3}^2 \|\nabla\srho{S-1-r}\|_{L^6}^2\\
    &\lesssim \|\nabla^2\su{r}\|_{L^2}\|\nabla^2\su{r}\|_1\|\nabla\srho{S-1-r}\|_1^2\\
    &\lesssim \|\su{r}\|_{2}\|\su{r}\|_3\|\srho{S-1-r}\|_2^2\\
    &\le C(\rho_0, u_0, g, k)t^{-\frac{2r+1}{2}}t^{-\frac{k-2r-1}{2}}\|\su{r}\|_3\|\srho{S-1-r}\|_2\\
    &= C(\rho_0, u_0, g, k)t^{-\frac{k}{2}}\|\su{r}\|_3\|\srho{S-1-r}\|_2
    \end{align*}
    where we used \eqref{est:regest1} with indices $2r+1$ and $k-2r-1$ above. Integrating in time, we obtain
    \begin{align*}
    \int_t^{T_*}\tilde{R}_{111}^r d\tau &\le C(\rho_0, u_0, g, k)t^{-\frac{k}{2}}\int_t^{T_*}\|\su{r}\|_3\|\srho{S-1-r}\|_2 d\tau\\
    &\le C(\rho_0, u_0, g, k)t^{-\frac{k}{2}}\left(\int_t^{T_*}\|\su{r}\|_3^2 d\tau\right)^{1/2}\left(\int_t^{T_*}\|\srho{S-1-r}\|_2^2 d\tau\right)^{1/2}\\
    &\le C(\rho_0, u_0, g, k)t^{-\frac{k}{2}}t^{-\frac{2r+1}{2}}t^{-\frac{k-2r-2}{2}}\\
    &\le C(\rho_0, u_0, g, k)t^{-(k-1/2)},
    \end{align*}
    where we used \eqref{est:regest2} with indices $2r+1$ and $k-2r-2$. A similar argument switching the estimates of $u$ and $\rho$ terms yields the same bound for $\tilde{R}_{112}^r$:
    $$
     \int_t^{T_*}\tilde{R}_{112}^r d\tau \le C(\rho_0, u_0, g, k)t^{-(k-1/2)}.
    $$
    To estimate $\tilde{R}_{113}^r$, we first note that for $1 \le r \le S-1$,
    \begin{align*}
    \tilde{R}_{113}^r &\le \|\su{r}\|_{L^3}^2\|\nabla^3\srho{S-r-1}\|_{L^6}^2\\
    &\le \|\su{r}\|_{L^2}\|\su{r}\|_1\|\srho{S-r-1}\|_4^2\\
    &\le C(\rho_0, u_0, g, k)t^{-\frac{2r-1}{2}}t^{-r}\|\srho{S-r-1}\|_4^2
    \end{align*}
    where we used \eqref{est:regest1} with index $2r-1$ and $2r$ respectively in the last inequality above. Integrating in time, we get:
    \begin{align*}
    \int_t^{T_*}\tilde{R}_{113}^r d\tau &\le C(\rho_0, u_0, g, k)t^{-\frac{2r-1}{2}}t^{-r}\int_t^{T_*} \|\srho{S-r-1}\|_4^2 d\tau\\
    &\le C(\rho_0, u_0, g, k)t^{-\frac{2r-1}{2}}t^{-r}t^{k-2r}\\
    &=  C(\rho_0, u_0, g, k)t^{-(k-1/2)},
    \end{align*}
    where we used \eqref{est:regest2} with index $k - 2r$ above. In the case where $r = 0$, we instead estimate as follows using Agmon's inequality:
    \begin{align*}
    \tilde{R}_{113}^0 &\le \|u\|_{L^\infty}^2\|\nabla^3\srho{S-1}\|_{L^2}^2\\
    &\lesssim \|u\|_{L^2}^{1/2}\|u\|_2^{3/2}\|\srho{S-1}\|_3^2\\
    &\le C(\rho_0, u_0, g, k)t^{-\frac{3}{4}}\|\srho{S-1}\|_3^2
    \end{align*}
    where we used \eqref{est:regest1} with indices $0$ and $1$. Integrating in time yields:
    \begin{align*}
    \int_t^{T_*}\tilde{R}_{113}^0 d\tau &\le C(\rho_0, u_0, g, k)t^{-\frac{3}{4}}\int_t^{T_*}\|\srho{S-1}\|_3^2 d\tau\\
    &\le C(\rho_0, u_0, g, k)t^{-\frac{3}{4}}t^{-(k-1)}\\
    &= C(\rho_0, u_0, g, k)t^{-(k - 1/4)},
    \end{align*}
    where we used \eqref{est:regest2} with index $k-1$ above. Collecting the estimates above yields
    $$
    t^{k-1/4}\int_t^{T_*}\|\nabla^2 R_{11}^r\|_{L^2}^2 \le C(\rho_0, u_0, g, k).
    $$
    The proof is therefore completed.
    \end{proof}

     From this point on, a similar argument to the odd case combining with the three lemmas above finishes the proof for the even case. We leave details for the interested reader.
\end{enumerate}
\end{proof}
Finally, by combining Corollary \ref{cor:lowreg}, Proposition \ref{prop:highreg}, and using Sobolev embeddings, we infer the existence of a regular solution to \eqref{eq:ksstokes}

\subsection{Uniqueness}\label{subsect:uniqueness}
In this section, we show the uniqueness of regular solutions to problem \eqref{eq:ksstokes}.

\begin{prop}
\label{prop:uniqueness}
Given initial data $\rho_0 \in H_0^1$, $u_0 \in V$, there exist a $T_* > 0$ depending only on $\rho_0$, and a unique regular solution to problem \eqref{eq:ksstokes} on $[0, T_*]$.
\begin{proof}
Assume $(\rho_i, u_i)$, $i = 1,2$, to be two regular solutions to problem \eqref{eq:ksstokes} with initial condition $\rho_0, u_0$. Write $r = \rho_1 - \rho_2$, $w = u_1 - u_2$. A straightforward computation yields the following equations satisfied by $\rho, u$:
$$
\begin{cases}
    \p_t r - \Delta r + u_1\cdot\nabla r + w\cdot\nabla\rho_2 + \divv(r\nabla(-\Delta)^{-1}\rho_1-\rho_2\nabla(-\Delta)^{-1}r) = 0,\\
    \p_t w + \calA w = g\bP(r e_z),
    \end{cases}
$$
with boundary conditions $r|_{\p\Omega} = 0$, $w|_{\p\Omega} = 0$ and zero initial condition. Testing the $r$-equation by $r$, we obtain
\begin{align*}
    \frac{1}{2}\frac{d}{dt}\|r\|_{L^2}^2 + \|\nabla r\|_{L^2}^2 &= -\int_\Omega r u_1 \cdot\nabla r - \int_\Omega r (w\cdot \nabla \rho_2) + \int_\Omega r\nabla r\cdot \nabla(-\Delta)^{-1}\rho_1\\
    &- \int_\Omega \rho_2\nabla r \cdot \nabla(-\Delta)^{-1} r= I_1 + I_2 + I_3 + I_4.
\end{align*}
Using incompressibility of $u_1$, we immediately have $I_1 = 0$ via integration by parts. Using H\"older inequality and Sobolev embedding, we can estimate $I_2$ by:
$$
I_2 \le \|r\|_{L^2}\|w\|_{L^6}\|\nabla\rho_2\|_{L^3} \lesssim \|r\|_{L^2}\|w\|_{1}\|\rho_2\|_{2} \le \epsilon\|w\|_1^2 + C(\epsilon)\|\rho_2\|_2^2\|r\|_{L^2}^2
$$
for any $\epsilon > 0$. Using elliptic estimates, Sobolev embedding, and Gagliardo-Nirenberg-Sobolev inequalities,
we may estimate $I_3$ by:
\begin{align*}
I_3 &\le \|\nabla r\|_{L^2}\|r\|_{L^3}\|\nabla(-\Delta)^{-1}\rho_1\|_{L^6} \lesssim \|\nabla r\|_{L^2}\|r\|_{L^2}^{1/2}\|\nabla r\|_{L^2}^{1/2}\|\rho_1\|_{L^2}\\
&\lesssim \|\rho_1\|_{L^2}\|\nabla r\|_{L^2}^{3/2}\|\rho\|_{L^2}^{1/2}
\le \epsilon\|\nabla r\|_{L^2}^2 + C(\epsilon)\|\rho_1\|_{L^2}^4\|r\|_{L^2}^2.
\end{align*}
Similarly, we can estimate $I_4$ by
$$
I_4 \lesssim \|\rho_2\|_{L^\infty}\|r\|_{L^2}\|\nabla r\|_{L^2}\lesssim \|\rho_2\|_2\|r\|_{L^2}\|\nabla r\|_{L^2} \le \epsilon\|\nabla r\|_{L^2}^2 + C(\epsilon)\|\rho_2\|_2^2 \|r\|_{L^2}^2.
$$
On the other hand, we test the $w$-equation by $w$:
$$
\frac{1}{2}\frac{d}{dt}\|w\|_{L^2}^2 + \|\nabla w\|_{L^2}^2 = g\int_\Omega w\cdot r e_z \le \frac{1}{2}\|w\|_{L^2}^2 + \frac{g^2}{2}\|r\|_{L^2}^2.
$$
Consider $E(t) := \|w\|_{L^2}^2 + \|r\|_{L^2}^2$. Collecting the estimates above and choosing $\epsilon > 0$ to be sufficiently small, we have the following inequality:
$$
\frac{dE}{dt} \le C(\|\rho_2\|_2^2 + \|\rho_1\|_{L^2}^4 + g^2)E(t) =: Cf(t)E(t).
$$
Note that as $(\rho_i, u_i)$ are regular solutions for $i = 1,2$, we particularly have $\rho_1 \in C([0,T_*]; V)$ and $\rho_2 \in L^2((0,T_*); H^2 \cap V)$. Hence $f \in L^1(0,T_*)$. Since $(r,w)$ assumes zero initial condition, we have $E(0) = 0$. Then an application of Gr\"onwall's inequality implies
$$
E(t) = 0,\; t \in [0,T_*],
$$
and uniqueness is proved.
\end{proof}
\end{prop}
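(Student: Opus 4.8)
Since the existence half of the statement has already been obtained in Corollary \ref{cor:lowreg} and Proposition \ref{prop:highreg} (together with Sobolev embedding), the plan is to prove the uniqueness half by a standard stability estimate on the difference of two regular solutions. Given two regular solutions $(\rho_1,u_1)$ and $(\rho_2,u_2)$ of \eqref{eq:ksstokes} with the same data, I set $r = \rho_1 - \rho_2$ and $w = u_1 - u_2$. The first step is to write down the (linear-in-$(r,w)$) system satisfied by $(r,w)$: the dissipative and transport structure is inherited directly, while the chemotactic term must be split as $\rho_1\nabla(-\Delta)^{-1}\rho_1 - \rho_2\nabla(-\Delta)^{-1}\rho_2 = r\nabla(-\Delta)^{-1}\rho_1 + \rho_2\nabla(-\Delta)^{-1}r$, so that $\rho_1,\rho_2$ play the role of (regular) coefficients. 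One records that $r,w$ vanish on $\p\Omega$ and at $t=0$.

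The second step is an $L^2$ energy estimate: test the $r$-equation by $r$ and the $w$-equation (in the form \eqref{eq:stokes2}) by $w$, and add. Incompressibility of $u_1$ kills the transport term $\int_\Omega r\,(u_1\cdot\nabla r)$, and testing by $w$ produces the Stokes dissipation $\|\nabla w\|_{L^2}^2$. What remains on the right is $\int_\Omega r\,(w\cdot\nabla\rho_2)$, the two chemotactic contributions $\int_\Omega r\nabla r\cdot\nabla(-\Delta)^{-1}\rho_1$ and $\int_\Omega \rho_2\nabla r\cdot\nabla(-\Delta)^{-1}r$, and the buoyancy coupling $g\int_\Omega w\cdot r\,e_z$. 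I would estimate each by Hölder's inequality together with the Sobolev embeddings $H^1\hookrightarrow L^6$ and (in $d=3$) $H^2\hookrightarrow L^\infty$, the elliptic gain $\|\nabla(-\Delta)^{-1}f\|_{H^1}\lesssim\|f\|_{L^2}$, and the Gagliardo--Nirenberg interpolation $\|r\|_{L^3}\lesssim\|r\|_{L^2}^{1/2}\|\nabla r\|_{L^2}^{1/2}$, absorbing every super-linear occurrence of $\|\nabla r\|_{L^2}$ or $\|\nabla w\|_{L^2}$ into the dissipation via Young's inequality, and bounding the buoyancy term by $g\int_\Omega w\cdot r \le \tfrac12\|w\|_{L^2}^2 + \tfrac{g^2}{2}\|r\|_{L^2}^2$.

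The outcome should be a differential inequality $\frac{d}{dt}E(t) \le C f(t)\,E(t)$ for $E(t) := \|r(t)\|_{L^2}^2 + \|w(t)\|_{L^2}^2$, with coefficient $f(t) = \|\rho_2(t)\|_2^2 + \|\rho_1(t)\|_{L^2}^4 + g^2$. The decisive structural point is that $f\in L^1(0,T_*)$, which is exactly guaranteed by the definition of a regular solution: $\rho_2\in L^2((0,T_*);H^2\cap H^1_0)$ and $\rho_1\in C([0,T_*];H^1_0)$. Since $E(0)=0$, Grönwall's inequality forces $E\equiv 0$ on $[0,T_*]$, i.e.\ $\rho_1=\rho_2$ and $u_1=u_2$; combined with the existence result this gives the full statement (and hence completes the proof of Theorem \ref{thm:lwp}).

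I expect the main obstacle to be the chemotactic terms $\int_\Omega r\nabla r\cdot\nabla(-\Delta)^{-1}\rho_1$ and $\int_\Omega \rho_2\nabla r\cdot\nabla(-\Delta)^{-1}r$: at this level one controls $r$ only in $L^2$ (plus the dissipation $\|\nabla r\|_{L^2}$), so these terms must be distributed so that at most one factor of $\nabla r$ survives and its power stays below $2$ after Young's inequality, which forces reliance on the two-derivative smoothing of $(-\Delta)^{-1}$ and on interpolation. The remaining subtlety is simply checking the time-integrability of $f$, which is precisely where the $L^2_tH^2_x$ component of the regular-solution class is indispensable; the case $d=3$ is the tightest, and $d=2$ follows from the identical computation with more room.
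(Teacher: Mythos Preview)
Your proposal is correct and follows essentially the same approach as the paper: the same difference system for $(r,w)$, the same $L^2$ energy estimate, the same treatment of each term (incompressibility for the transport piece, H\"older/Sobolev/elliptic/GNS for the chemotactic pieces, Young for the buoyancy), and the same Gr\"onwall conclusion with the identical coefficient $f(t)=\|\rho_2\|_2^2+\|\rho_1\|_{L^2}^4+g^2$. Your identification of the chemotactic terms as the delicate step and of the $L^2_tH^2_x$ regularity as the source of $f\in L^1(0,T_*)$ matches the paper exactly.
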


\subsection{Regularity Criterion}
\label{sec:crit}
In this section, we aim to prove Theorem \ref{thm:L2criterion}. We first need the following fact on the monotonicity of $L^1$ norm of cell density $\rho$:
\begin{lem}
\label{lem:L1lem}
Assume $\Omega$ to be a smooth domain in either $\R^2$ or $\R^3$. Let $(\rho,u)$ be a smooth solution to problem \eqref{eq:ksstokes} on $[0,T]$. Suppose also that $\rho_0$ is nonnegative. Then for any $t \in [0,T]$, we have
$$
\frac{d}{dt}\|\rho(t)\|_{L^1} \le 0.
$$
\begin{proof}
First, we note that by parabolic maximum principle, we must have $\rho(t,x) \ge 0$ in $[0,T]\times \Omega$. Using \eqref{eq:ksstokes}, we compute that
\begin{align*}
    \frac{d}{dt}\|\rho(t,\cdot)\|_{L^1} &= \frac{d}{dt}\int_\Omega \rho(t,x)dx = \int_\Omega \left( -u\cdot\nabla\rho + \Delta\rho - \divv(\rho\nabla(-\Delta)^{-1}\rho)\right) dx\\
    &=\int_\Omega \divv(\nabla\rho - \rho\nabla(-\Delta)^{-1}\rho)dx = \int_{\p \Omega}\frac{\p \rho}{\p n} - \rho\frac{\p}{\p n}(-\Delta)^{-1}\rho dS\\
    &= \int_{\p \Omega}\frac{\p \rho}{\p n} dS,
\end{align*}
where $\frac{\p}{\p n}$ denotes the outward normal derivative and $dS$ denotes the surface unit.
We also used the incompressibility of $u$, divergence theorem, and the Dirichlet boundary condition in the derivation above. In view of parabolic maximum principle, we must have
$$
\frac{ \p\rho}{\p n}\big|_{\p \Omega} \le 0.
$$
Hence, we conclude that
$$
\frac{d}{dt}\|\rho(t,\cdot)\|_{L^1}\le 0,\; t\in[0,T].
$$
\end{proof}
\end{lem}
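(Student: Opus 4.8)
The plan is to exploit the conservative (divergence) structure of the right-hand side of the $\rho$-equation together with the two Dirichlet boundary conditions, reducing $\frac{d}{dt}\|\rho\|_{L^1}$ to a single boundary flux whose sign is then pinned down by the parabolic Hopf boundary-point lemma. The whole argument is short, so the proposal is essentially a complete plan with one genuinely nontrivial ingredient at the end.

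First I would establish that $\rho$ remains nonnegative on $[0,T]\times\Omega$. Viewing the $\rho$-equation as a linear parabolic equation for $\rho$ with the given smooth drift $u$ and the smooth potential $\nabla(-\Delta)^{-1}\rho$ as coefficients, the nonnegativity of the initial and boundary data together with the classical parabolic maximum principle yields $\rho\ge 0$ throughout. This identifies $\|\rho(t)\|_{L^1}=\int_\Omega \rho\,dx$, so that differentiating under the integral (justified by the smoothness of $\rho$) and substituting the equation gives
$$
\frac{d}{dt}\|\rho\|_{L^1}=\int_\Omega\Big(-u\cdot\nabla\rho+\Delta\rho-\divv(\rho\nabla(-\Delta)^{-1}\rho)\Big)\,dx.
$$

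Next I would rewrite the integrand as a pure divergence and apply the divergence theorem. Using $\divv u=0$, the advection term becomes $u\cdot\nabla\rho=\divv(\rho u)$, while the dissipation and chemotactic terms are already $\divv(\nabla\rho-\rho\nabla(-\Delta)^{-1}\rho)$. Converting to boundary integrals, the flux $\int_{\p\Omega}\rho\,(u\cdot n)\,dS$ vanishes since $\rho|_{\p\Omega}=0$ (and indeed $u|_{\p\Omega}=0$), and the chemotactic flux $\int_{\p\Omega}\rho\,\p_n(-\Delta)^{-1}\rho\,dS$ vanishes for the same reason. What survives is exactly
$$
\frac{d}{dt}\|\rho\|_{L^1}=\int_{\p\Omega}\frac{\p\rho}{\p n}\,dS,
$$
where $\p/\p n$ denotes the outward normal derivative.

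The main obstacle—and the only point requiring more than bookkeeping—is controlling the sign of this boundary flux. Here I would invoke the Hopf boundary-point lemma: since $\rho\ge 0$ in $\Omega$ and $\rho=0$ on $\p\Omega$, each boundary point is a minimum of $\rho$ touched from the interior, so the outward normal derivative satisfies $\p\rho/\p n\le 0$ pointwise on $\p\Omega$. The smoothness of $\Omega$ supplies the interior-ball condition that Hopf's lemma requires, and the smoothness of $(\rho,u)$ guarantees that the drift and zeroth-order coefficients are bounded on each compact time slice so the lemma applies. Integrating $\p\rho/\p n\le 0$ over $\p\Omega$ then yields $\frac{d}{dt}\|\rho\|_{L^1}\le 0$, as claimed.
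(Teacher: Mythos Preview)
Your proposal is correct and follows essentially the same route as the paper: nonnegativity via the parabolic maximum principle, reduction of $\frac{d}{dt}\|\rho\|_{L^1}$ to the single boundary flux $\int_{\p\Omega}\p\rho/\p n\,dS$ using the divergence structure and the Dirichlet conditions, and then the sign conclusion from the Hopf boundary-point lemma (which the paper phrases simply as ``parabolic maximum principle''). The only cosmetic difference is that you spell out the advection term as $\divv(\rho u)$ and name Hopf explicitly, but the argument is the same.
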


Now, we are ready to give a proof of the $L^2$ regularity criterion:
\begin{proof}[Proof of Theorem \ref{thm:L2criterion}]
Assume $(\rho, u)$ is a solution to \eqref{eq:ksstokes} with smooth data $(\rho_0, u_0)$. Let $T_0 > 0$ be its maximal lifespan.
\begin{enumerate}
    \item $d=2$. Suppose $T_0 < \infty$ and
$$
\lim_{t \nearrow T_0}\int_0^t\|\rho\|_{L^2}^2ds = M < \infty.
$$
First, we test the $u$-equation in \eqref{eq:ksstokes} by $\calA u$, which yields:
\begin{align*}
\frac{1}{2}\frac{d}{dt}\|\nabla u\|_{L^2}^2 + \|\calA u\|_{L^2}^2 &= g\int_\Omega \calA u\cdot \rho e_2 \le \frac{1}{2}\|\calA u\|_{L^2}^2 + \frac{g^2}{2}\|\rho\|_{L^2}^2,\; t \in [0,T_0).
\end{align*}
Rearranging the above inequality, using Gr\"onwall inequality, Theorem \ref{stokesest} and the assumption, we obtain that
\begin{equation}
\label{criteria1}
\sup_{t \in [0,T_0]}\|u\|_1^2 + \int_0^{T_0}\|u\|_{2}^2 ds \le \|u_0\|_1^2 + \frac{g^2M}{2} < \infty.
\end{equation}
Testing $\rho$-equation by $-\Delta \rho$, one obtains that
\begin{align*}
    \frac{1}{2}\frac{d}{dt}\|\nabla\rho\|_{L^2}^2 + \|\Delta \rho\|_{L^2}^2 &= \int_\Omega \Delta \rho u\cdot \nabla\rho - \int_\Omega \Delta \rho \rho^2 + \int_\Omega \Delta\rho \nabla\rho\cdot\nabla(-\Delta)^{-1}\rho\\
    &=: Q_1 + Q_2 + Q_3.
\end{align*}
Similarly to the estimate \eqref{H1rho}, we have for any $\epsilon > 0$
$$
Q_1 \le \|\Delta \rho\|_{L^2}\|\nabla\rho\|_{L^2}\|u\|_{L^\infty} \le \epsilon \|\Delta \rho\|_{L^2}^2 + C(\epsilon)\|\nabla\rho\|_{L^2}^2\|u\|_2^2,
$$
$$
Q_2 \le \epsilon \|\Delta \rho\|_{L^2}^2 + C(\epsilon)\|\rho\|_{L^4}^4 \le  \epsilon \|\Delta \rho\|_{L^2}^2 + C(\epsilon)\|\rho\|_{L^2}^2\|\nabla\rho\|_{L^2}^2.
$$

The term that we have to treat differently is $Q_3$. Using H\"older inequality, Sobolev embedding, and an $L^p$-based elliptic estimate, we have:
\begin{align*}
    Q_3 &\le \|\Delta \rho\|_{L^2}\|\nabla\rho\|_{L^3}\|\nabla(-\Delta)^{-1}\rho\|_{L^6}\lesssim \|\Delta \rho\|_{L^2}\|\nabla\rho\|_{L^3}\|\nabla(-\Delta)^{-1}\rho\|_{1,\frac{3}{2}}\\
    &\lesssim \|\Delta \rho\|_{L^2}\|\nabla\rho\|_{L^3}\|\rho\|_{L^{3/2}}\lesssim \|\Delta \rho\|_{L^2}\|\rho\|_{L^2}^{1/3}\|\nabla^2 \rho\|_{L^2}^{2/3}\|\rho\|_{L^1}^{2/3}\|\nabla \rho\|_{L^2}^{1/3}\\
    &\lesssim \|\Delta \rho\|_{L^2}^{5/3}\|\rho\|_{L^2}^{1/3}\|\rho\|_{L^1}^{2/3}\|\nabla \rho\|_{L^2}^{1/3}\le \epsilon\|\Delta \rho\|_{L^2}^2 + C(\epsilon)\|\rho\|_{L^2}^{2}\|\rho\|_{L^1}^{4}\|\nabla \rho\|_{L^2}^{2},
\end{align*}
where we used the Gagliardo-Nirenberg-Sobolev inequalities
$$
    \|f\|_{L^{3/2}} \le C\|f\|_{L^1}^{2/3}\|\nabla f\|_{L^2}^{1/3},\;
    \|\nabla f\|_{L^{3}} \le C\|f\|_{L^2}^{1/3}\|\nabla^2 f\|_{L^2}^{2/3},
$$
in the fourth inequality, and Young's inequality in the last step. By Lemma \ref{lem:L1lem}, we know that for $t \in [0,T_0)$, $\|\rho(t,\cdot)\|_{L^1} \le \|\rho_0\|_{L^1}$. Then we have
$$
Q_3 \le  \epsilon\|\Delta \rho\|_{L^2}^2 + C(\rho_0, \epsilon)\|\rho\|_{L^2}^{2}\|\nabla \rho\|_{L^2}^{2}.
$$

Choosing $\epsilon > 0$ sufficiently small and using the estimates of $L_i$ above, the $\rho$-estimate can be rearranged as:
\begin{equation}
\label{criteria2}
\frac{d}{dt}\|\nabla\rho\|_{L^2}^2 + \|\Delta \rho\|_{L^2}^2 \le C(\rho_0) (\|u\|_2^2 + \|\rho\|_{L^2}^2)\|\nabla\rho\|_{L^2}^2.
\end{equation}
Using Gr\"onwall inequality, we have:
\begin{align*}
\sup_{0\le t \le T_0}\|\nabla\rho(t,\cdot)\|_{L^2}^2 + \int_0^{T_0}\|\rho\|_2^2ds &\lesssim \|\nabla\rho_0\|_{L^2}^2\exp\left(C(\rho_0)\int_0^{T_0}(\|u\|_2^2 + \|\rho\|_{L^2}^2) ds\right)\\
& \le C(\rho_0,u_0, M, g, T_0),
\end{align*}
where we used the assumption, \eqref{criteria1}, and elliptic estimate. But this implies that one can extend the solution $(\rho, u)$ beyond the supposed lifespan $T_0$ by Theorem \ref{thm:lwp}. This yields a contradiction.

\item $d=3$. Suppose $T_0 < \infty$ and
$$
\lim_{t \nearrow T_0}\int_0^t\|\rho\|_{L^2}^4ds = M < \infty.
$$
Testing the $u$-equation in \eqref{eq:ksstokes} by $Au$ and deploying estimates similar to the $d=2$ case, we have
$$
\sup_{t \in [0,T_0]}\|\nabla u\|_{L^2}^2 + \int_0^{T_0}\|u\|_{2}^2 ds \le \|u_0\|_1^2 + \frac{g^2\sqrt{MT_0}}{2} < \infty.
$$
A derivation identical to \eqref{H1rho} yields:
$$
    \frac{d}{dt}\|\nabla \rho\|_{L^2}^2 + \|\Delta \rho\|_{L^2}^2 \lesssim \left(\|\rho\|_{L^2}^4 + \|u\|_{2}^2\right)\|\nabla \rho\|_{L^2}^2.
$$
Applying Gr\"onwall inequality and combining the two estimates above, we have for $t \in [0,T_0]$ that
$$
\|\nabla\rho(t,\cdot)\|_{L^2}^2 + \int_0^{T_0}\|\rho\|_2^2ds \lesssim \|\rho_0\|_1^2\exp\left(C(\rho_0)\int_0^{T_0}(\|\rho\|_{L^2}^4 + \|u\|_{2}^2)ds\right) \le C(\rho_0,u_0,M,g,T_0).
$$
And this contradicts the assumption that $T_0$ is the maximal lifespan in view of Theorem \ref{thm:lwp}.
\end{enumerate}
The proof is thus completed.
\end{proof}

\section{Proof of the Main Theorem: Suppression of Chemotactic Blowup}
In this section, our goal is to conclude Theorem \ref{thm:main} that \eqref{eq:ksstokes} is globally regular in the regime of sufficiently large $g$. In particular, we will see that the coupling of the Keller-Segel equation to the Stokes flow with sufficiently robust buoyancy term is regularizing, in the sense that the solution $\rho(t,x)$ approaches zero exponentially fast as $g$ is sufficiently large. For the rest of the section, $\Omega$ denotes any smooth, bounded domain in either 2D or 3D.

\subsection{Velocity Control}
In this subsection, we remark on two controls on the velocity field $u$ in \eqref{eq:ksstokes} that will be instrumental in our main proof. The first lemma is in fact a standard $H^1_{t,x}$ control of $u$, which is hidden in our proof of energy estimate in Proposition \ref{prop:highreg}. We give a brief derivation here for clarity.
\begin{lem}
\label{lem:velest}
Let $(\rho, u)$ be a regular solution to problem \eqref{eq:ksstokes} with initial data $\rho_0 \in H^1_0$, $u_0 \in V$. We have
\begin{equation}
    \label{est:mainuest}
    \|u\|_{H^1([0,T_*]\times \Omega)}^2 \le C(\rho_0,u_0)(g^2 + 1).
\end{equation}

\begin{proof}
In view of the estimate \eqref{est:L2u} in Proposition \ref{prop:l2bd}, it suffices to show that
\begin{equation}
    \label{aux:dtu}
    \int_0^{T_*}\|\p_t u(t)\|_{L^2}^2 dt \le C(\rho_0,u_0)(g^2 + 1).
\end{equation}
Testing the $u$ equation in \eqref{eq:ksstokes} by $\p_tu$, we have
\begin{align*}
    \|\p_tu\|_{L^2}^2 + \frac{1}{2}\frac{d}{dt}\|\nabla u\|_{L^2}^2 &= g\int_\Omega \p_tu\cdot (\rho e_z)dx \le \frac{1}{2}\|\p_tu\|_{L^2}^2 + \frac{g^2}{2}\|\rho\|_{L^2}^2,
\end{align*}
where we used incompressiblity of $u$ and Cauchy-Schwarz inequality above. Rearranging, integrating in time, and using \eqref{est:L2rho} we obtain
\begin{align*}
    \int_0^t\|\p_tu(s)\|_{L^2}^2ds + \|\nabla u(t)\|_{L^2}^2 &\le g^2\int_0^t\|\rho(s)\|_{L^2}^2ds + \|\nabla u_0\|_{L^2}^2\\
    &\le g^2(2T_*\|\rho_0\|_{L^2}^2) + \|u_0\|_1^2\\
    &\le C(\rho_0,u_0)(g^2 + 1).
\end{align*}
By taking supremum of $t$ over $[0,T_*]$, we have arrive at the estimate \eqref{aux:dtu}.
\end{proof}
\end{lem}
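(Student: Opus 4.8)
The plan is to split the $H^1([0,T_*]\times\Omega)$ norm of $u$ into its spatial and temporal contributions. The spatial part, $\int_0^{T_*}\bigl(\|u\|_{L^2}^2+\|\nabla u\|_{L^2}^2\bigr)\,dt$, is already under control: estimate \eqref{est:L2u} of Proposition \ref{prop:l2bd} bounds $\sup_{[0,T_*]}\|u\|_{L^2}^2 + \int_0^{T_*}\|\nabla u\|_{L^2}^2\,dt$ by $C(\rho_0,u_0)(g^2+1)$, and $T_*\le 1$. Hence it suffices to bound $\int_0^{T_*}\|\partial_t u(t)\|_{L^2}^2\,dt$ by the same quantity.

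To do this I would pair the fluid equation in the form \eqref{eq:stokes2}, $\partial_t u + \calA u = g\bP(\rho e_z)$, with $\partial_t u$. Since $\partial_t u$ inherits the divergence-free and zero-boundary conditions of $u$, the pressure gradient pairs to zero and $\int_\Omega \calA u\cdot\partial_t u = \tfrac12\tfrac{d}{dt}\|\nabla u\|_{L^2}^2$, so Cauchy--Schwarz on the right-hand side gives
$$\|\partial_t u\|_{L^2}^2 + \tfrac12\tfrac{d}{dt}\|\nabla u\|_{L^2}^2 \;\le\; \tfrac12\|\partial_t u\|_{L^2}^2 + \tfrac{g^2}{2}\|\rho\|_{L^2}^2 .$$
After absorbing the first term on the right and integrating on $[0,t]$, the $\rho$ estimate \eqref{est:L2rho} (which yields $\int_0^{T_*}\|\rho\|_{L^2}^2\,ds\le 4\|\rho_0\|_{L^2}^2$ since $T_*\le 1$) gives
$$\int_0^t\|\partial_t u\|_{L^2}^2\,ds + \|\nabla u(t)\|_{L^2}^2 \;\le\; 4g^2\|\rho_0\|_{L^2}^2 + \|\nabla u_0\|_{L^2}^2 \;\le\; C(\rho_0,u_0)(g^2+1).$$
Taking the supremum over $t\in[0,T_*]$ controls $\int_0^{T_*}\|\partial_t u\|_{L^2}^2\,dt$, and combining with the spatial bound proves \eqref{est:mainuest}.

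There is essentially no analytical obstacle here; the computation is a routine energy estimate. The only point requiring a word of justification is the use of $\partial_t u$ as a test function and the vanishing of the pressure term --- this is handled, as everywhere in Section \ref{sec:lwp}, by carrying out the estimate first on the Galerkin approximations and passing to the limit, noting that Proposition \ref{prop:h1bd} already guarantees $\partial_t u\in L^2((0,T_*);L^2)$. The feature worth emphasizing is simply that one must keep the $g$-dependence explicit and quadratic, which is exactly the form needed in the large-buoyancy analysis that follows.
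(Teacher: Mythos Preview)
Your proof is correct and follows essentially the same route as the paper: reduce via \eqref{est:L2u} to controlling $\int_0^{T_*}\|\partial_t u\|_{L^2}^2\,dt$, then test the fluid equation by $\partial_t u$, absorb with Cauchy--Schwarz, integrate, and use \eqref{est:L2rho}. The only addition is your remark about justifying $\partial_t u$ as a test function through the Galerkin scheme, which the paper leaves implicit.
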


The following lemma yields a key additional control over the velocity field by genuinely exploiting the buoyancy forcing structure of the fluid equation in \eqref{eq:ksstokes}:
\begin{lem}
\label{lem:key1}
Let $(\rho, u)$ be a regular solution to problem \eqref{eq:ksstokes} with initial data $\rho_0 \in H^1_0$, $u_0 \in V$. Then
\begin{equation}\label{aux829a}
\int_0^{T_*}\|u(t)\|_{L^2}^2 dt \le C(\Omega, \rho_0,u_0)(g+1).
\end{equation}
\begin{rmk}
    Note that a straightforward $L^2$ estimate of $u$ only yields a bound $\int_0^{T_*}\|u(t)\|_{L^2}^2 dt \lesssim g^2$. What we display in the lemma is that the structure of buoyancy forcing ``gains a $g^{-1}$''.
\end{rmk}
\begin{proof}
Without loss of generality, assume that $\Omega$ contains the origin.
Denote $L := \diam(\Omega)>0$. Multiplying the $\rho$-equation of \eqref{eq:ksstokes} by $z - L$ (recall that $z = x_d$ when $\Omega \subset \R^d,$ $d=2,3$) and integrating over $\Omega$, we have
\begin{align*}
    \frac{d}{dt}\int_\Omega (z-L)\rho dx+ \int_\Omega (z-L)(u\cdot \nabla\rho) dx- \int_\Omega (z-L)\Delta \rho dx+ \int_\Omega (z-L)\divv(\rho\nabla(-\Delta)^{-1}\rho) dx = 0.
\end{align*}
Moreover using the Dirichlet conditions $\rho|_{\p \Omega} = 0$ and $u|_{\p \Omega} = 0$, we note that via integration by parts:
$$
\int_\Omega (z-L)(u\cdot \nabla\rho) dx = -\int_\Omega \rho u_z dx+ \int_{\p \Omega}(z-L)\rho u_n dx= -\int_\Omega \rho u_z dx,
$$
$$
-\int_\Omega (z-L)\Delta \rho dx= \int_\Omega \p_z \rho dx- \int_{\p \Omega}(z-L)\frac{\p \rho}{\p n} d S,
$$
$$
\int_\Omega (z-L)\divv(\rho\nabla(-\Delta)^{-1}\rho) dx = -\int_\Omega \rho \p_z(-\Delta)^{-1}\rho dx,
$$
where $u_n$ denotes the normal component of $u$ along $\p \Omega$, and $dS$ denotes the surface measure induced on $\p\Omega$. Collecting the above computations, we have
\begin{align}
    \int_\Omega \rho u_z dx &= \frac{d}{dt}\int_\Omega (z-L)\rho dx+ \int_\Omega \p_z \rho dx- \int_{\p \Omega}(z-L)\frac{\p \rho}{\p n} dS -\int_\Omega \rho \p_z(-\Delta)^{-1}\rho dx. \label{lem:key1eqn1}
\end{align}
On the other hand, testing the $u$-equation of \eqref{eq:ksstokes} by $u$, we also have
\begin{equation}
    \frac{1}{2}\frac{d}{dt}\|u\|_{L^2}^2 + \|\nabla u\|_{L^2}^2 = g\int_\Omega \rho u_z.\label{lem:key1eqn2}
\end{equation}
From Lemma \ref{lem:L1lem}, we also know that $\p \rho/\p n \le 0$ on $\p \Omega$ in $[0,T_*]$. Hence, we have $\int_{\p \Omega}(z-L)\frac{\p \rho}{\p n} dS \ge 0$ by definition of $L$. Combining this fact with \eqref{lem:key1eqn1}, \eqref{lem:key1eqn2}, and integrating on $[0,T_*]$, we have
\begin{align*}
    \|u(t)\|_{L^2}^2-\|u_0\|_{L^2}^2 &\le 2g\bigg[\int_\Omega (z-L)(\rho(t,x) - \rho_0(x))\,dx + \int_0^t\int_\Omega \p_z \rho \,dx - \int_0^t\int_\Omega \rho \p_z(-\Delta)^{-1}\rho \,dx\bigg]\\
    &\le C(\Omega)g(\|\rho_0\|_{L^1} + \sqrt{T_*}\left(\int_0^{T_*}\|\nabla\rho\|_{L^2}^2 dt\right)^{1/2} + \int_0^{T_*}\|\rho\|_{L^2}^2 dt)\\
    &\le C(\Omega, \rho_0)g,
\end{align*}
where we used elliptic estimate in the second inequality, and \eqref{est:L2rho} in the final inequality. The proof is therefore completed after integrating in time again.
\end{proof}
\end{lem}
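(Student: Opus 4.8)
The plan is to reduce the desired bound to a $g$-independent estimate of the coupling integral $\int_0^t\int_\Omega \rho u_z\,dx\,ds$. Testing the $u$-equation in \eqref{eq:ksstokes} by $u$ and integrating in time gives the energy identity
\[
\tfrac12\|u(t)\|_{L^2}^2 + \int_0^t\|\nabla u\|_{L^2}^2\,ds = \tfrac12\|u_0\|_{L^2}^2 + g\int_0^t\int_\Omega \rho u_z\,dx\,ds.
\]
Hence, if one can show $\int_0^t\int_\Omega \rho u_z\,dx\,ds \le C(\Omega,\rho_0)$ uniformly in $g$ and in $t \in [0,T_*]$, then $\|u(t)\|_{L^2}^2 \le \|u_0\|_{L^2}^2 + C(\Omega,\rho_0)g$, and integrating over $[0,T_*]$ (recall $T_* \le 1$) yields \eqref{aux829a}. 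This is precisely where the buoyancy structure is used: a direct Cauchy--Schwarz bound $|\int_\Omega \rho u_z| \le \|\rho\|_{L^2}\|u\|_{L^2}$ would cost a full power of $g$, whereas here it is traded for a quantity that does not see $g$ at all.

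To control the coupling integral, assume without loss of generality that $\Omega$ contains the origin and set $L := \diam(\Omega)$, so that $z - L \le 0$ on $\overline\Omega$. Test the $\rho$-equation in \eqref{eq:ksstokes} by $z-L$ and integrate by parts, using $\divv u = 0$ and the Dirichlet conditions $u|_{\p\Omega}=0$, $\rho|_{\p\Omega}=0$. The transport term yields exactly $-\int_\Omega \rho u_z\,dx$; the diffusion term yields $\int_\Omega \p_z\rho\,dx$ together with the boundary contribution $-\int_{\p\Omega}(z-L)\frac{\p\rho}{\p n}\,dS$; and the chemotactic term yields $-\int_\Omega \rho\,\p_z(-\Delta)^{-1}\rho\,dx$. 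Solving for $\int_\Omega \rho u_z$ and integrating over $[0,t]$ expresses the coupling integral as
\[
\int_0^t\int_\Omega \rho u_z = \int_\Omega (z-L)\big(\rho(t)-\rho_0\big) + \int_0^t\int_\Omega \p_z\rho - \int_0^t\int_{\p\Omega}(z-L)\frac{\p\rho}{\p n}\,dS - \int_0^t\int_\Omega \rho\,\p_z(-\Delta)^{-1}\rho.
\]

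The key observation is that the boundary term has a favorable sign and may simply be discarded: by the parabolic maximum principle (the Hopf-lemma argument already invoked in Lemma \ref{lem:L1lem}) one has $\frac{\p\rho}{\p n} \le 0$ on $\p\Omega$, and since $z - L \le 0$ this forces $-\int_{\p\Omega}(z-L)\frac{\p\rho}{\p n}\,dS \le 0$. The three remaining terms are all bounded by $C(\Omega,\rho_0)$ independently of $g$: the first via $\|\rho(t)\|_{L^1} + \|\rho_0\|_{L^1} \lesssim \|\rho_0\|_{L^1}$, using the monotonicity of the $L^1$ norm from Lemma \ref{lem:L1lem}; the second via $\int_0^{T_*}\|\nabla\rho\|_{L^1}\,ds \lesssim \sqrt{T_*}\big(\int_0^{T_*}\|\nabla\rho\|_{L^2}^2\,ds\big)^{1/2}$, controlled by \eqref{est:L2rho}; and the third via $\int_0^{T_*}\|\rho\|_{L^2}\|\nabla(-\Delta)^{-1}\rho\|_{L^2}\,ds \lesssim \int_0^{T_*}\|\rho\|_{L^2}^2\,ds$, using an elliptic estimate and again \eqref{est:L2rho}. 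Plugging this into the energy identity and integrating over $[0,T_*]$ gives \eqref{aux829a}.

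The main obstacle is conceptual rather than computational: one must recognize that testing the density equation against the \emph{affine} weight $z - L$ — as opposed to $z$ alone, or to a compactly supported cutoff — simultaneously isolates the coupling term $\int_\Omega \rho u_z$ and arranges the boundary contribution to have the right sign, so that it can be thrown away rather than estimated. Once this multiplier is in hand, every remaining step is a routine application of the a priori bounds in Proposition \ref{prop:l2bd} together with the $L^1$-monotonicity of Lemma \ref{lem:L1lem}.
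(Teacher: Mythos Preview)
Your proposal is correct and follows essentially the same approach as the paper: test the $\rho$-equation by the affine weight $z-L$ with $L=\diam(\Omega)$, use the sign of $\frac{\partial\rho}{\partial n}$ on $\partial\Omega$ (via Lemma~\ref{lem:L1lem}) to drop the boundary term, and bound the remaining three contributions by the $g$-independent estimates of Proposition~\ref{prop:l2bd} and Lemma~\ref{lem:L1lem}. The structure, the choice of multiplier, and the final bookkeeping all match the paper's argument.
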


\subsection{A Key Theorem}
In this part, we prove a quantitative characterization of the regularizing effect of the Stokes-Boussinesq flow in \eqref{eq:ksstokes}. With a rigidity-type argument inspired by \cite{CNR}, we show that the flow with sufficiently large $g$ can suppress the $L^2$ energy of $\rho$ to be arbitrarily small within the time scale of local existence, as elucidated in the following theorem:
\begin{thm}
\label{key}
Let $\rho_0 \in H^1_0, u_0 \in V$ be initial conditions for the problem \eqref{eq:ksstokes}, and consider $(\rho, u)$ to be the regular solution. For arbitrary $\epsilon > 0$, there exists $g_* = g_*(\rho_0, u_0, \epsilon)$ such that for any $g \ge g_*$,
$$
\inf_{t \in [0,T_*]} \|\rho(t,\cdot)\|_{L^2} \le \epsilon.
$$
\end{thm}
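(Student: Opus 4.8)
The plan is to argue by contradiction, built around a rigidity phenomenon for the fluid equation in the large-buoyancy limit combined with the a priori bounds already established. Suppose the statement fails for some $\epsilon>0$. Since $T_*$ depends only on $\rho_0$ and is independent of $g$, I then obtain a sequence $g_n\to\infty$ together with the corresponding regular solutions $(\rhon,\un)$ of \eqref{eq:ksstokes} for the \emph{same} data $\rho_0,u_0$, such that $\|\rhon(t)\|_{L^2}>\epsilon$ for all $t\in[0,T_*]$ and all $n$. Along this sequence I will use the uniform bounds that are already available: $\sup_{[0,T_*]}\|\rhon(t)\|_{L^2}^2+\int_0^{T_*}\|\nabla\rhon\|_{L^2}^2\,dt\le 4\|\rho_0\|_{L^2}^2$ from \eqref{est:L2rho}; $\rhon\ge 0$ and $\|\rhon(t)\|_{L^1}\le\|\rho_0\|_{L^1}$ from Lemma \ref{lem:L1lem}; and, crucially, $\int_0^{T_*}\|\un\|_{L^2}^2\,dt\le C(g_n+1)$ from Lemma \ref{lem:key1}. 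Re-examining the energy identity \eqref{lem:key1eqn2} and the proof of Lemma \ref{lem:key1}, one also gets $\int_0^{T_*}\|\nabla\un\|_{L^2}^2\,dt+\sup_{[0,T_*]}\|\un(t)\|_{L^2}^2\le C(g_n+1)$; this \emph{linear}-in-$g_n$ control (rather than quadratic) is exactly the gain coming from the buoyancy structure and is what makes the argument work.

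The rigidity input will come entirely from the Stokes equation \eqref{eq:stokes2}, which gives $\bP(\rhon e_z)=g_n^{-1}(\p_t\un+\calA\un)$. Fixing a nonnegative weight $\chi\in C^1([0,T_*])$ and setting $\bar\rho_n^\chi:=\int_0^{T_*}\chi(t)\,\rhon(t)\,dt\in H^1_0$, I integrate this identity in time against $\chi$:
\[
\bP\!\left(\bar\rho_n^\chi\,e_z\right)=\frac1{g_n}\int_0^{T_*}\chi(t)\,\p_t\un(t)\,dt+\frac1{g_n}\int_0^{T_*}\chi(t)\,\calA\un(t)\,dt .
\]
After integrating by parts in time in the first integral, every resulting term is $g_n^{-1}$ times a quantity of size $O(\sqrt{g_n+1})$ — namely $\un(T_*)$, $u_0$, $\int_0^{T_*}\un\,dt$ and $\int_0^{T_*}\nabla\un\,dt$, all controlled by the bounds above — so $\|\bP(\bar\rho_n^\chi e_z)\|_{V^*}\to0$. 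On the other hand, by \eqref{est:L2rho} the sequence $\{\bar\rho_n^\chi\}_n$ is bounded in $H^1_0$, so along a subsequence it converges strongly in $L^2$ to some $\bar\rho^\chi\in H^1_0$; passing to the limit (continuity of $\bP$ on $L^2$ and of the embedding $L^2\hookrightarrow V^*$) yields $\bP(\bar\rho^\chi e_z)=0$. Here I will invoke a rigidity lemma: \emph{if $h\in H^1_0(\Omega)$ satisfies $\bP(h e_z)=0$, then $h\equiv0$.} Indeed $\bP(h e_z)=0$ forces $h e_z$ to be a gradient, hence $\p_{x_j}h=0$ for every horizontal index $j$, so for a.e.\ height $z$ the slice $h(\cdot,z)$ is constant on each connected component of $\{x':(x',z)\in\Omega\}$; since $\Omega$ is bounded, the relative boundary of every such component is a nonempty subset of $\p\Omega$, where $h$ has zero trace, so the constant is $0$. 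Thus $\bar\rho^\chi=0$, and since every subsequence has such a sub-subsequence, $\bar\rho_n^\chi\to0$ strongly in $L^2$ for the full sequence.

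To conclude, I take $\chi\equiv1$. Since $\rhon\ge0$, $\int_0^{T_*}\|\rhon(t)\|_{L^1}\,dt=\|\bar\rho_n^1\|_{L^1}\le|\Omega|^{1/2}\|\bar\rho_n^1\|_{L^2}\to0$. But the standing assumption $\|\rhon(t)\|_{L^2}>\epsilon$, combined with the Gagliardo--Nirenberg inequality $\|f\|_{L^2}\lesssim\|f\|_{L^1}^{2/(d+2)}\|\nabla f\|_{L^2}^{d/(d+2)}$ for $f\in H^1_0(\Omega)$, gives $\|\rhon(t)\|_{L^1}\gtrsim \epsilon^{(d+2)/2}\|\nabla\rhon(t)\|_{L^2}^{-d/2}$ for every $t$. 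Integrating over $[0,T_*]$ and applying Jensen's inequality to the convex function $x\mapsto x^{-d/4}$ together with $\int_0^{T_*}\|\nabla\rhon\|_{L^2}^2\,dt\le 4\|\rho_0\|_{L^2}^2$ yields
\[
\int_0^{T_*}\|\rhon(t)\|_{L^1}\,dt\ \gtrsim\ \epsilon^{(d+2)/2}\,T_*\Bigl(\tfrac{4\|\rho_0\|_{L^2}^2}{T_*}\Bigr)^{-d/4}=:c(\epsilon,\rho_0,T_*,d)>0,
\]
uniformly in $n$, contradicting the preceding display. This forces the existence of the desired $g_*=g_*(\rho_0,u_0,\epsilon)$.

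The hard part, and the reason the argument must be structured this way, is that one cannot run a naive compactness argument directly on $\rhon$: the advection term $\un\cdot\nabla\rhon$ is genuinely of size $g_n$, so $\{\rhon\}_n$ carries no uniform-in-$n$ modulus of continuity in time and one cannot pass to the limit in the $\rho$-equation. The workaround is to extract vertical stratification only for a \emph{time average} of $\rhon$, using solely the linear Stokes equation and the buoyancy-improved bound of Lemma \ref{lem:key1}, and then to convert this into a quantitative contradiction through the monotone/conserved quantities (mass, the $L^2_tH^1_x$ bound) and Gagliardo--Nirenberg, rather than through compactness of $\rhon$ itself. It is also worth emphasizing that the rigidity lemma is precisely where the Dirichlet ("cold") boundary condition and the boundedness of $\Omega$ enter in an essential way.
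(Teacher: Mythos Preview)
Your proof is correct and shares the paper's architecture --- contradiction along $g_n\to\infty$, rigidity from the Stokes equation forcing the time-averaged density to depend only on $z$, and then the $H^1_0$ boundary condition forcing it to vanish --- but the implementation differs in two places. First, instead of passing to weak limits of $\rho_n$ and of $u_n/g_n$ in $H^1_{t,x}$ to identify the limiting relation $\rho_\infty e_z=\nabla p_\infty$ (for which the paper uses Lemma~\ref{lem:velest}), you integrate the Stokes identity $\bP(\rho_n e_z)=g_n^{-1}(\p_t u_n+\calA u_n)$ once in time and estimate directly in $V^*$, using only the $O(g)$ bounds extracted from the proof of Lemma~\ref{lem:key1}. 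Second, and more substantively, to show the time-averaged limit is nontrivial the paper first establishes a uniform $L^\infty$ bound on $\rho_n$ via a Moser-type iteration (Proposition~\ref{prop:l2tolinfty}, proved in the appendix) and then invokes Lemma~\ref{prop:nonneg2}; you bypass this entirely, converting the standing lower bound $\|\rho_n(t)\|_{L^2}>\epsilon$ and the uniform control on $\int_0^{T_*}\|\nabla\rho_n\|_{L^2}^2\,dt$ into a direct uniform lower bound on $\int_0^{T_*}\|\rho_n(t)\|_{L^1}\,dt=\|\bar\rho_n^1\|_{L^1}$ through the Gagliardo--Nirenberg interpolation $\|f\|_{L^2}\lesssim\|f\|_{L^1}^{2/(d+2)}\|\nabla f\|_{L^2}^{d/(d+2)}$ and Jensen. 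Your route is more elementary (no $L^\infty$ machinery) and more quantitative; the paper's route has the conceptual advantage of explicitly identifying the limiting object $\rho_\infty$ and its equation, which could be useful if one wanted to analyze the large-buoyancy limit further.
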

\begin{proof}
Suppose for the sake of contradiction that there exists $\epsilon_0 > 0$ such that there is a sequence $\{(\rho_n, u_n, g_n)\}_n$ which are regular solutions to \eqref{eq:ksstokes} with $\rho = \rho_n, u=u_n, g = g_n$ and $g_n \to +\infty$ (corresponding to initial data $\rho_0, u_0$). Indeed, we may without loss of generality assume that the sequence $\{g_n\}_n$ is increasing by picking a subsequence if necessary. Also, for any $t \in [0,T_*]$, and for all $n$
\begin{equation}
\label{contra}
\|\rho_n(t,\cdot)\|_{L^2} > \epsilon_0.
\end{equation}
Note that indeed we can use the uniform choice of time $T_*$ here, since $T_*$ only depends on $\rho_0$. Moreover, we consider the normalized velocity $\bu_n = u_n/g_n$. We will divide the proof into the following steps:
\begin{itemize}
    \item \textbf{Step 1: Convergence properties of $(\rho_n, u_n)$.} From \eqref{est:mainuest}, we have $\|\bu_n\|_{H^1([0,T_*]\times \Omega)}\le C(\rho_0,u_0)$. Using weak compactness and the Sobolev compact embedding theorem, we obtain that there exists $\bu_\infty \in H^1([0,T_*]\times \Omega)$ such that
    $$
    \bu_n \rightharpoonup \bu_\infty\; \text{ in } H^1([0,T_*]\times \Omega),\,\,\,{\rm and}\,\,\,\bu_n \to \bu_\infty\; \text{ in } L^2([0,T_*]\times \Omega).
    $$
    In fact, observe that from the estimate \eqref{aux829a} of Lemma \ref{lem:key1} it follows that $\|\bu_n\|_{L^2([0,T_*]\times \Omega)} \rightarrow 0$ as $n \rightarrow \infty,$
    so $\bu_\infty =0.$
    In addition, from the energy estimate \eqref{est:L2rho}, we may pick a further subsequence, still indexed by $n$, such that there exists $\rho_\infty \in L^2(0,T_*; H^1_0(\Omega))$ and
    $$
    \rho_n \rightharpoonup \rho_\infty\; \text{ in } L^2(0,T_*; H^1_0(\Omega)).
    $$
    \item \textbf{Step 2: Derivation of the limiting fluid equation.}
    Since $(\rho_n,u_n)$ is a regular solution to \eqref{eq:ksstokes} with parameter $g_n$ on $[0,T_*]$, $u_n$ in particular solves the fluid equation in \eqref{eq:ksstokes} weakly.
    That is,
    $$
    -\int_0^{T_*}\int_\Omega (\p_t \phi)\bu_n dxdt + \int_0^{T_*}\int_\Omega (\calA\phi)\bu_n dxdt = \int_0^{T_*}\int_\Omega \rho_n(\phi\cdot e_z) dxdt,
    $$
    for any smooth vector field $\phi \in C_c^\infty([0,T_*] \times \Omega)$ with $\divv\phi = 0$.
    By the convergence properties of $\rho_n$, $u_n$ as shown in Step 1, and by Lemma \ref{lem:key1} we find that
    \begin{equation}\label{weakeqn}
    \rho_\infty e_z = \nabla p_\infty, \,\,\, (t,x) \in [0,T_*] \times \Omega
    \end{equation}
    holds in a weak sense.

    \item \textbf{Step 3: Nontriviality of $\rho_\infty$.} By maximum principle, we know that $\rho_n$, and thus $\rho_\infty$, is nonnegative. We would also like to claim that $\rho_\infty \not\equiv 0$. To show this fact, we need the following proposition.
 \begin{prop}
\label{prop:l2tolinfty}
Let $\Omega \subset \R^d$, $d = 2,3$, be a smooth, bounded domain. Assume $(\rho, u)$ to be the regular solution of problem \eqref{eq:ksstokes} on $[0,T_*]$ with initial condition $\rho_0 \geq 0 \in H_0^1,$ $u_0 \in V$.
If there exists $M > 0$ such that $\sup_{0\le t \le T_*}\|\rho(t)\|_{L^2} \le M$, then we have
$$
\sup_{0\le t\le T_*}\|\rho(t)\|_{L^\infty} \le CM^{\frac{4}{4-d}}.
$$
Here $C$ is a constant that may only depend on $d$ and $\Omega$.
\end{prop}
A variant of this result has been proved in \cite{KX} (Proposition 9.1), in a two dimensional periodic setting.
The proof of Proposition \ref{prop:l2tolinfty} is similar and for the sake of completeness will be provided in the appendix.

Next, we need the following lemma.
\begin{lem}
\label{prop:nonneg2}
Let $D \subset \R^d$, $d\in \N$, be a bounded domain, and let $\{f_n\}_n \subset L^2(D)$ be a sequence of nonnegative functions that weakly converges to a function $f \in L^2(D)$. Assume that there exist $M, \epsilon > 0$ such that $\|f_n\|_{L^2} > \epsilon$, $\|f_n\|_{L^\infty} \le M$ for all $n$. Then $f \not\equiv 0$.
\end{lem}
\begin{proof}
Suppose for the sake of contradiction that $f \equiv 0$. Consider the characteristic function $\phi = \chi_{D}$ Since $D$ is bounded, $\phi \in L^2(D)$. Then the weak convergence informs us that
$$
\lim_{n \to \infty}\int_D f_n = 0.
$$
As $f_n \ge 0$ for all $n$, this is equivalent to $\lim_{n \to \infty}\|f_n\|_{L^1} = 0$. Since $\|f_n\|_{L^\infty} \le M$, by interpolation we have
$$
\|f_n\|_{L^2}^2 \le \|f_n\|_{L^\infty}\|f_n\|_{L^1} \to 0
$$
as $n \to \infty$. But this contradicts with the assumption that $\|f_n\|_{L^2} > \epsilon$.
\end{proof}

Observe that from \eqref{est:L2rho}, we know that $\|\rho_n(t,\cdot)\|_{L^2} \leq 4 \|\rho_0\|_{L^2}$ for all $t \in [0,T_*]$ and all $n.$
Thus applying Proposition \ref{prop:l2tolinfty} to $\rho_n$ we get that $\|\rho_n(t, \cdot)\|_{L^\infty} \leq M$ for all $t \in [0,T_*],$ and all $n$, where
$M = C(d,\Omega)\|\rho_0\|_{L^2}^{\frac{4}{4-d}}.$ Then Lemma \ref{prop:nonneg2} implies that $\rho_\infty \not\equiv 0.$


\item \textbf{Step 4: Derivation of a contradiction.}
 Let us consider
    $$
    \psi_n(x) := \int_0^{T_*}\rho_n(t,x)dt,\; \psi_\infty(x) := \int_0^{T_*}\rho_\infty(t,x)dt.
    $$
    In particular, $\psi_\infty \not\equiv 0$ and $\psi_\infty \geq 0$ by Step 3. On one hand, picking arbitrary $\eta \in L^2(\Omega)$, we have
    \begin{align*}
    \bigg|\int_\Omega \eta(x)(\psi_n(x) - \psi_\infty(x))dx\bigg| &= \bigg|\int_0^{T_*}\int_\Omega \eta(x)(\rho_n(t,x) - \rho_\infty(t,x))dxdt\bigg|\\
    &= \bigg|\int_0^{T_*}\int_\Omega \eta(x)\chi_{[0,T_*]}(t)(\rho_n(t,x) - \rho_\infty(t,x))dxdt\bigg|,
    \end{align*}
    which converges to $0$ as $\rho_n \rightharpoonup \rho_\infty\; \text{ in } L^2([0,T_*]\times \Omega)$. This implies that $\psi_n \rightharpoonup \psi_\infty$ in $L^2( \Omega)$. On the other hand, we note that by Minkowski inequality and H\"older inequality,
    $$
    \|\nabla \psi_n\|_{L^2} \le \int_0^{T_*}\|\nabla\rho_n\|_{L^2}dt \le \sqrt{T_*}\|\nabla\rho_n\|_{L^2([0,T_*]\times \Omega)} \le C(\rho_0),
    $$
    where we used \eqref{est:L2rho} in the last step. Since $\rho_n|_{\p \Omega} = 0$, we know that $\psi_n \in H^1_0(\Omega)$ with a uniform $H^1$-norm bound from above. Hence by weak compactness and Sobolev compact embedding theorem, there exists a subsequence, still denoted by $\psi_n$, and $\tilde\psi_\infty \in H^1_0(\Omega)$ such that
    $$
    \psi_n \rightharpoonup \tilde{\psi}_\infty\; \text{ in } H^1_0(\Omega),\;\psi_n \to \tilde{\psi}_\infty\; \text{ in } L^2(\Omega).
    $$
    Indeed, we must have $\tilde{\psi}_\infty = \psi_\infty$ due to the uniqueness of weak limit, and hence $\psi_\infty \in H^1_0(\Omega).$

    But now, integrating \eqref{weakeqn} with respect to time, we have
    $$
    \nabla P = \psi_\infty  e_z,
    $$
    where $P(x) := \int_0^{T_*}p_\infty(t,x)dt$. But this implies that $\psi_\infty(x) = h(z)$, where $h$ is some single-variable function. Moreover, we know that $\psi_\infty \in H^1_0(\Omega)$. These two facts imply that $\psi_\infty \equiv 0$. However, this contradicts the fact that $\psi_\infty > 0$. This completes the proof of the theorem.
\end{itemize}

\end{proof}

\subsection{Proof of Global Well-Posedness with Large $g$}
Now we are ready to prove the main theorem. As we will see below, $\|\rho\|_{L^2}$ enjoys a Riccati-type differential inequality which preserves smallness. This structure combined with Theorem \ref{key} gives the boundedness of $\|\rho(t)\|_{L^2}^2$ globally in time (and actually smallness in large time). The proof is thus done after we invoke Theorem \ref{thm:L2criterion}.

\begin{proof}[Proof of Theorem \ref{thm:main}] 
Say the solution $(\rho, u)$ is regular up to a maximal time $T_0$, which may or may not be infinite. Suppose first that $T_0 < \infty$.
Similarly to the proof of Proposition \ref{prop:l2bd}, using the energy estimate of $\rho$, a Gagliardo-Nirenberg-Sobolev inequality, Young's inequality and Poincaré inequality, we have for $t \in (0,T_0)$ that
\begin{align}
  \frac{1}{2}\frac{d}{dt}\|\rho\|_{L^2}^2 &\le -\|\nabla\rho\|_{L^2}^2 + \frac{1}{2}\|\nabla\rho\|_{L^2}^2 + C\|\rho\|_{L^2}^{\frac{12-2d}{4-d}} \nonumber \\
  &\le -\frac{1}{2C_p}\|\rho\|_{L^2}^2 + C\|\rho\|_{L^2}^{\frac{12-2d}{4-d}} =: f_d(\|\rho\|_{L^2}^2), \label{aux829c}
\end{align}
where $C_p$ denotes the Poincaré constant that only depends on domain $\Omega$. Since $2 < \frac{12-2d}{4-d}$ when $d = 2,3$, we fix $\epsilon \in (0,1)$ sufficiently small that $f_d(\epsilon) < -\frac{1}{4C_p}\epsilon$. Note that such choice of $\epsilon$ only depends on domain $\Omega$. By Theorem \ref{key}, there exists $g_* = g_*(\rho_0,u_0)$ such that there exists $\tau \in [0,T_*]$ with
$
\|\rho(\tau)\|_{L^2}^2 \le \epsilon
$
for any $g \ge g_*$. Now we consider the problem \eqref{eq:ksstokes} starting from $t = \tau$. Then from the inequalities above, we note that $\frac{d}{dt}\|\rho(t,\cdot)\|_{L^2}^2|_{t = \tau} < 0;$ by \eqref{aux829c}
this inequality also holds for all $t \in [\tau,T_0].$
Hence, there exists $M > 0$ depending on $\rho_0$ such that
$
\sup_{t \in [0,T_0]}\|\rho(t,\cdot)\|_{L^2}^2 \le M,
$
which yields
$$
\int_{0}^{T_0}\|\rho(t,\cdot)\|_{L^2}^{\frac{4}{4-d}} dt \le M^{\frac{2}{4-d}}T_0 < \infty.
$$
By the regularity criterion, this contradicts the definition of $T_0.$ Therefore, we conclude that $T_0 = \infty$ and the solution is globally regular. To prove \eqref{eq:quenching}, we note from above that $\sup_{t \ge \tau}\|\rho(t,\cdot)\|_{L^2}^2 \le \epsilon$.
In fact, by our choice of $\epsilon$ and \eqref{aux829c}, we have
$$
\frac{d}{dt}\|\rho\|_{L^2}^2 \le -\frac{1}{4C_p}\|\rho\|_{L^2}^2,\; t \ge \tau.
$$
Using Gr\"onwall inequality, $\|\rho(\tau)\|_{L^2}^2 \le \epsilon < 1$, and $\tau \le T_* \le 1$, we have: for $t \ge \tau$,
\begin{align*}
\|\rho(t)\|_{L^2}^2 &\le \|\rho(\tau)\|_{L^2}^2e^{-\frac{1}{4C_p}(t-\tau)} \le e^{-\frac{1}{4C_p}t}e^{\frac{1}{4C_p}\tau}\\
&\le Ce^{-\frac{1}{4C_p}t},
\end{align*}
where $C = e^{1/4C_p}$ is a constant that only depends on domain $\Omega$. This yields \eqref{eq:quenching} after rearranging the inequality above.

\end{proof}

\appendix
\section{Appendix}
In the appendix, we will remark on one regularity estimate for Stokes operator $\calA$ that plays an essential role in our energy estimates. What follows will be a proof of Proposition \ref{prop:l2tolinfty} that appear in the proof of the main lemma.

The regularity result for Stokes operator stated below is standard; proofs can be found for example in \cite{CF}:
\begin{athm}
\label{stokesest}
Let $\Omega$ be a bounded $C^2$ domain. Then there exists a constant $C = C(\Omega)$ such that for all $u \in D(\calA) = H^2(\Omega) \cap H^1_0(\Omega)$,
$$
\|u\|_{2} \le C(\Omega)\|\calA u\|_{L^2}.
$$
Moreover, there exist constants $c_0, C_0$ only depending on domain $\Omega$ such that
$$
c_0\|\nabla u\|_{L^2} \le \|\calA^{1/2}u\|_{L^2} \le C_0\|\nabla u\|_{L^2}
$$
\end{athm}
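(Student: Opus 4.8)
The plan is to dispose of the form-domain equivalence $c_0\|\nabla u\|_{L^2} \le \|\calA^{1/2}u\|_{L^2} \le C_0\|\nabla u\|_{L^2}$ first, since it is essentially an identity, and then to reduce the $H^2$ estimate to the classical regularity theory for the stationary Stokes system.

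For the first part, recall that $\calA$ is a nonnegative self-adjoint operator on $H$ whose associated closed quadratic form is $a(u,v) = \int_\Omega \nabla u : \nabla v\,dx$ with form domain $V$. Hence by the spectral theorem $\|\calA^{1/2}u\|_{L^2}^2 = a(u,u) = \|\nabla u\|_{L^2}^2$ for all $u \in D(\calA^{1/2}) = V$. Concretely, for $u \in D(\calA) = H^2 \cap V$ one has $(\calA u, u)_{L^2} = (-\bP\Delta u, u)_{L^2} = (-\Delta u, u)_{L^2} = \|\nabla u\|_{L^2}^2$, using that $\bP$ is an $L^2$-orthogonal projection with $\bP u = u$, together with integration by parts and $u|_{\p\Omega} = 0$; this identity extends to all $u \in V$ by density of $D(\calA)$ in $V$. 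So in fact equality holds, and one may take $c_0 = C_0 = 1$; the general constants in the statement merely allow for the equivalent use of the full $H^1$ norm via the Poincar\'e inequality.

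For the $H^2$ bound, given $u \in D(\calA)$ I would set $f := \calA u = \bP(-\Delta u) \in H$. Since $u \in H^2$ we have $\Delta u \in L^2$, so by the Helmholtz--Leray decomposition $L^2(\Omega)^d = H \oplus \{\nabla q : q \in H^1(\Omega)\}$ the field $(I-\bP)(-\Delta u)$ is a gradient: there is $p \in H^1(\Omega)$, unique up to an additive constant, with $-\Delta u + \nabla p = f$. Together with $\divv u = 0$ in $\Omega$ and $u|_{\p\Omega} = 0$, the pair $(u,p)$ solves, in $(H^2 \cap H^1_0)\times(H^1(\Omega)/\R)$, the inhomogeneous stationary Stokes problem with right-hand side $f \in L^2$. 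The classical $C^2$-domain regularity estimate for that problem then gives $\|u\|_2 + \|\nabla p\|_{L^2} \le C(\Omega)\|f\|_{L^2} = C(\Omega)\|\calA u\|_{L^2}$, which is the desired bound (equivalently, since the stationary Stokes problem with $L^2$ data has a unique solution, $u$ must coincide with it).

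It remains to recall why the stationary Stokes estimate holds; as this is classical, I would only indicate the scheme and cite \cite{CF}. One localizes by a partition of unity subordinate to a finite cover of $\overline{\Omega}$. On interior patches, eliminating the pressure gives $\Delta p = \divv f$ and $-\Delta u = f - \nabla p$, so interior $H^2$ control of $u$ and $H^1$ control of $p$ follow from Calder\'on--Zygmund estimates for $-\Delta$. On boundary patches one flattens $\p\Omega$ by a $C^2$ change of variables and takes tangential difference quotients, which control every second derivative of $u$ and first derivative of $p$ except the purely normal ones $\p_n^2 u$ and $\p_n p$; these last two are then recovered algebraically from the momentum equation and the constraint $\divv u = 0$. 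Summing the local estimates and absorbing lower-order terms -- using Poincar\'e and the energy identity $\|\nabla u\|_{L^2}^2 = (f,u)_{L^2}$ to bound $\|u\|_{L^2}$ by $\|f\|_{L^2}$ -- yields the global estimate. The genuinely delicate step, and the main obstacle in any self-contained treatment, is the boundary analysis: disentangling the $u$--$p$ coupling so as to recover the normal derivatives. This is exactly the content of the Agmon--Douglis--Nirenberg / Cattabriga theory, which is why we invoke \cite{CF} rather than reproving it.
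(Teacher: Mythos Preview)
Your proposal is correct. The paper itself does not supply a proof of this statement: it simply records the result as standard and refers the reader to \cite{CF}. Your write-up therefore goes beyond what the paper does, but in a way that is entirely compatible with it---you reduce the $H^2$ bound to the classical stationary Stokes regularity theory and ultimately defer to \cite{CF} for the boundary analysis, which is exactly the reference the paper cites. Your observation that the form-domain relation is in fact the identity $\|\calA^{1/2}u\|_{L^2} = \|\nabla u\|_{L^2}$ (so that one may take $c_0 = C_0 = 1$) is a nice sharpening of the statement as written.
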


We will now give a proof for Proposition \ref{prop:l2tolinfty}; its statement is reiterated below.
\begin{aprop}
Let $\Omega \subset \R^d$, $d = 2,3$, be a smooth, bounded domain. Assume $\rho, u$ to be the regular solution of problem \eqref{eq:ksstokes} on $[0,T]$ with initial condition $\rho_0 \in H_0^1$ and $\rho_0 \ge 0$. If there exists $M > 0$ such that $\sup_{0\le t \le T}\|\rho(t)\|_{L^2} \le M$, we then have
$$
\sup_{0\le t\le T}\|\rho(t)\|_{L^\infty} \le CM^{\frac{4}{4-d}},
$$
where $C$ is a constant only depending on domain $\Omega$.
\begin{proof}
In the proof, we shall suppress the variable $t$. Let $p \ge 1$ be an integer. We start with the following computation using \eqref{eq:ksstokes}:
\begin{align*}
    \frac{d}{dt}\|\rho\|_{L^{2p}}^{2p} = 2p\int_\Omega \rho^{2p-1}(-(u\cdot\nabla) \rho - \divv(\rho\nabla(-\Delta)^{-1}\rho) + \Delta\rho)\,dx = 2p(I+J+K).
\end{align*}
Using incompressibility of $u$, we can compute that
\begin{align*}
I = - \int_{\Omega}  \rho^{2p-1} (u\cdot\nabla) \rho \, dx = - \frac{1}{2p} \int_{\Omega} u_j \partial_j \rho^{2p} =0.
\end{align*}
Integrating by parts, we have
\begin{align*}
    J &= (2p-1)\int_\Omega \rho^{2p-1}\p_j\rho \p_j(-\Delta)^{-1}\rho dx= \frac{2p-1}{2p}\int_\Omega\p_j(\rho^{2p})\p_j(-\Delta)^{-1}\rho dx= \frac{2p-1}{2p}\int_\Omega \rho^{2p+1}dx
\end{align*}
Using chain rule, we also have
\begin{align*}
    K &= -(2p-1)\int_\Omega \rho^{2p-2}\p_j\rho\p_j\rho dx= -\frac{2p-1}{p^2}\int_\Omega |\nabla \rho^p|^2 dx.
\end{align*}
Collecting all computations above, we observe that
\begin{equation}
\label{Linftybd1}
\frac{d}{dt}\|\rho\|_{L^{2p}}^{2p} = (2p-1)\|\rho\|_{L^{2p+1}}^{2p+1} - \left(4-\frac{2}{p}\right)\|\nabla\rho^p\|_{L^2}^2.
\end{equation}
Now we shall estimate $\|\rho\|_{L^{2^n}}$ inductively on $n$. The base case $n=1$ is dealt with by our assumption. Assume for $t \in [0,T]$ we have the bound
$$
\|\rho\|_{L^{2^n}} \le B_n, B_n \ge 1
$$
for any $t \in [0,T]$. Define $f = \rho^{2^n}$, and apply $p = 2^n$ to \eqref{Linftybd1}, we obtain that
\begin{equation}
    \label{Linftydiff}
    \frac{d}{dt}\int_\Omega f^2 dx \le -2\|\nabla f\|_{L^2}^2 + 2^{n+1}\|f\|_{L^{2+2^{-n}}}^{2+2^{-n}}.
\end{equation}
Applying a Gagliardo-Nirenberg-Sobolev inequality (see \cite{AF}, for example), we can estimate using Young's inequality that
\begin{align}
\|f\|_{L^{2+2^{-n}}}^{2+2^{-n}} \lesssim \|\nabla f\|_{L^2}^{d2^{-n-1}}\|f\|_{L^2}^{2+2^{-n}-d2^{-n-1}} \le d2^{-n-2}\|\nabla f\|_{L^2}^2 + C\|f\|_{L^2}^{\frac{2+2^{-n} -d2^{-n-1}}{1-d 2^{-n-2}}}, \label{Linftybd2}\\
\|f\|_{L^2} \lesssim \|\nabla f\|_{L^2}^{\frac{d}{d+2}}\|f\|_{L^1}^{\frac{2}{d+2}}.\label{Linftybd3}
\end{align}
The constants in the above inequalities do not depend on $n.$
Plugging \eqref{Linftybd2}, \eqref{Linftybd3} to \eqref{Linftydiff}, we obtain
\begin{align}
    \frac{d}{dt}\int_\Omega f^2 dx &\le -2\|\nabla f\|_{L^2}^2 +\frac{d}{2} \|\nabla f\|_{L^2}^2 + C_2 2^{n+1}\|f\|_{L^2}^{\frac{2+2^{-n} -d2^{-n-1}}{1-d 2^{-n-2}}}\notag\\
    &\le -C_1\|f\|_{L^2}^{\frac{2d+4}{d}}\|f\|_{L^1}^{-\frac{4}{d}} + C_2 2^{n+1}\|f\|_{L^2}^{\frac{2+2^{-n} -d2^{-n-1}}{1-d 2^{-n-2}}}, \label{Linftydiff2}
\end{align}
where $C_1$, $C_2$ are constants only depending on $d$. Note that given $d = 2,3$, we have  $\frac{2+2^{-n} -d2^{-n-1}}{1-d 2^{-n-2}} < \frac{2d+4}{d}$ for $n \ge 1$. Moreover, observe that
$$
\|f\|_{L^1} \le B_n^{2^{n}} < \infty.
$$
Then for each $n \ge 1$, the right hand side of \eqref{Linftydiff2} becomes negative when $\|f\|_{L^2}$ is sufficiently large. In particular, one can compute that $\|\rho\|_{L^{2^{n+1}}}$ will never reach the value $B_{n+1}$, where
$B_{n+1}$ is defined by the following recursive relation:
$$
\log B_{n+1} = \frac{2^{n+2}-d}{2^{n+2}-2d}\log B_n + \frac{d}{2^{n}} \left[\log C + (n+1)\log 2\right],
$$
where $C$ is a constant independent of $n$. Note that we have
$$
\prod_{j=1}^n \frac{2^{n+2}-d}{2^{n+2}-2d} = \frac{4-d2^{-n}}{4-d} \to \frac{4}{4-d}
$$
as $n \to \infty$, where in the first equality we used the telescoping nature of the product. Then via an inductive argument, there exists some dimensional constant $C > 0$ such that for all $n \ge 1$,
$$
B_n \le CM^{\frac{4}{4-d}}.
$$
As $\Omega$ is bounded, we have
$$
\|\rho\|_{L^\infty} = \lim_{n \to \infty}\|\rho\|_{L^{2^n}} \le CM^{\frac{4}{4-d}},
$$
and the proof of the lemma is complete.
\end{proof}
\end{aprop}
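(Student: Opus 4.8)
The plan is to run a Moser-type iteration on the $L^{2^n}$ norms of $\rho$. The starting point is the differential identity obtained by multiplying the $\rho$-equation in \eqref{eq:ksstokes} by $\rho^{2p-1}$ and integrating over $\Omega$. First I would observe that the advection term drops out: using $\divv u = 0$ and integration by parts, $\int_\Omega \rho^{2p-1}(u\cdot\nabla)\rho\,dx = \frac{1}{2p}\int_\Omega u_j\p_j(\rho^{2p})\,dx = 0$. The chemotactic term, after integrating by parts and using $-\Delta(-\Delta)^{-1}\rho = \rho$, contributes the positive quantity $\frac{2p-1}{2p}\int_\Omega\rho^{2p+1}\,dx$, while the dissipation yields the coercive negative term $-(4-\tfrac{2}{p})\|\nabla\rho^p\|_{L^2}^2$. (Here the hypothesis $\rho_0\ge 0$ together with the parabolic maximum principle guarantees $\rho\ge 0$, so all odd powers are well-signed.) This produces the identity
\begin{equation*}
\frac{d}{dt}\|\rho\|_{L^{2p}}^{2p} = (2p-1)\|\rho\|_{L^{2p+1}}^{2p+1} - \Big(4-\frac{2}{p}\Big)\|\nabla\rho^p\|_{L^2}^2.
\end{equation*}

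Next, setting $p = 2^n$ and $f = \rho^{2^n}$, I would recast this as a differential inequality for $\|f\|_{L^2}^2$ whose only ``bad'' contribution is $2^{n+1}\|f\|_{L^{2+2^{-n}}}^{2+2^{-n}}$. The key point is that the exponent $2+2^{-n}$ sits strictly between $1$ and the Sobolev exponent, so a Gagliardo-Nirenberg-Sobolev interpolation followed by Young's inequality lets me split this term into a small multiple of $\|\nabla f\|_{L^2}^2$ (absorbed into the dissipation) plus a pure power of $\|f\|_{L^2}$. A second GNS inequality controls $\|f\|_{L^2}$ by $\|\nabla f\|_{L^2}^{d/(d+2)}\|f\|_{L^1}^{2/(d+2)}$, which converts the remaining dissipation into the strongly coercive term $-C_1\|f\|_{L^2}^{(2d+4)/d}\|f\|_{L^1}^{-4/d}$. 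Crucially, for $d=2,3$ and $n\ge 1$ the bad power $\frac{2+2^{-n}-d2^{-n-1}}{1-d2^{-n-2}}$ is strictly below $(2d+4)/d$, so the right-hand side becomes negative once $\|f\|_{L^2}$ is large; this yields an a priori ceiling on $\|f\|_{L^2}$, i.e. on $\|\rho\|_{L^{2^{n+1}}}$, depending only on $\|f\|_{L^1}=\|\rho\|_{L^{2^n}}^{2^n}$.

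I would then close the induction. Assuming $\|\rho\|_{L^{2^n}}\le B_n$ with $B_n\ge 1$, the ceiling above gives $\|\rho\|_{L^{2^{n+1}}}\le B_{n+1}$ with
\begin{equation*}
\log B_{n+1} = \frac{2^{n+2}-d}{2^{n+2}-2d}\log B_n + \frac{d}{2^n}\big[\log C + (n+1)\log 2\big].
\end{equation*}
The multiplicative factors telescope, $\prod_{j=1}^n \frac{2^{j+2}-d}{2^{j+2}-2d} = \frac{4-d2^{-n}}{4-d}\to\frac{4}{4-d}$, while the additive perturbations $\frac{d}{2^n}[\log C+(n+1)\log 2]$ form a summable series. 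Iterating from the base case $B_1=M$ furnished by the hypothesis then yields a uniform bound $B_n\le CM^{4/(4-d)}$, and since $\Omega$ is bounded, $\|\rho\|_{L^\infty}=\lim_{n\to\infty}\|\rho\|_{L^{2^n}}\le CM^{4/(4-d)}$.

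The main obstacle is the uniform-in-$n$ bookkeeping of constants through the recursion. I must verify both that the exponent gap $(2d+4)/d - \frac{2+2^{-n}-d2^{-n-1}}{1-d2^{-n-2}}$ stays bounded away from zero — so that the coercive term genuinely dominates and produces a finite ceiling at \emph{every} step rather than degenerating as $n\to\infty$ — and that the additive perturbations decay rapidly enough that the telescoped product times the summed perturbations converges. The limiting value $\frac{4}{4-d}$ of the exponent product is precisely what pins down the sharp power of $M$ in the conclusion, so this convergence computation, rather than any single estimate, is the crux of the argument.
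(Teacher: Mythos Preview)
Your proposal is correct and follows the paper's proof essentially step for step: the same $L^{2p}$ energy identity, the same Moser iteration with $p=2^n$ and $f=\rho^{2^n}$, the same pair of Gagliardo--Nirenberg interpolations producing the coercive term $-C_1\|f\|_{L^2}^{(2d+4)/d}\|f\|_{L^1}^{-4/d}$, and the same telescoping recursion for $\log B_{n+1}$ converging to the exponent $\tfrac{4}{4-d}$. Your explicit flagging of the uniform-in-$n$ exponent gap and the summability of the additive perturbations is exactly the bookkeeping the paper leaves implicit, so there is nothing to add.
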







\end{document}